\definecolor{myred}{HTML}{FF3D3D}
\definecolor{mycyan}{HTML}{0474BE}
\numberwithin{equation}{section}
\newcommand{\R}{\mathbb{R}}
\newcommand{\Z}{\mathbb{Z}}
\newcommand{\eps}{\varepsilon}
\newcommand{\norm}[1]{\left\lVert#1\right\rVert}
\newcommand{\abs}[1]{\left\lvert#1\right\rvert}
\newcommand{\sign}{\operatorname{sgn}}
\newcommand{\Dx}{{\Delta x}}
\newcommand{\Dt}{{\Delta t}}
\newcommand{\hf}{\unitfrac{1}{2}}
\newcommand{\cell}{\mathcal{C}}
\newcommand*\diff{\mathop{}\!\mathrm{d}}
\pgfplotsset{width=.36\textwidth,compat=1.12}
\pgfplotsset{every tick label/.append style={font=\footnotesize}}
\definecolor{skyblue1}{rgb}{0.447,0.624,0.812}
\definecolor{plum1}{rgb}{0.678,0.498,0.659}
\definecolor{scarletred1}{rgb}{0.937,0.161,0.161}
\definecolor{myblue}{HTML}{1e77b4}
\definecolor{myorange}{HTML}{ff7f0f}
\definecolor{mycolor}{rgb}{0.122, 0.435, 0.698}
\crefname{assumption}{assumption}{assumptions}
\Crefname{assumption}{Assumption}{Assumptions}
\newtheorem{theorem}{Theorem}[section]
\newtheorem{lemma}[theorem]{Lemma}
\newtheorem{definition}[theorem]{Definition}
\newtheorem{corollary}[theorem]{Corollary}
\newtheorem{assumption}[theorem]{Assumption}
\newtheorem{remark}[theorem]{Remark}
\newcounter{rmnum}
\newenvironment{romannum}
               {\begin{list}{{\upshape (\roman{rmnum})}}{\usecounter{rmnum}
                \setlength{\leftmargin}{0pt}
                \setlength{\itemindent}{42pt}}}{\end{list}}
\newcounter{muni}
\newenvironment{remunerate}
               {\begin{list}{{\upshape \arabic{muni}.}}{\usecounter{muni}
                \setlength{\leftmargin}{0pt}
                \setlength{\itemindent}{38pt}}}{\end{list}}
\title{Well-posedness of Bayesian Inverse Problems \\for Hyperbolic Conservation Laws}
\author{Siddhartha Mishra\footnote{Seminar for Applied Mathematics (SAM), ETH Z\"urich, R\"amistrasse 101, Z\"urich, Switzerland (\texttt{siddhartha.mishra@sam.math.ethz.ch} and \texttt{adrian.ruf@sam.math.ethz.ch})} , David Ochsner\footnote{Department of Mathematics, ETH Z\"urich, R\"amistrasse 101, Z\"urich, Switzerland (\texttt{david.ochsner@math.ethz.ch})} , Adrian\,M.~Ruf\footnotemark[1] , Franziska Weber\footnote{Department of Mathematical Sciences, Carnegie Mellon University, Pittsburgh, USA (\texttt{franzisw@andrew.cmu.edu})\newline
The research of Siddhartha Mishra and Adrian\,M.~Ruf is partially supported by the European Research Council Consolidator grant ERC-COG 770880 COMANFLO. Franziska Weber is partially supported by NSF DMS 1912854 and NSF OIA-DMR 2021019.}}
\begin{document}

\maketitle

\begin{abstract}
We study the well-posedness of the Bayesian inverse problem for scalar hyperbolic conservation laws where the statistical information about inputs such as the initial datum and (possibly discontinuous) flux function are inferred from noisy measurements. In particular, the Lipschitz continuity of the measurement to posterior map as well as the stability of the posterior to approximations, are established with respect to the Wasserstein distance. Numerical experiments are presented to illustrate the derived estimates.
\end{abstract}
\paragraph{Key words.} Inverse problem, Bayesian, Wasserstein distance, conservation laws
\paragraph{AMS subject classification.} 65M32, 65C50, 35L64
%
%
%
%
%
%
%
%


\section{Introduction}
Hyperbolic systems of conservation laws are a large class of nonlinear PDEs which model a wide variety of phenomena in the sciences and engineering. The generic form of these PDEs is given by~\cite{dafermos2010hyperbolic}, 
\begin{equation}
\label{eq:syscl}
\begin{aligned}
    w_t + \nabla_x\cdot f(w) =0,& &&(x,t)\in \R^d \times (0,T),\\
    w(x,0)=\bar{w}(x),& &&x\in \R^d.
\end{aligned}
\end{equation}
Here, the solution field $w\colon \R^d \times [0,T] \to \R^m$ is the vector of conserved variables, $f\colon\R^m \to \R^m$ is the so-called \emph{flux function} and $\bar{w}$ is the initial datum.

Prototypical examples of systems of conservation laws include the compressible Euler equations of fluid dynamics, the shallow-water equations of oceanography, the MHD equations of plasma physics and the equations of nonlinear elasticity. The simplest examples are the so-called \emph{scalar conservation laws} i.e., \eqref{eq:syscl} with $m=1$, with the well-known Burgers' equation being a prototype. 

It is well-known that solutions of even scalar conservation laws develop discontinuities, such as \emph{shock waves}, for smooth initial data. Thus, the solutions of hyperbolic conservation laws are sought in the sense of distributions. However, these \emph{weak solutions} are supplemented with additional admissibility criteria or \emph{entropy conditions} to recover uniqueness \cite{dafermos2010hyperbolic}. 

The most studied aspect of PDEs such as hyperbolic conservation laws is the so-called \emph{forward problem} i.e., given the inputs (initial datum and flux function) $u = (\bar{w},f)$, find the \emph{entropy solution} $w$ of \eqref{eq:syscl}. Often, one is not necessarily interested in the whole solution field $w$ of \eqref{eq:syscl}, but rather in \emph{observables} or \emph{quantities of interest} of the solution. Hence, the forward problem reduces to an evaluation of the mapping $\mathcal{G}$, 
\begin{equation*}
    \mathcal{G}\colon X \to Y,\, u \mapsto y=\mathcal{G}(u),
\end{equation*}
which maps inputs $u \in X$ into observables $\mathcal{G}(u)\in Y$ of the solution, with $X,Y$ being suitable Banach spaces. 

However in practice, the inputs $u$ (which correspond to the initial datum and flux function in the context of hyperbolic conservation laws \eqref{eq:syscl}) may not be known exactly. Rather, one has to \emph{infer} them from \emph{measurements} of the observables. Hence, one is often interested in the so-called \emph{Inverse problem}, which amounts to finding information about the inputs $u$, given \emph{noisy measurements} of the form;
\begin{equation*}
    y=\mathcal{G}(u)+\eta,
\end{equation*}
with $\eta$ being a random variable encoding measurement noise. 

It is well-known that the \emph{deterministic} version of the inverse problem may be \emph{ill-posed} \cite{stuart2010inverse}. Although \emph{regularization} procedures have been widely developed in the last few decades to address this ill-posedness of the deterministic inverse problem, it is a widely held view that \emph{statistical approaches} might be better suited in this context. 
A very popular statistical approach \cite{stuart2010inverse} models the prior knowledge about the inputs $u$ in terms of a \emph{prior probability measure} $\mu_0 \in {\rm Prob}(X)$. Then the famous \emph{Bayes' theorem} can be used to update our knowledge of the inputs $u$ (and consequently the solution $w$) in terms of a \emph{posterior measure} $\mu^y \in {\rm Prob}(X)$, conditioned on the noisy measurements $y \in Y$. The posterior measure is given by the following expression of its Radon-Nikodym derivative,
\begin{equation}
    \frac{\diff\mu^y}{\diff \mu_0} (u) = \frac{1}{Z(y)} \exp(-\Phi(u;y)), \quad  Z(y)=\int_X \exp(-\Phi(u;y))\diff\mu_0(u)
    \label{eqn: Radon--Nikodym derivative}
\end{equation}
Here, $\Phi$ is the \emph{log-likelihood} with respect to the measurements $y$. 

It is to be noted that the Bayesian formulation encodes a regularized version of the underlying deterministic problem as the latter is a \emph{maximum a posteriori} (MAP) estimator of the former, with a suitable choice of the prior \cite{stuart2010inverse}. 

The well-posedness of the Bayesian inverse problem refers to the rigorous demonstration of \emph{existence and uniqueness} of the posterior measure $\mu^y$, its continuous dependence and stability with respect to perturbations of the measurements $y$. Moreover, in practice, one \emph{approximates} the posterior computationally, for instance, by sampling from it with a Metropolis--Hastings-type Markov chain Monte Carlo (MCMC) algorithm. This in turn requires one to evaluate the likelihood in terms of \emph{numerical approximations} $\mathcal{G}^{\Delta}$ of the forward operator $\mathcal{G}$. Here, $\Delta$ is a numerical regularization parameter such as the mesh size or the time step. The accuracy of the resulting \emph{approximate posterior} $\mu^{y,\Delta}$ is also of great interest. 

The well-posedness of the Bayesian inverse problems has been studied extensively in recent years and is nicely summarized in \cite{stuart2010inverse}. It has been clearly established that the Bayesian inverse problem is well-posed as long as the forward map $\mathcal{G}$ is \emph{Lipschitz continuous}, with respect to suitable topologies. Even weaker assumptions on the forward map have been investigated recently in \cite{latz2020well,Sprungk2020}. 

Furthermore, these abstract assumptions on well-posedness have been verified and illustrated for a variety of elliptic, parabolic and linear hyperbolic PDEs, see \cite{stuart2010inverse} and references therein. The application of this theory to nonlinear hyperbolic PDEs, such as hyperbolic conservation laws \eqref{eq:syscl} is currently not available, except in \cite{herrmann2020deep} where the authors study an example of a scalar conservation law with uncertain flux. 

Given this context, our main goal in this paper is to study and establish well-posedness of the Bayesian inverse problem for hyperbolic conservation laws. We will focus on the scalar case ($m=1$ in \eqref{eq:syscl}) as no rigorous well-posedness results are available for the forward problem for systems of conservation laws, particularly in several space dimensions. 

To this end, we will also study the Lipschitz continuity of the posterior measure with respect to measurements in the \emph{Wasserstein distance} on probability measures. We observe that the standard framework for Bayesian inverse problems \cite{stuart2010inverse} uses the \emph{Hellinger distance} to investigate stability with respect to perturbations. However, the Wasserstein distance offers some advantages over the Hellinger distance. To illustrate this, consider two measures which are absolutely continuous with respect to a Gaussian reference measure (e.g., the prior measure $\mu_0$) and which are a distance $\eps$ apart in the Wasserstein distance. Then the difference between the means of the two measures is bounded by $\eps$ (cf. \Cref{rem: difference in moments}). In contrast, if the two measures are a distance $\eps$ apart in the Hellinger metric then the difference in the means is only bounded by $C\eps$ (see \cite[Lem.~6.37]{stuart2010inverse}) where the constant $C$ depends on the second moments and, in particular, can be arbitrarily large. Thus by bounding the Wasserstein distance, we can more effectively control the change in the posterior, caused either by perturbations of the measurement or by replacing the underlying forward map with a (numerical) approximation. 

We apply these abstract stability results to establish the well-posedness of the Bayesian inverse problem, for inferring initial data as well as flux functions of scalar conservation laws, from measurements. Moreover, we extend the results to cover the Bayesian inverse problem for a conservation law, corresponding to a flux function that can vary discontinuously in the space variable. Thus, we establish the first rigorous well-posedness results for Bayesian inverse problems for these nonlinear hyperbolic PDEs. 

The remainder of this paper is organized as follows: In \Cref{sec: well-posedness} we provide the general well-posedness theory in the spirit of \cite{stuart2010inverse}, but employing the Wasserstein distance instead of the Hellinger distance. \Cref{sec: approximation} contains general approximation results for posterior measures given by \eqref{eqn: Radon--Nikodym derivative}. We study inverse problems for multi-dimensional scalar conservation laws and one-dimensional scalar conservation laws with discontinuous flux in \Cref{sec: conservation laws}. Lastly, in \Cref{sec: numerical experiments} we present a series of numerical experiments illustrating, in particular, the convergence of the approximated posterior distribution under refinement of the finite-dimensional approximation.



\section{Well-posedness of general Bayesian inverse problems in the Wasserstein distance}\label{sec: well-posedness}

The probability measure of interest is defined through a density with respect to a prior reference measure $\mu_0$ which, by shift of origin, we take to have mean zero. Further, we assume that this prior measure is Gaussian with covariance operator $\mathcal{C}$. We write $\mu_0=\mathcal{N}(0,\mathcal{C})$.

\begin{assumption}\label{Assumption on phi}
    For some separable Banach space $X$ with $\mu_0(X)=1$, the function $\Phi\colon X\times Y\to\R$ satisfies the following:
    \begin{romannum}
        \item\label{item: Assumption on phi - lower bound} for every $\eps>0$ and $r>0$ there is $M=M(\eps,r)\in\R$ such that for all $u\in X$ and $y\in Y$ with $\norm{y}_Y<r$
        \begin{equation*}
            \Phi(u;y) \geq M - \eps\norm{u}_X^2;
        \end{equation*}
        \item\label{item: Assumption on phi - upper bound} for every $r>0$ there is a $K=K(r)>0$ such that for all $u\in X$ and $y\in Y$ with $\norm{u}_X,\norm{y}_Y<r$
        \begin{equation*}
            \Phi(u;y) \leq K;
        \end{equation*}
        \item\label{item: Assumption on phi - Lipschitz continuity in u} for every $r>0$ there exists $L=L(r)>0$ such that for all $u,u'\in X$ and $y\in Y$ with $\norm{u}_X,\norm{u'}_X,\norm{y}_Y<r$
        \begin{equation*}
            \abs{\Phi(u;y) - \Phi(u';y)} \leq L\norm{u-u'}_X.
        \end{equation*}
        \item\label{item: Assumption on phi - Lipschitz continuity in y} for all $\eps>0$ and $r>0$ there is $C=C(\eps,r)\in\R$ such that for all $y,y'\in Y$ with $\norm{y},\norm{y'}<r$ and for all $u\in X$
        \begin{equation*}
            \abs{\Phi(u;y) - \Phi(u;y')} \leq \exp\left(\eps\norm{u}_X^2+C\right)\norm{y-y'}_Y.
        \end{equation*}
    \end{romannum}
\end{assumption}

Note that \Cref{Assumption on phi}~\eqref{item: Assumption on phi - lower bound} and~\eqref{item: Assumption on phi - upper bound} will lead to bounds on the normalization constant $Z$ from above and below. \Cref{Assumption on phi}~\eqref{item: Assumption on phi - Lipschitz continuity in u} and~\eqref{item: Assumption on phi - Lipschitz continuity in y} are Lipschitz conditions in $u$ and $y$ respectively.

For Bayesian inverse problems in which a finite number of observations are made and the observation error $\eta$ is mean zero Gaussian with covariance matrix $\Gamma$, the potential $\Phi$ has the form
\begin{equation}
    \Phi(u;y) = \frac{1}{2} \abs{y-\mathcal{G}(u)}_\Gamma^2,
    \label{eqn: phi in the case of finitely many observations}
\end{equation}
where $y\in\R^m$ is the data, $\mathcal{G}\colon X\to\R^m$ is the observation operator, and $\abs{\cdot}_\Gamma$ is a covariance weighted norm on $\R^m$. In this case, we can translate \Cref{Assumption on phi} in terms of $\mathcal{G}$.
\begin{assumption}\label{Assumption on G}
    For some separable Banach space $X$ with $\mu_0(X)=1$, the function $\mathcal{G}\colon X\to\R^m$ satisfies the following:
    \begin{romannum}
        \item\label{item: Assumption on G - exponential bound} for every $\eps>0$ there is $M=M(\eps)\in\R$ such that for all $u\in X$
        \begin{equation*}
            \abs{\mathcal{G}(u)}_\Gamma \leq \exp\left(\eps\norm{u}_X^2 +M\right);
        \end{equation*}
        \item\label{item: Assumption on G - Lipshitz continuity in u} for every $r>0$ there is a $K=K(r)>0$ such that for all $u,u'\in X$ with $\norm{u}_X,\norm{u'}_X<r$
        \begin{equation*}
            \abs{\mathcal{G}(u)-\mathcal{G}(u')}_\Gamma \leq K\norm{u-u'}_X.
        \end{equation*}
    \end{romannum}
\end{assumption}
\begin{lemma}[{\cite[Lem.~2.8]{stuart2010inverse}}]\label{lem: Assumption 2 implies Assumption 1}
    Assume that $\mathcal{G}\colon X\to\R^m$ satisfies \Cref{Assumption on G} and that $\mu_0$ is a Gaussian measure with $\mu_0(X)=1$. Then $\Phi\colon X\times \R^m\to\R$ given by~\eqref{eqn: phi in the case of finitely many observations} satisfies \Cref{Assumption on phi} with $(y,\norm{\cdot}_Y) = (\R^m,\abs{\cdot}_\Gamma)$.
    In particular, if $\mathcal{G}$ satisfies \Cref{Assumption on G}~\eqref{item: Assumption on G - exponential bound} then $\Phi$ given by~\eqref{eqn: phi in the case of finitely many observations} satisfies \Cref{Assumption on phi}~\eqref{item: Assumption on phi - lower bound},\eqref{item: Assumption on phi - upper bound}, and \eqref{item: Assumption on phi - Lipschitz continuity in y}.
\end{lemma}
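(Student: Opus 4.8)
The plan is to verify the four conditions of \Cref{Assumption on phi} directly from the explicit form $\Phi(u;y)=\tfrac12\abs{y-\mathcal{G}(u)}_\Gamma^2$, taking $(Y,\norm{\cdot}_Y)=(\R^m,\abs{\cdot}_\Gamma)$. The only algebraic tool required is the polarization identity $\abs{a}_\Gamma^2-\abs{b}_\Gamma^2=\langle a+b, a-b\rangle_\Gamma$ together with the Cauchy--Schwarz inequality for the $\Gamma$-weighted inner product on $\R^m$; everything else is bookkeeping of constants. Note that neither the Gaussianity of $\mu_0$ nor $\mu_0(X)=1$ actually enters these pointwise estimates on $\Phi$ — those hypotheses are needed only for the well-posedness statements that build on this lemma.

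For \Cref{Assumption on phi}~\eqref{item: Assumption on phi - lower bound}, since $\Phi(u;y)\geq0$ the bound holds trivially with $M(\eps,r)=0$ for every $\eps,r>0$, using nothing about $\mathcal{G}$. For \eqref{item: Assumption on phi - upper bound}, on $\norm{u}_X,\norm{y}_Y<r$ I would estimate $\Phi(u;y)\leq\abs{y}_\Gamma^2+\abs{\mathcal{G}(u)}_\Gamma^2\leq r^2+\exp(2r^2+2M(1))$ by invoking \Cref{Assumption on G}~\eqref{item: Assumption on G - exponential bound} with the fixed choice $\eps=1$, which furnishes an admissible $K=K(r)$.

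For the Lipschitz condition \eqref{item: Assumption on phi - Lipschitz continuity in u} I would set $a=y-\mathcal{G}(u)$ and $b=y-\mathcal{G}(u')$, so that $a-b=\mathcal{G}(u')-\mathcal{G}(u)$ and $a+b=2y-\mathcal{G}(u)-\mathcal{G}(u')$; polarization and Cauchy--Schwarz give $\abs{\Phi(u;y)-\Phi(u';y)}\leq\tfrac12\abs{\mathcal{G}(u)-\mathcal{G}(u')}_\Gamma\bigl(2\abs{y}_\Gamma+\abs{\mathcal{G}(u)}_\Gamma+\abs{\mathcal{G}(u')}_\Gamma\bigr)$. On the ball of radius $r$ the first factor is $\leq K(r)\norm{u-u'}_X$ by \Cref{Assumption on G}~\eqref{item: Assumption on G - Lipshitz continuity in u}, and the parenthesis is $\leq 2r+2\exp(r^2+M(1))$ by the exponential bound, which yields an admissible $L=L(r)$. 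Condition \eqref{item: Assumption on phi - Lipschitz continuity in y} is analogous with $a=y-\mathcal{G}(u)$, $b=y'-\mathcal{G}(u)$, so $a-b=y-y'$ and $a+b=y+y'-2\mathcal{G}(u)$, giving $\abs{\Phi(u;y)-\Phi(u;y')}\leq\bigl(r+\exp(\eps\norm{u}_X^2+M(\eps))\bigr)\abs{y-y'}_\Gamma$, where it is crucial to invoke \Cref{Assumption on G}~\eqref{item: Assumption on G - exponential bound} with exactly the same $\eps$ that appears in \eqref{item: Assumption on phi - Lipschitz continuity in y}. The single step that is not purely mechanical is absorbing the additive $r$ into the exponential: since $\exp(\eps\norm{u}_X^2)\geq1$ one has $r+\exp(\eps\norm{u}_X^2+M(\eps))\leq\exp\bigl(\eps\norm{u}_X^2+C\bigr)$ with $C=C(\eps,r)=\log\bigl(r+e^{M(\eps)}\bigr)$, which is precisely the required form. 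This trade of a polynomial-in-$r$ prefactor for an additive constant inside the exponential is what makes the somewhat awkward exponential formulation of \Cref{Assumption on phi}~\eqref{item: Assumption on phi - Lipschitz continuity in y} natural in the first place; beyond it, there is no genuine obstacle in the argument.
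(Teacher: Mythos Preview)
Your proof is correct and follows essentially the same route as the paper's: nonnegativity for \eqref{item: Assumption on phi - lower bound}, the crude bound $\Phi\leq\abs{y}_\Gamma^2+\abs{\mathcal{G}(u)}_\Gamma^2$ combined with the exponential bound on $\mathcal{G}$ for \eqref{item: Assumption on phi - upper bound}, and polarization plus Cauchy--Schwarz for the two Lipschitz conditions. Your handling of constants is in fact tidier than the paper's (you make the absorption of the additive $r$ into the exponential in \eqref{item: Assumption on phi - Lipschitz continuity in y} explicit, and you correctly square the exponential bound when estimating $\abs{\mathcal{G}(u)}_\Gamma^2$), but there is no substantive difference in strategy.
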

\begin{proof}
    \Cref{Assumption on phi}~\eqref{item: Assumption on phi - lower bound} is trivially satisfied since $\Phi$ is nonnegative. Let now $r>0$, $u,u'\in X$ and $y,y'\in\R^m$ all with norm less than $r$. Using the exponential bound on $\mathcal{G}$ (with $\eps=1$) we find
    \begin{align*}
        \Phi(u;y) &\leq \abs{y}_\Gamma^2 + \abs{\mathcal{G}(u)}_\Gamma^2\\
        &\leq r^2 + \exp\left(\norm{u}_X^2+M\right)\\
        &\leq r^2 + \exp(r^2 + M)
    \end{align*}
    which gives \Cref{Assumption on phi}~\eqref{item: Assumption on phi - upper bound}. \Cref{Assumption on phi}~\eqref{item: Assumption on phi - Lipschitz continuity in u} follows from \Cref{Assumption on G}~\eqref{item: Assumption on G - Lipshitz continuity in u} because
    \begin{align*}
        \abs{\Phi(u;y)-\Phi(u',y)} \leq \frac{1}{2} &\abs{2y-\mathcal{G}(u)-\mathcal{G}(u')}_\Gamma \abs{\mathcal{G}(u)-\mathcal{G}(u')}_\Gamma\\
        &\leq C\left( |y| + N\norm{u-u'}_X \right) K\norm{u-u'}_X\\
        &\leq C\left( |y| + \norm{u}_X + \norm{u'}_X \right) \norm{u-u'}_X\\
        &\leq C r \norm{u-u'}_X.
    \end{align*}
    Lastly, for $\eps>0$, using the exponential bound on $\mathcal{G}$ we get
    \begin{align*}
        \abs{\Phi(u;y) - \Phi(u,y')} &\leq \frac{1}{2}\abs{y+y'-2 \mathcal{G}(u)}_\Gamma \abs{y-y'}_\Gamma\\
        &\leq C \left( \abs{y} + \abs{y'} + \exp\left( \eps\norm{u}_X^2 + M \right) \right) \abs{y-y'}\\
        &\leq \exp\left( \eps\norm{u}_X^2 +C(\eps,r) \right)\abs{y-y'}.
    \end{align*}
\end{proof}

The following theorem is due to Stuart and shows that $\mu^y$ given by~\eqref{eqn: Radon--Nikodym derivative} is a well-defined probability measure provided $\Phi$ satisfies a Lipschitz condition in $u$.

\begin{theorem}[{\cite[Thm.~4.1]{stuart2010inverse}}]\label{thm: Stuart existence of posterior measure}
    Let $\Phi$ satisfy \Cref{Assumption on phi}~\eqref{item: Assumption on phi - lower bound}, \eqref{item: Assumption on phi - upper bound}, and~\eqref{item: Assumption on phi - Lipschitz continuity in u} and assume that $\mu_0$ is a Gaussian measure satisfying $\mu_0(X)=1$.
    Then $\mu^y$ given by~\eqref{eqn: Radon--Nikodym derivative} is a well-defined probability measure on $X$.
\end{theorem}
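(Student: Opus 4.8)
The plan is to show that the normalisation constant $Z(y)$ in \eqref{eqn: Radon--Nikodym derivative} is both finite and strictly positive; once this is done, $Z(y)^{-1}\exp(-\Phi(u;y))$ is a nonnegative $\mu_0$-integrable density with total mass one, so $\mu^y$ is a genuine probability measure on $X$, absolutely continuous with respect to $\mu_0$. A preliminary point is measurability of the integrand: by \Cref{Assumption on phi}~\eqref{item: Assumption on phi - Lipschitz continuity in u}, $\Phi(\cdot;y)$ is Lipschitz, hence continuous, on every bounded subset of $X$, and since each point of $X$ has a bounded neighbourhood, $\Phi(\cdot;y)$ is continuous on all of $X$, in particular Borel measurable; thus $u\mapsto\exp(-\Phi(u;y))$ is a well-defined nonnegative measurable function.

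For the upper bound on $Z(y)$, I would fix $r>\norm{y}_Y$, pick $\eps>0$ small, and use \Cref{Assumption on phi}~\eqref{item: Assumption on phi - lower bound} to obtain $M=M(\eps,r)$ with $\Phi(u;y)\geq M-\eps\norm{u}_X^2$ for all $u\in X$, so that
\begin{equation*}
    Z(y) \leq e^{-M}\int_X \exp\!\left(\eps\norm{u}_X^2\right)\diff\mu_0(u).
\end{equation*}
The integral on the right is finite for $\eps$ sufficiently small by the Fernique theorem for Gaussian measures on separable Banach spaces, which yields exponential–quadratic integrability of the norm; this is exactly where the Gaussianity of $\mu_0$ and $\mu_0(X)=1$ are used. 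Hence $Z(y)<\infty$.

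For the lower bound, fix any $R>0$ and set $r=\max\{R,\norm{y}_Y\}$. By \Cref{Assumption on phi}~\eqref{item: Assumption on phi - upper bound} there is $K=K(r)>0$ with $\Phi(u;y)\leq K$ whenever $\norm{u}_X<R$, whence
\begin{equation*}
    Z(y) \geq \int_{\{\norm{u}_X<R\}}\exp(-\Phi(u;y))\diff\mu_0(u) \geq e^{-K}\,\mu_0\!\left(\{\norm{u}_X<R\}\right).
\end{equation*}
Since $\mu_0$ is a centred Gaussian measure on $X$, the origin lies in its (closed, linear) support, so every open ball about $0$ has strictly positive $\mu_0$-measure; therefore $Z(y)>0$.

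The main obstacle is the finiteness of $Z(y)$: it rests entirely on absorbing the quadratic lower bound $-\eps\norm{u}_X^2$ on $\Phi$ into the Gaussian tails of $\mu_0$ via Fernique's theorem, which forces the choice of $\eps$ small. The positivity step, by contrast, is soft, relying only on the fact that a centred Gaussian charges every neighbourhood of the origin. Combining the two estimates gives $0<Z(y)<\infty$, and then \eqref{eqn: Radon--Nikodym derivative} defines $\mu^y$ as a well-defined probability measure on $X$, completing the proof.
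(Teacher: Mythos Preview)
Your argument is correct and follows the standard route: measurability of $\Phi(\cdot;y)$ from local Lipschitz continuity, finiteness of $Z(y)$ from Assumption~\ref{Assumption on phi}\eqref{item: Assumption on phi - lower bound} combined with Fernique's theorem, and strict positivity of $Z(y)$ from Assumption~\ref{Assumption on phi}\eqref{item: Assumption on phi - upper bound} together with the fact that a centred Gaussian charges every ball about the origin. Note, however, that the paper does not supply its own proof of this theorem---it simply cites \cite[Thm.~4.1]{stuart2010inverse}---so there is no in-paper argument to compare against directly; that said, the lower-bound step you give is exactly the one the paper reproduces later in the proof of \Cref{thm: Well-posedness in W_1}, so your approach is fully aligned with both the cited source and the paper's internal reasoning.
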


We have the following immediate corollary for Bayesian inverse problems with a finite number of observations and $\Phi$ of the form~\eqref{eqn: phi in the case of finitely many observations}.
\begin{corollary}
    Assume that $\Phi\colon X\times Y\to\R$ is given by~\eqref{eqn: phi in the case of finitely many observations} and let $\mathcal{G}$ satisfy \Cref{Assumption on G}. Let further $\mu_0$ be a Gaussian measure satisfying $\mu_0(X)=1$. Then $\mu^y$ given by~\eqref{eqn: Radon--Nikodym derivative} is a well-defined probability measure on $X$.
\end{corollary}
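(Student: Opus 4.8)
The plan is to obtain the statement as a direct composition of \Cref{lem: Assumption 2 implies Assumption 1} and \Cref{thm: Stuart existence of posterior measure}, so that no new estimates are needed. First I would check that the hypotheses of \Cref{lem: Assumption 2 implies Assumption 1} are met: $\mathcal{G}$ satisfies \Cref{Assumption on G} by assumption, and $\mu_0$ is Gaussian with $\mu_0(X)=1$. The lemma then yields that the potential $\Phi$ defined by~\eqref{eqn: phi in the case of finitely many observations} satisfies \Cref{Assumption on phi} with the choice $(Y,\norm{\cdot}_Y)=(\R^m,\abs{\cdot}_\Gamma)$; in particular it satisfies parts~\eqref{item: Assumption on phi - lower bound}, \eqref{item: Assumption on phi - upper bound}, and~\eqref{item: Assumption on phi - Lipschitz continuity in u}.

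Next I would feed exactly these three properties, together with the Gaussianity of $\mu_0$ and $\mu_0(X)=1$, into \Cref{thm: Stuart existence of posterior measure}. Its conclusion is precisely that $\mu^y$ given by~\eqref{eqn: Radon--Nikodym derivative} is a well-defined probability measure on $X$, which is what we want. Concretely, parts~\eqref{item: Assumption on phi - lower bound} and~\eqref{item: Assumption on phi - upper bound} are what guarantee, inside the proof of that theorem, that the normalization constant $Z(y)=\int_X \exp(-\Phi(u;y))\diff\mu_0(u)$ is finite and strictly positive for every admissible $y$, so that~\eqref{eqn: Radon--Nikodym derivative} indeed defines a probability measure that is absolutely continuous with respect to $\mu_0$; part~\eqref{item: Assumption on phi - Lipschitz continuity in u} (together with the continuity of $\Phi(\cdot;y)$ it entails) ensures measurability of the density.

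Since both ingredients are already available, there is no genuine obstacle here; the only point requiring a moment's care is matching the data space, as \Cref{Assumption on phi} is stated for a general Banach space $Y$ whereas in the finite-observation setting the relevant choice is $Y=\R^m$ equipped with the covariance-weighted norm $\abs{\cdot}_\Gamma$. One should check that the $\norm{\cdot}_Y$-balls appearing in the hypotheses of \Cref{thm: Stuart existence of posterior measure} correspond to $\abs{\cdot}_\Gamma$-balls, but this is immediate and is exactly the correspondence already recorded in \Cref{lem: Assumption 2 implies Assumption 1}. Hence the corollary follows with a one-line proof chaining the lemma and the theorem.
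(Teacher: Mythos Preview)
Your proposal is correct and matches the paper's approach exactly: the paper states this result as an ``immediate corollary'' of \Cref{thm: Stuart existence of posterior measure} without writing out a proof, and your chaining of \Cref{lem: Assumption 2 implies Assumption 1} with \Cref{thm: Stuart existence of posterior measure} is precisely the intended one-line argument.
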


\subsection{Well-posedness in the Wasserstein distance}

The $1$-Wasserstein distance between two probability measures $\mu$ and $\mu'$ with finite first moments
\begin{equation*}
    \int_X \|u\|_X\diff\mu(u),\int_X \|u\|_X\diff\mu'(u)<\infty
\end{equation*}
is defined as
\begin{equation*}
    W_1(\mu,\mu') = \sup_{\stackrel{\psi\in\mathcal{C}_b(X)}{\norm{\psi}_{\text{Lip}}\leq 1}} \int_X \psi(u) \diff(\mu-\mu')(u),
\end{equation*}
see \cite{Villani}.
Note that by the Fernique Theorem all moments of $u$ in $X$ are finite under a Gaussian measure (cf. \Cref{thm: Fernique}).
\begin{remark}\label{rem: difference in moments}
    The difference between the first moments of two probability measures $\mu$ and $\mu'$ is bounded by the Wasserstein distance between those measures:
    \begin{align*}
        \left\|\int_X u\diff\mu(u)-\int_X u\diff\mu'(u)\right\|_X = \left\|\int_X u\diff(\mu-\mu')(u)\right\|_X \leq \int_X\|u\|_X\diff(\mu-\mu')(u) \leq W_1(\mu,\mu').
    \end{align*}
\end{remark}

We show that the posterior measure $\mu^y$ is Lipschitz continuous with respect to the data $y$ in the $1$-Wasserstein distance. This constitutes a well-posedness result for the posterior measure. The result, and proof, is similar to that in~\cite{stuart2010inverse} concerning well-posedness in the Hellinger distance.

\begin{theorem}[{Well-posedness in $W_1$}]\label{thm: Well-posedness in W_1}
    Let $\Phi$ satisfy \Cref{Assumption on phi}~\eqref{item: Assumption on phi - lower bound}, \eqref{item: Assumption on phi - upper bound}, and~\eqref{item: Assumption on phi - Lipschitz continuity in y}. Assume also that $\mu_0$ is a Gaussian measure satisfying $\mu_0(X)=1$ and that for all $y\in Y$ the measure $\mu^y$ is absolutely continuous with respect to $\mu_0$, $\mu^y\ll\mu_0$, with Randon--Nikod\'{y}m derivative given by~\eqref{eqn: Radon--Nikodym derivative}. Then $y\mapsto\mu^y$ is Lipschitz continuous with respect to the $1$-Wasserstein distance: if $\mu^y$ and $\mu^{y'}$ are two measures corresponding to data $y$ and $y'$ then for all $r>0$ there exists $C=C(r)>0$ such that, if $\norm{y}_Y,\norm{y'}_Y<r$, then
    \begin{equation*}
        W_1(\mu^y,\mu^{y'}) \leq C\norm{y-y'}_Y.
    \end{equation*}
\end{theorem}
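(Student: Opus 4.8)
The plan is to work directly from the Kantorovich--Rubinstein formula that defines $W_1$ and to bound $\int_X \psi\,\diff(\mu^y-\mu^{y'})$ uniformly over all admissible test functions $\psi$. Fix $r>0$ and $y,y'\in Y$ with $\norm{y}_Y,\norm{y'}_Y<r$. First I would normalize the test function: since $\mu^y$ and $\mu^{y'}$ are probability measures, replacing $\psi$ by $\psi-\psi(0)$ does not change the integral, so we may assume $\psi(0)=0$, and then $\abs{\psi(u)}\le\norm{u}_X$ because $\norm{\psi}_{\text{Lip}}\le1$. Writing the densities against $\mu_0$ and adding and subtracting, I split
\[
    \int_X \psi\,\diff(\mu^y-\mu^{y'}) = I_1 + I_2,
\]
where
\[
    I_1 = \frac{1}{Z(y)}\int_X \psi(u)\bigl(e^{-\Phi(u;y)}-e^{-\Phi(u;y')}\bigr)\diff\mu_0(u), \qquad I_2 = \Bigl(\frac{1}{Z(y)}-\frac{1}{Z(y')}\Bigr)\int_X \psi(u)e^{-\Phi(u;y')}\diff\mu_0(u).
\]

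For both terms the workhorse is the elementary inequality $\abs{e^{-a}-e^{-b}}\le e^{-\min(a,b)}\abs{a-b}$. Applying \Cref{Assumption on phi}~\eqref{item: Assumption on phi - lower bound} to $\min(\Phi(u;y),\Phi(u;y'))$ gives $e^{-\min(\Phi(u;y),\Phi(u;y'))}\le e^{-M}e^{\eps\norm{u}_X^2}$, while \Cref{Assumption on phi}~\eqref{item: Assumption on phi - Lipschitz continuity in y} bounds $\abs{\Phi(u;y)-\Phi(u;y')}$ by $\exp(\eps\norm{u}_X^2+C)\norm{y-y'}_Y$; multiplying these and using $\abs{\psi(u)}\le\norm{u}_X$, the integrand of $I_1$ is at most $\norm{u}_X\, e^{-M+C} e^{2\eps\norm{u}_X^2}\norm{y-y'}_Y$. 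By the Fernique theorem (\Cref{thm: Fernique}) there is $\alpha>0$ with $\int_X e^{\alpha\norm{u}_X^2}\diff\mu_0<\infty$, so \emph{choosing $\eps$ small enough} (say $2\eps<\alpha$) makes $\int_X \norm{u}_X e^{2\eps\norm{u}_X^2}\diff\mu_0$ finite. Combined with the lower bound $Z(y)\ge e^{-K(\rho)}\mu_0(\{\norm{u}_X<\rho\})>0$ for a suitable $\rho\ge r$, which follows from \Cref{Assumption on phi}~\eqref{item: Assumption on phi - upper bound} together with the positivity of the Gaussian $\mu_0$ on balls, this yields $\abs{I_1}\le C(r)\norm{y-y'}_Y$.

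For $I_2$ I would estimate $\abs{Z(y)^{-1}-Z(y')^{-1}} = \abs{Z(y')-Z(y)}/(Z(y)Z(y'))$. The numerator is bounded exactly as $I_1$ but without the extra factor $\norm{u}_X$, giving $\abs{Z(y)-Z(y')}\le C(r)\norm{y-y'}_Y$; the denominators are bounded below as above; and the remaining integral is finite because $\abs{\psi(u)}e^{-\Phi(u;y')}\le\norm{u}_X e^{-M} e^{\eps\norm{u}_X^2}$ and Fernique applies. Hence $\abs{I_2}\le C(r)\norm{y-y'}_Y$ as well. Adding the two bounds and taking the supremum over all $\psi\in\mathcal{C}_b(X)$ with $\norm{\psi}_{\text{Lip}}\le1$ gives $W_1(\mu^y,\mu^{y'})\le C(r)\norm{y-y'}_Y$. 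The main obstacle is the bookkeeping of the free parameter $\eps$ in \Cref{Assumption on phi}: one must commit to a single $\eps$ that is small relative to the Fernique exponent $\alpha$ \emph{before} invoking \eqref{item: Assumption on phi - lower bound} and \eqref{item: Assumption on phi - Lipschitz continuity in y}, so that every Gaussian integral that appears (with exponents up to $2\eps\norm{u}_X^2$) converges; the positivity-on-balls lower bound for $Z$, and the initial reduction to $\psi$ with $\psi(0)=0$, are the other points that require care.
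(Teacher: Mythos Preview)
Your proposal is correct and follows essentially the same route as the paper's proof: the same reduction to $\psi(0)=0$, the same $I_1+I_2$ splitting, the same use of $\abs{e^{-a}-e^{-b}}\le e^{-\min(a,b)}\abs{a-b}$ together with \Cref{Assumption on phi}~\eqref{item: Assumption on phi - lower bound}, \eqref{item: Assumption on phi - Lipschitz continuity in y}, the lower bound on $Z$ from \eqref{item: Assumption on phi - upper bound}, and Fernique. Your explicit remark that $\eps$ must be fixed small relative to the Fernique exponent $\alpha$ before invoking the assumptions is a point the paper leaves implicit; otherwise the arguments coincide.
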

\begin{proof}
    In the following, we will write $Z$ and $Z'$ for $Z(y)$ and $Z(y')$ respectively (where $Z$ is defined in~\eqref{eqn: Radon--Nikodym derivative}).
    From \Cref{Assumption on phi}~\eqref{item: Assumption on phi - upper bound} we get for any $r>0$ and $\norm{y}_Y<r$
    \begin{equation*}
        \abs{Z} \geq \int_{\{\norm{u}_X<r\}}\exp(-L)\diff\mu_0(u) \geq \exp(-L)\mu_0(\{\norm{u}_X<r\}).
    \end{equation*}
    This lower bound is positive since $\mu_0$ has full measure on $X$ and is Gaussian so that all balls in $X$ have positive probability. We have an analogous lower bound for $\abs{Z'}$.

    Using the estimate
    \begin{equation}
        \abs{\exp(a) - \exp(b)} \leq (\exp(a)\vee \exp(b)) |a-b|,
        \label{eqn: exp estimate}
    \end{equation}
    \Cref{Assumption on phi}~\eqref{item: Assumption on phi - lower bound}, \eqref{item: Assumption on phi - Lipschitz continuity in y} and the fact that $\mu_0$ is a Gaussian measure so that the Fernique \Cref{thm: Fernique} applies, we find for $\norm{y}_Y,\norm{y'}_Y<r$
    \begin{align*}
        \abs{Z-Z'} &\leq \int_X \left(\exp(-\Phi(u;y))\vee\exp(-\Phi(u;y'))\right)\abs{\Phi(u;y) - \Phi(u;y')} \diff\mu_0(u)\\
        &\leq C \int_X \exp\left(\eps\norm{u}_X^2-M\right)\exp\left(\eps\norm{u}_X^2 + C\right)\norm{y-y'}_Y \diff\mu_0(u)\\
        &= C\int_X \exp\left(2\eps\norm{u}_X^2\right)\diff\mu_0(u) \norm{y-y'}_Y\\
        &= C \norm{y-y'}_Y.
    \end{align*}

    Now, let $\psi\in\mathcal{C}_b(X)$ with $\norm{\psi}_{\text{Lip}}\leq 1$. Since $\mu^y$ and $\mu^{y'}$ are probability measures, we have
    \begin{align*}
        \int_X \psi(u)\diff(\mu^y -\mu^{y'})(u) &= \int_X (\psi(u)-\psi(0)) \diff(\mu^y -\mu^{y'})(u) + \int_X \psi(0)\diff(\mu^y -\mu^{y'})(u)\\
        &= \int_X (\psi(u) -\psi(0)) \diff(\mu^y -\mu^{y'})(u)\\
        &= \int_X (\psi(u) - \psi(0)) \left( \frac{\diff\mu^y}{\diff\mu_0}(u) - \frac{\diff\mu^{y'}}{\diff\mu_0}(u) \right) \diff\mu_0(u)\\
        &= \int_X (\psi(u) - \psi(0)) \left( Z^{-1}\exp(-\Phi(u;y)) - (Z')^{-1}\exp(-\Phi(u;y')) \right) \diff\mu_0(u)\\
        &= I_1 + I_2
    \end{align*}
    where
    \begin{align*}
        I_1 &= \int_X (\psi(u) - \psi(0)) Z^{-1} \left( \exp(-\Phi(u;y)) - \exp(-\Phi(u;y')) \right) \diff\mu_0(u),\\
        I_2 &= \int_X (\psi(u) - \psi(0)) \left(Z^{-1} - (Z')^{-1}\right) \exp(-\Phi(u;y'))\diff\mu_0(u) 
    \end{align*}
    Using again the estimate~\eqref{eqn: exp estimate},
    the fact that $\norm{\psi}_{\text{Lip}}\leq 1$, \Cref{Assumption on phi}~\eqref{item: Assumption on phi - lower bound}, and~\eqref{item: Assumption on phi - Lipschitz continuity in y} we obtain
    \begin{align*}
        Z I_1 &\leq \int_X \abs{\psi(u)-\psi(0)} \abs{\exp(-\Phi(u;y)) - \exp(-\Phi(u;y'))} \diff\mu_0(u)\\
        &\leq \int_X \norm{u}_X ( \exp(-\Phi(u;y)) \vee \exp(-\Phi(u;y')) )\abs{\Phi(u;y)) - \Phi(u;y')} \diff\mu_0(u)\\
        &\leq C \left(\int_X \norm{u}_X \exp\left(2\eps\norm{u}_X^2\right) \diff\mu_0(u)\right) \norm{y-y'}_Y.
    \end{align*}
    Since all moments of $u$ in $X$ are finite under the Gaussian measure $\mu_0$ by the Fernique Theorem, the integral in the last line can be bounded by using the Cauchy--Schwarz inequality and again the Fernique Theorem. Since $Z$ is bounded from below by a positive constant, this gives a bound on $I_1$.

    Using the fact that $\norm{\psi}_{\text{Lip}}\leq 1$, \Cref{Assumption on phi}~\eqref{item: Assumption on phi - lower bound}, the above bound on $\abs{Z-Z'}$ and again the fact that $Z$ and $Z'$ are bounded from below by a positive constant, we get
    \begin{align*}
        I_2 \leq &\int_X \norm{u}_X \abs{Z^{-1} - (Z')^{-1}} \exp(-\Phi(u;y')) \diff\mu_0(u)\\
        &\leq C \abs{Z^{-1}-(Z')^{-1}} \int_X \norm{u}_X \exp(\eps\norm{u}_X^2)\diff\mu_0(u)\\
        &\leq C\left( Z^{-2} \vee (Z')^{{-2}} \right) |Z-Z'|\\
        &\leq C \norm{y-y'}_Y.
    \end{align*}
    Here we used the same arguments as before to bound the integral $\int_X\norm{u}_X\exp\left(\eps\norm{u}_X^2\right)\diff\mu_0(u)$.
    Combining the bounds for $I_1$ and $I_2$ gives the desired continuity result in the Wasserstein distance.
\end{proof}

\begin{remark}
    In the proof of \Cref{thm: Well-posedness in W_1} we only use the assumption that $\mu_0$ is Gaussian to deduce that there exists $\alpha>0$ such that $\int_X \exp(\alpha\norm{u}_X^2)\diff\mu_0(u)<\infty$. Therefore, the statement of \Cref{thm: Well-posedness in W_1} readily extends to any prior measure $\mu_0$ with this property.
\end{remark}

For Bayesian inverse problems with finite data the potential has the form~\eqref{eqn: phi in the case of finitely many observations} where $y\in\R^m$ is the data $\mathcal{G}\colon X\to\R^m$ is the observation operator, and $\abs{\cdot}_\Gamma$ is a covariance weighted norm on $\R^m$.
By \Cref{lem: Assumption 2 implies Assumption 1} we know that \Cref{Assumption on G} implies \Cref{Assumption on phi} for $\Phi$ given by~\eqref{eqn: phi in the case of finitely many observations}. Thus, we have the following corollary of \Cref{thm: Well-posedness in W_1}.
\begin{corollary}\label{cor: finite data well-posedness}
    Assume that $\Phi\colon X\times Y\to \R$ is given by~\eqref{eqn: phi in the case of finitely many observations} and let $\mathcal{G}$ satisfy \Cref{Assumption on G}~\eqref{item: Assumption on G - exponential bound}. Assume further that $\mu_0$ is a Gaussian measure satisfying $\mu_0(X)=1$ and that for all $y\in Y$ the measure $\mu^y$ is absolutely continuous with respect to $\mu_0$, $\mu^y\ll \mu_0$, with Randon--Nikod\'{y}m derivative given by~\eqref{eqn: Radon--Nikodym derivative}. Then $y\mapsto\mu^y$ is Lipschitz continuous with respect to the $1$-Wasserstein distance: if $\mu^y$ and $\mu^{y'}$ are two measures corresponding to data $y$ and $y'$ then for all $r>0$ there exists $C=C(r)>0$ such that, if $\norm{y}_Y,\norm{y'}_Y<r$, then
    \begin{equation*}
        W_1(\mu^y,\mu^{y'}) \leq C\norm{y-y'}_Y
    \end{equation*}
\end{corollary}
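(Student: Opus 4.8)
The plan is to obtain this as an immediate consequence of \Cref{thm: Well-posedness in W_1} combined with the reduction lemma \Cref{lem: Assumption 2 implies Assumption 1}; no new estimates are needed. First I would invoke \Cref{lem: Assumption 2 implies Assumption 1}: since $\mathcal{G}$ satisfies \Cref{Assumption on G}~\eqref{item: Assumption on G - exponential bound}, the ``in particular'' clause of that lemma tells us that the potential $\Phi$ defined by~\eqref{eqn: phi in the case of finitely many observations} satisfies \Cref{Assumption on phi}~\eqref{item: Assumption on phi - lower bound}, \eqref{item: Assumption on phi - upper bound}, and~\eqref{item: Assumption on phi - Lipschitz continuity in y}, with $(Y,\norm{\cdot}_Y) = (\R^m,\abs{\cdot}_\Gamma)$.

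Next I would observe that these three conditions are exactly the hypotheses on $\Phi$ required by \Cref{thm: Well-posedness in W_1} — note in particular that the Lipschitz-in-$u$ condition \Cref{Assumption on phi}~\eqref{item: Assumption on phi - Lipschitz continuity in u} (equivalently \Cref{Assumption on G}~\eqref{item: Assumption on G - Lipshitz continuity in u}) is \emph{not} needed for the Wasserstein well-posedness result, only for the existence result \Cref{thm: Stuart existence of posterior measure}. The remaining hypotheses of \Cref{thm: Well-posedness in W_1}, namely that $\mu_0$ is Gaussian with $\mu_0(X)=1$ and that $\mu^y\ll\mu_0$ for all $y$ with Radon--Nikod\'ym derivative given by~\eqref{eqn: Radon--Nikodym derivative}, are part of the standing assumptions of the corollary. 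Applying \Cref{thm: Well-posedness in W_1} then produces, for every $r>0$, a constant $C=C(r)>0$ such that $W_1(\mu^y,\mu^{y'})\leq C\norm{y-y'}_Y = C\abs{y-y'}_\Gamma$ whenever $\abs{y}_\Gamma,\abs{y'}_\Gamma<r$, which is the claim.

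There is essentially no obstacle here; the only point worth a moment's care is confirming that the exponential growth bound \Cref{Assumption on G}~\eqref{item: Assumption on G - exponential bound} alone suffices, i.e.\ that the Lipschitz bound on $\mathcal{G}$ is not implicitly used in deriving items \eqref{item: Assumption on phi - lower bound}, \eqref{item: Assumption on phi - upper bound}, \eqref{item: Assumption on phi - Lipschitz continuity in y} of \Cref{Assumption on phi}. Inspecting the proof of \Cref{lem: Assumption 2 implies Assumption 1}, one sees that \eqref{item: Assumption on phi - lower bound} is just nonnegativity of $\Phi$, while the upper bound \eqref{item: Assumption on phi - upper bound} and the Lipschitz-in-$y$ estimate \eqref{item: Assumption on phi - Lipschitz continuity in y} use only $\abs{y}_\Gamma<r$ together with the exponential bound on $\abs{\mathcal{G}(u)}_\Gamma$; the Lipschitz condition on $\mathcal{G}$ enters only in the derivation of \eqref{item: Assumption on phi - Lipschitz continuity in u}, which we do not invoke. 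Hence the corollary follows directly.
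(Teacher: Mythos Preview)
Your proposal is correct and matches the paper's approach exactly: the paper presents this corollary without a separate proof, noting just before its statement that \Cref{lem: Assumption 2 implies Assumption 1} reduces \Cref{Assumption on G} to \Cref{Assumption on phi}, so that \Cref{thm: Well-posedness in W_1} applies directly. Your additional care in checking that only \Cref{Assumption on G}~\eqref{item: Assumption on G - exponential bound} is needed (and not the Lipschitz condition on $\mathcal{G}$) is a useful clarification that the paper leaves implicit.
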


\section{Approximation of posterior measures in the Wasserstein distance}\label{sec: approximation}
In order to implement algorithms designed to sample the posterior measure $\mu^y$, we need to make finite-dimensional approximations. Since the dependence on $y$ is not relevant in this section, we suppress it notationally and study measures $\mu$ given by
\begin{equation}
    \frac{\diff\mu}{\diff\mu_0}(u) = \frac{1}{Z}\exp(-\Phi(u))
    \label{eqn: RN derivative of mu}
\end{equation}
where the normalization constant $Z$ is given by
\begin{equation*}
    Z=\int_X\exp(-\Phi(u))\diff\mu_0(u).
\end{equation*}
We approximate $\mu$ by approximating $\Phi$ over some $N$-dimensional subspace of $X$. Specifically, we define $\mu^N$ by
\begin{equation}
    \frac{\diff\mu^N}{\diff\mu_0}(u) = \frac{1}{Z^N}\exp\left(-\Phi^N(u)\right)
    \label{eqn: RN derivative of mu^N}
\end{equation}
where
\begin{equation*}
    Z^N=\int_X\exp\left(-\Phi^N(u)\right)\diff\mu_0(u).
\end{equation*}
The following theorem bounds the $1$-Wasserstein distance between $\mu$ and $\mu^N$ in terms of the error in approximating $\Phi$. Note that this effectively translates approximation results for $\Phi$---which are determined by the forward problem---into approximation results for the posterior $\mu$.

\begin{theorem}\label{thm: Approximation of measures in W1}
    Assume that the measures $\mu$ and $\mu^N$ are both absolutely continuous with respect to $\mu_0$, satisfying $\mu_0(X)=1$, with Randon--Nikod\'{y}m derivative given by~\eqref{eqn: RN derivative of mu} and~\eqref{eqn: RN derivative of mu^N} and that $\Phi$ and $\Phi^N$ satisfy \Cref{Assumption on phi}~\eqref{item: Assumption on phi - lower bound} and~\eqref{item: Assumption on phi - upper bound} with constants uniform in $N$. Assume also that for any $\eps>0$ there is $K=K(\eps)>0$ such that
    \begin{equation}
        \abs{\Phi(u)-\Phi^N(u)} \leq K\exp\left(\eps\norm{u}_X^2\right) \Psi(N),
        \label{eqn: bound for the approximation error in phi}
    \end{equation}
    where $\Psi(N)\to 0$ as $N\to\infty$. Then the measures $\mu$ and $\mu^N$ are close with respect to the $1$-Wasserstein distance: there is a constant $C$, independent of $N$, such that
    \begin{equation*}
        W_1(\mu,\mu^N)\leq C \Psi(N).
    \end{equation*}
\end{theorem}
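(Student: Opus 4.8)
The plan is to adapt the proof of \Cref{thm: Well-posedness in W_1} almost verbatim, replacing the pair $\bigl(\Phi(\cdot;y),\Phi(\cdot;y')\bigr)$ by $(\Phi,\Phi^N)$ and the data-Lipschitz estimate of \Cref{Assumption on phi}~\eqref{item: Assumption on phi - Lipschitz continuity in y} by the approximation bound~\eqref{eqn: bound for the approximation error in phi}. The one structural feature requiring constant attention is that every constant occurring in the estimates must be independent of $N$; this is guaranteed by the hypothesis that $\Phi$ and $\Phi^N$ satisfy \Cref{Assumption on phi}~\eqref{item: Assumption on phi - lower bound} and~\eqref{item: Assumption on phi - upper bound} with $N$-uniform constants. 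Throughout I would fix $\eps>0$ small enough that $2\eps$ lies below the Fernique exponent of $\mu_0$, so that the Gaussian integrals $\int_X\exp\bigl(2\eps\norm{u}_X^2\bigr)\diff\mu_0(u)$ and $\int_X\norm{u}_X\exp\bigl(2\eps\norm{u}_X^2\bigr)\diff\mu_0(u)$ are finite by \Cref{thm: Fernique}; the freedom to do so comes from the fact that both \Cref{Assumption on phi}~\eqref{item: Assumption on phi - lower bound} and the bound~\eqref{eqn: bound for the approximation error in phi} hold for arbitrary $\eps>0$.

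First I would record the lower bounds on the normalization constants. Exactly as in \Cref{thm: Well-posedness in W_1}, \Cref{Assumption on phi}~\eqref{item: Assumption on phi - upper bound} gives, for a fixed $r>0$,
\begin{equation*}
    Z \geq \exp(-K)\,\mu_0\bigl(\{\norm{u}_X<r\}\bigr) > 0,
\end{equation*}
and the identical bound for $Z^N$ with the \emph{same} $K$ by uniformity in $N$; positivity uses that $\mu_0$ is Gaussian, so every ball in $X$ has positive mass. Next I would bound $\abs{Z-Z^N}$: applying the elementary inequality~\eqref{eqn: exp estimate} with $a=-\Phi(u)$, $b=-\Phi^N(u)$, the lower bound of \Cref{Assumption on phi}~\eqref{item: Assumption on phi - lower bound} to both $\Phi$ and $\Phi^N$ (with exponent $\eps$ and the common constant $M$), and then~\eqref{eqn: bound for the approximation error in phi}, yields
\begin{align*}
    \abs{Z-Z^N}
    &\leq \int_X \bigl(\exp(-\Phi(u))\vee\exp(-\Phi^N(u))\bigr)\abs{\Phi(u)-\Phi^N(u)}\diff\mu_0(u)\\
    &\leq C\,\Psi(N)\int_X \exp\bigl(2\eps\norm{u}_X^2\bigr)\diff\mu_0(u)
    = C\,\Psi(N).
\end{align*}

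Then, for any $\psi\in\mathcal{C}_b(X)$ with $\norm{\psi}_{\text{Lip}}\leq 1$, using that $\mu$ and $\mu^N$ are probability measures so that the constant $\psi(0)$ integrates to zero against $\mu-\mu^N$, I would write
\begin{equation*}
    \int_X \psi\,\diff(\mu-\mu^N)
    = \int_X (\psi(u)-\psi(0))\Bigl(Z^{-1}\exp(-\Phi(u))-(Z^N)^{-1}\exp(-\Phi^N(u))\Bigr)\diff\mu_0(u)
    = I_1+I_2,
\end{equation*}
with $I_1$ collecting the difference of the exponentials at the fixed prefactor $Z^{-1}$ and $I_2$ collecting the difference $Z^{-1}-(Z^N)^{-1}$, exactly as in \Cref{thm: Well-posedness in W_1}. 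For $I_1$ I would bound $\abs{\psi(u)-\psi(0)}\leq\norm{u}_X$, apply~\eqref{eqn: exp estimate}, \Cref{Assumption on phi}~\eqref{item: Assumption on phi - lower bound} and~\eqref{eqn: bound for the approximation error in phi}, and then control $\int_X\norm{u}_X\exp\bigl(2\eps\norm{u}_X^2\bigr)\diff\mu_0(u)$ by Cauchy--Schwarz and \Cref{thm: Fernique}; since $Z^{-1}$ is bounded above uniformly in $N$, this gives $\abs{I_1}\leq C\Psi(N)$. For $I_2$ I would use $\abs{Z^{-1}-(Z^N)^{-1}}\leq\bigl(Z^{-2}\vee(Z^N)^{-2}\bigr)\abs{Z-Z^N}\leq C\Psi(N)$ from the previous step together with the uniform lower bounds on $Z$ and $Z^N$, bound $\abs{\psi(u)-\psi(0)}\leq\norm{u}_X$ once more, and control $\int_X\norm{u}_X\exp\bigl(\eps\norm{u}_X^2\bigr)\diff\mu_0(u)<\infty$ by \Cref{thm: Fernique}, giving $\abs{I_2}\leq C\Psi(N)$. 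Taking the supremum over all such $\psi$ then yields $W_1(\mu,\mu^N)\leq C\Psi(N)$ with $C$ independent of $N$, which is the claim.

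The argument is essentially routine once the template of \Cref{thm: Well-posedness in W_1} is in place, so I do not expect a serious obstacle; the only points that genuinely need care are the bookkeeping of $N$-independence of every constant---in particular checking that the lower bounds on $Z^N$, hence on $(Z^N)^{-1}$ and $(Z^N)^{-2}$, come out uniform in $N$ from the uniform constants in \Cref{Assumption on phi}~\eqref{item: Assumption on phi - lower bound},~\eqref{item: Assumption on phi - upper bound}---and the up-front choice of $\eps$ small enough that all the relevant Gaussian moments $\int_X\exp\bigl(2\eps\norm{u}_X^2\bigr)\diff\mu_0(u)$ are finite.
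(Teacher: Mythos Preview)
Your proposal is correct and follows essentially the same approach as the paper's own proof: both adapt the argument of \Cref{thm: Well-posedness in W_1} verbatim, replacing $(\Phi(\cdot;y),\Phi(\cdot;y'))$ by $(\Phi,\Phi^N)$ and invoking~\eqref{eqn: bound for the approximation error in phi} in place of \Cref{Assumption on phi}~\eqref{item: Assumption on phi - Lipschitz continuity in y}, with the same $I_1+I_2$ decomposition and the same handling of the normalization constants via the $N$-uniform versions of \Cref{Assumption on phi}~\eqref{item: Assumption on phi - lower bound},~\eqref{item: Assumption on phi - upper bound} and Fernique. Your explicit attention to the choice of $\eps$ and the $N$-independence bookkeeping matches what the paper leaves implicit.
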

\begin{proof}
    The normalization constants $Z$ and $Z^N$ satisfy lower bounds independent of $N$ which are identical to that proved for $Z$ in the course of establishing \Cref{thm: Well-posedness in W_1}.

    Using the estimate~\eqref{eqn: exp estimate},
    \Cref{Assumption on phi}~\eqref{item: Assumption on phi - lower bound}, \eqref{eqn: bound for the approximation error in phi}, and the fact that $\mu_0$ is a Gaussian measure so that the Fernique \Cref{thm: Fernique} applies, we find
    \begin{align*}
        \abs{Z-Z^N} &\leq \int_X \left(\exp(-\Phi(u))\vee\exp(-\Phi^N(u))\right)\abs{\Phi(u) - \Phi^N(u)} \diff\mu_0(u)\\
        &\leq C \int_X \exp\left(\eps\norm{u}_X^2-M\right)\exp\left(\eps\norm{u}_X^2\right)K\Psi(N) \diff\mu_0(u)\\
        &= C\int_X \exp\left(2\eps\norm{u}_X^2\right)\diff\mu_0(u) \Psi(N)\\
        &= C \Psi(N).
    \end{align*}

    Now, let $\psi\in\mathcal{C}_b(X)$ with $\norm{\psi}_{\text{Lip}}\leq 1$. Since $\mu$ and $\mu^N$ are probability measures, we have
    \begin{align*}
        \int_X \psi(u)\diff\left(\mu -\mu^N\right)(u) &= \int_X (\psi(u)-\psi(0)) \diff\left(\mu -\mu^N\right)(u) + \int_X \psi(0)\diff\left(\mu -\mu^N\right)(u)\\
        &= \int_X (\psi(u) -\psi(0)) \diff\left(\mu -\mu^N\right)(u)\\
        &= \int_X (\psi(u) - \psi(0)) \left( \frac{\diff\mu}{\diff\mu_0}(u) - \frac{\diff\mu^N}{\diff\mu_0}(u) \right) \diff\mu_0(u)\\
        &= \int_X (\psi(u) - \psi(0)) \left( Z^{-1}\exp(-\Phi(u)) - \left(Z^N\right)^{-1}\exp(-\Phi^N(u)) \right) \diff\mu_0(u)\\
        &= I_1 + I_2
    \end{align*}
    where
    \begin{align*}
        I_1 &= \int_X (\psi(u) - \psi(0)) Z^{-1} \left( \exp(-\Phi(u)) - \exp(-\Phi^N(u)) \right) \diff\mu_0(u),\\
        I_2 &= \int_X (\psi(u) - \psi(0)) \left(Z^{-1} - \left(Z^N\right)^{-1}\right) \exp(-\Phi^N(u))\diff\mu_0(u).
    \end{align*}
    Using the estimate~\eqref{eqn: exp estimate} again as well as
    the fact that $\norm{\psi}_{\text{Lip}}\leq 1$, \Cref{Assumption on phi}~\eqref{item: Assumption on phi - lower bound}, and~\eqref{eqn: bound for the approximation error in phi} we obtain
    \begin{align*}
        Z I_1 &\leq \int_X \abs{\psi(u)-\psi(0)} \abs{\exp(-\Phi(u)) - \exp(-\Phi^N(y))} \diff\mu_0(u)\\
        &\leq \int_X \norm{u}_X ( \exp(-\Phi(u)) \vee \exp(-\Phi^N(u)) )\abs{\Phi(u)) - \Phi^N(u)} \diff\mu_0(u)\\
        &\leq C \left(\int_X \norm{u} \exp\left(2\eps\norm{u}_X^2\right) \diff\mu_0(u)\right) \Psi(N).
    \end{align*}
    Since all moments of $u$ in $X$ are finite under the Gaussian measure $\mu_0$ by the Fernique Theorem, the integral in the last line can be bounded by using the Cauchy--Schwarz inequality and again the Fernique Theorem. Since $Z$ is bounded from below by a positive constant, this gives a bound on $I_1$.

    Using the fact that $\norm{\psi}_{\text{Lip}}\leq 1$, \Cref{Assumption on phi}~\eqref{item: Assumption on phi - lower bound}, the above bound on $\abs{Z-Z^N}$ and again the fact that $Z$ and $Z^N$ are bounded from below by a positive constant independent of $N$, we get
    \begin{align*}
        I_2 \leq &\int_X \norm{u}_X \abs{Z^{-1} - (Z^N)^{-1}} \exp(-\Phi^N(u)) \diff\mu_0(u)\\
        &\leq C \abs{Z^{-1}-(Z^N)^{-1}} \int_X \norm{u}_X \exp(\eps\norm{u}_X^2)\diff\mu_0(u)\\
        &\leq C\left( Z^{-2} \vee (Z^N)^{{-2}} \right) \abs{Z-Z^N}\\
        &\leq C \Psi(N).
    \end{align*}
    Here we used the same arguments as before to bound the integral $\int_X\norm{u}_X\exp\left(\eps\norm{u}_X^2\right)\diff\mu_0(u)$.
    Combining the bounds for $I_1$ and $I_2$ gives the desired continuity result in the Wasserstein distance.
\end{proof}

Again, if the data is finite, the potential has the form~\eqref{eqn: phi in the case of finitely many observations}, where $y\in\R^m$ is the data, $\mathcal{G}\colon X\to\R^m$ is the observation operator, and $\abs{\cdot}_\Gamma$ is a covariance weighted norm on $\R^m$. If $\mathcal{G}^N$ is an approximation to $\mathcal{G}$ and we define
\begin{equation}
    \Phi^N(u;y)\coloneqq \abs{y-\mathcal{G}^N(u)}_\Gamma
    \label{eqn: Phi^N in the case of finitely many observations}
\end{equation}
then we can define an approximation $\mu^N$ to $\mu$ as in~\eqref{eqn: RN derivative of mu^N} and we have the following corollary.

\begin{corollary}\label{cor: finite data approximation}
    Assume that the measures $\mu$ and $\mu^N$ are both absolutely continuous with respect to $\mu_0$, satisfying $\mu_0(X)=1$, with Randon--Nikod\'{y}m derivative given by~\eqref{eqn: RN derivative of mu},~\eqref{eqn: phi in the case of finitely many observations} and~\eqref{eqn: RN derivative of mu^N},~\eqref{eqn: Phi^N in the case of finitely many observations} respectively. Assume also that $\mathcal{G}$ is approximated by a function $\mathcal{G}^N$ with the property that for any $\eps>0$ there is $K'=K'(\eps)>0$ such that
    \begin{equation}
        \abs{\mathcal{G}(u)-\mathcal{G}^N(u)}\leq K'\exp\left(\eps\norm{u}_X^2\right) \Psi(N),
        \label{eqn: bound for the approximation error in G}
    \end{equation}
    where $\Psi(N)\to 0$ as $N\to\infty$. If $\mathcal{G}$ and $\mathcal{G}^N$ satisfy \Cref{Assumption on G}~\eqref{item: Assumption on G - exponential bound} uniformly in $N$, then the measures $\mu$ and $\mu^N$ are close with respect to the $1$-Wasserstein distance: there is a constant $C$, independent of $N$, such that
    \begin{equation*}
        W_1(\mu,\mu^N)\leq C \Psi(N).
    \end{equation*}
\end{corollary}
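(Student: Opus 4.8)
The plan is to reduce the statement to \Cref{thm: Approximation of measures in W1} by verifying its hypotheses for the pair of potentials associated with the forward maps $\mathcal{G}$ and $\mathcal{G}^N$. Fixing the data $y$ (so that $\abs{y}_\Gamma$ is merely a constant), I would take $\Phi(u)=\tfrac12\abs{y-\mathcal{G}(u)}_\Gamma^2$ as in~\eqref{eqn: phi in the case of finitely many observations} and, in the same spirit as~\eqref{eqn: Phi^N in the case of finitely many observations}, $\Phi^N(u)=\tfrac12\abs{y-\mathcal{G}^N(u)}_\Gamma^2$. Then the corollary is exactly the approximation-theoretic analogue of \Cref{lem: Assumption 2 implies Assumption 1}: one must show that \Cref{Assumption on G}~\eqref{item: Assumption on G - exponential bound} (uniform in $N$) implies \Cref{Assumption on phi}~\eqref{item: Assumption on phi - lower bound} and~\eqref{item: Assumption on phi - upper bound} uniformly in $N$, and that the forward-map error bound~\eqref{eqn: bound for the approximation error in G} implies the potential error bound~\eqref{eqn: bound for the approximation error in phi}.

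\textbf{Step 1 (uniform bounds on $\Phi,\Phi^N$).} \Cref{Assumption on phi}~\eqref{item: Assumption on phi - lower bound} is immediate since $\Phi,\Phi^N\geq0$. For~\eqref{item: Assumption on phi - upper bound}, if $\norm{u}_X<r$ then, using the exponential bound for $\mathcal{G}^N$ with $\eps=1$,
\begin{align*}
    \Phi^N(u) \leq \abs{y}_\Gamma^2 + \abs{\mathcal{G}^N(u)}_\Gamma^2 \leq \abs{y}_\Gamma^2 + \exp\left(2r^2+2M\right),
\end{align*}
and identically for $\Phi$; since \Cref{Assumption on G}~\eqref{item: Assumption on G - exponential bound} holds uniformly in $N$, the resulting constant $K=K(r)$ is independent of $N$.

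\textbf{Step 2 (from~\eqref{eqn: bound for the approximation error in G} to~\eqref{eqn: bound for the approximation error in phi}).} Write $a=\abs{y-\mathcal{G}(u)}_\Gamma$ and $b=\abs{y-\mathcal{G}^N(u)}_\Gamma$, so that $\abs{\Phi(u)-\Phi^N(u)}=\tfrac12\abs{a^2-b^2}=\tfrac12\abs{a-b}\abs{a+b}$. By the reverse triangle inequality $\abs{a-b}\leq\abs{\mathcal{G}(u)-\mathcal{G}^N(u)}_\Gamma$, and by the (uniform-in-$N$) exponential bound $\abs{a+b}\leq 2\abs{y}_\Gamma+\abs{\mathcal{G}(u)}_\Gamma+\abs{\mathcal{G}^N(u)}_\Gamma\leq C\bigl(1+\exp(\eps\norm{u}_X^2+M)\bigr)$. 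Multiplying these and inserting~\eqref{eqn: bound for the approximation error in G} gives
\begin{align*}
    \abs{\Phi(u)-\Phi^N(u)} \leq C K'\bigl(1+\exp(\eps\norm{u}_X^2+M)\bigr)\exp\left(\eps\norm{u}_X^2\right)\Psi(N) \leq K\exp\left(2\eps\norm{u}_X^2\right)\Psi(N),
\end{align*}
with $K=K(\eps)$ independent of $N$. Since $\eps>0$ is arbitrary, relabelling $2\eps\mapsto\eps$ yields precisely~\eqref{eqn: bound for the approximation error in phi}. With all hypotheses of \Cref{thm: Approximation of measures in W1} now verified with constants independent of $N$, that theorem delivers $W_1(\mu,\mu^N)\leq C\Psi(N)$.

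\textbf{Main obstacle.} There is no serious difficulty here; the only point requiring care is the bookkeeping of the Gaussian-integrable exponential weights — one must check that the product of the weight coming from the size of $\mathcal{G},\mathcal{G}^N$ and the weight coming from~\eqref{eqn: bound for the approximation error in G} can be reabsorbed into a single factor $\exp(\eps\norm{u}_X^2)$ with a fresh arbitrary $\eps$ at the price of enlarging $K$, and that every constant produced along the way inherits $N$-independence from the uniformity of \Cref{Assumption on G}~\eqref{item: Assumption on G - exponential bound} and of~\eqref{eqn: bound for the approximation error in G}.
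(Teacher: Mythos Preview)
Your proposal is correct and follows essentially the same route as the paper: verify that \Cref{Assumption on phi}~\eqref{item: Assumption on phi - lower bound}, \eqref{item: Assumption on phi - upper bound} hold uniformly in $N$ (the paper delegates this to \Cref{lem: Assumption 2 implies Assumption 1}, you do it by hand), and then derive~\eqref{eqn: bound for the approximation error in phi} from~\eqref{eqn: bound for the approximation error in G} via the factorisation $\tfrac12\abs{a^2-b^2}$ so that \Cref{thm: Approximation of measures in W1} applies. The only cosmetic difference is that the paper polarises in the inner product (bounding $\tfrac12\abs{2y-\mathcal{G}(u)-\mathcal{G}^N(u)}_\Gamma\,\abs{\mathcal{G}(u)-\mathcal{G}^N(u)}_\Gamma$ directly) whereas you factor the scalar difference $\abs{a-b}\abs{a+b}$ and invoke the reverse triangle inequality; both produce the same bound.
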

\begin{proof}
    Using \Cref{Assumption on G}~\eqref{item: Assumption on G - exponential bound} we get for all $\eps>0$ and $y\in\R^m$
    \begin{align*}
        \abs{\Phi(u)-\Phi^N(u)} &\leq \frac{1}{2} \abs{2y - \mathcal{G}(u)-\mathcal{G}^N(u)}_\Gamma \abs{\mathcal{G}(u)-\mathcal{G}^N(u)}_\Gamma\\
        &\leq  C\left( \abs{y} + \exp\left(\eps\norm{u}_X^2 +M\right) \right) \exp\left(\eps\norm{u}_X^2\right)\Psi(N)\\
        &\leq C(2\eps,y) \exp\left(2\eps\norm{u}_X^2\right)\Psi(N)
    \end{align*}
    such that~\eqref{eqn: bound for the approximation error in phi} holds and, in view of \Cref{lem: Assumption 2 implies Assumption 1}, we can apply \Cref{thm: Approximation of measures in W1}.
\end{proof}

In \Cref{thm: Approximation of measures in W1} it is necessary that the constant in the error bound~\eqref{eqn: bound for the approximation error in phi} for approximating the function $\Phi$ by $\Phi^N$ is integrable by use of the Fernique \Cref{thm: Fernique}. In case such integrability is not at hand, we can still derive the convergence result, albeit at possibly weaker rates.

\begin{theorem}\label{thm: Approximation of measures in W1 without Fernique integrable constant}
    Assume that the measures $\mu$ and $\mu^N$ are both absolutely continuous with respect to $\mu_0$, satisfying $\mu_0(X)=1$, with Randon--Nikod\'{y}m derivative given by~\eqref{eqn: RN derivative of mu} and~\eqref{eqn: RN derivative of mu^N} and that $\Phi$ and $\Phi^N$ satisfy \Cref{Assumption on phi}~\eqref{item: Assumption on phi - lower bound} and~\eqref{item: Assumption on phi - upper bound} with constants uniform in $N$. Assume also that for any $R>0$ there is $K=K(R)>0$ such that for all $u\in X$ with $\norm{u}_X\leq R$
    \begin{equation}
        \abs{\Phi(u)-\Phi^N(u)} \leq K \Psi(N),
        \label{eqn: bound for the approximation error in phi without Fernique integrable constant}
    \end{equation}
    where $\Psi(N)\to 0$ as $N\to\infty$. Then 
    \begin{equation*}
        W_1(\mu,\mu^N) \to 0
    \end{equation*}
    as $N\to\infty$.
\end{theorem}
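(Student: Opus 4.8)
The plan is to run the proof of \Cref{thm: Approximation of measures in W1} essentially unchanged down to the splitting $\int_X\psi(u)\,\diff(\mu-\mu^N)(u)=I_1+I_2$, and to replace every step in which that proof extracts from the approximation error a single $N$-independent, Fernique-integrable weight---precisely the step that now fails---by a two-scale truncation at a radius $R$, taking first $N\to\infty$ and then $R\to\infty$. First I would record the a priori ingredients that do not use~\eqref{eqn: bound for the approximation error in phi without Fernique integrable constant}: by \Cref{Assumption on phi}~\eqref{item: Assumption on phi - upper bound}, which holds uniformly in $N$, the normalization constants $Z$ and $Z^N$ are bounded below by a positive constant independent of $N$, exactly as for $Z$ in the proof of \Cref{thm: Well-posedness in W_1}; by \Cref{Assumption on phi}~\eqref{item: Assumption on phi - lower bound} (also uniform in $N$) and the Fernique \Cref{thm: Fernique} one has $\exp(-\Phi(u))\vee\exp(-\Phi^N(u))\le\exp(\eps\norm{u}_X^2-M)$ for a sufficiently small $\eps>0$, with $\int_X\exp(\eps\norm{u}_X^2-M)\,\diff\mu_0(u)<\infty$ and, after Cauchy--Schwarz and Fernique, $\int_X\norm{u}_X\exp(\eps\norm{u}_X^2-M)\,\diff\mu_0(u)<\infty$; in particular $Z$ and $Z^N$ are bounded above and $\int_X\norm{u}_X\exp(-\Phi^N(u))\,\diff\mu_0(u)\le C$ uniformly in $N$.

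The core step is the claim $\abs{Z-Z^N}\to 0$ as $N\to\infty$. Starting, as in the earlier proof and using~\eqref{eqn: exp estimate}, from
\[
\abs{Z-Z^N}\le\int_X\bigl(\exp(-\Phi(u))\vee\exp(-\Phi^N(u))\bigr)\abs{\Phi(u)-\Phi^N(u)}\,\diff\mu_0(u),
\]
I split the domain at $\norm{u}_X=R$. On $\{\norm{u}_X\le R\}$ I bound $\exp(-\Phi)\vee\exp(-\Phi^N)\le\exp(\eps R^2-M)$ and use the hypothesis $\abs{\Phi-\Phi^N}\le K(R)\Psi(N)$, yielding a contribution $\le C(R)\Psi(N)$. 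On $\{\norm{u}_X>R\}$ I use the crude bound $\abs{\exp(-\Phi(u))-\exp(-\Phi^N(u))}\le\exp(-\Phi(u))+\exp(-\Phi^N(u))\le 2\exp(\eps\norm{u}_X^2-M)$, so the tail contributes at most $2\int_{\{\norm{u}_X>R\}}\exp(\eps\norm{u}_X^2-M)\,\diff\mu_0(u)$, which is independent of $N$ and tends to $0$ as $R\to\infty$ by dominated convergence. Hence, given $\delta>0$, I first choose $R$ so that the tail term is below $\delta/2$ and then $N$ so large that $C(R)\Psi(N)<\delta/2$; this proves $\abs{Z-Z^N}\to 0$, and therefore also $\abs{Z^{-1}-(Z^N)^{-1}}\le(Z^{-2}\vee(Z^N)^{-2})\abs{Z-Z^N}\to 0$, using the uniform positive lower bounds on $Z$ and $Z^N$.

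With these facts in place, for an arbitrary $\psi\in\mathcal{C}_b(X)$ with $\norm{\psi}_{\text{Lip}}\le 1$ I estimate $I_1$ and $I_2$ as in the proof of \Cref{thm: Approximation of measures in W1}. For $I_2$ one has $\abs{I_2}\le\abs{Z^{-1}-(Z^N)^{-1}}\int_X\norm{u}_X\exp(-\Phi^N(u))\,\diff\mu_0(u)\le C\abs{Z^{-1}-(Z^N)^{-1}}\to 0$, with a bound uniform in $\psi$. For $I_1$, since $Z$ is bounded below uniformly it suffices to control $ZI_1\le\int_X\norm{u}_X\bigl(\exp(-\Phi(u))\vee\exp(-\Phi^N(u))\bigr)\abs{\Phi(u)-\Phi^N(u)}\,\diff\mu_0(u)$, and I repeat the identical split at $\norm{u}_X=R$: the ball contributes $\le R\exp(\eps R^2-M)K(R)\Psi(N)=C(R)\Psi(N)$, while the tail contributes $\le 2\int_{\{\norm{u}_X>R\}}\norm{u}_X\exp(\eps\norm{u}_X^2-M)\,\diff\mu_0(u)$, again $N$-independent and $\to 0$ as $R\to\infty$ by dominated convergence (the dominating function being Fernique-integrable after Cauchy--Schwarz). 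The same argument---choose $\delta$, then $R$, then $N$---gives $I_1\to 0$, uniformly in $\psi$. Combining the bounds for $I_1$ and $I_2$ and taking the supremum over admissible $\psi$ yields $W_1(\mu,\mu^N)=\sup_{\norm{\psi}_{\text{Lip}}\le 1}\bigl\lvert\int_X\psi(u)\,\diff(\mu-\mu^N)(u)\bigr\rvert\to 0$.

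The only genuinely new difficulty compared with \Cref{thm: Approximation of measures in W1} is exactly the point just exploited: the approximation error can no longer be absorbed into a fixed Fernique-integrable weight, so one must split off a ball of radius $R$, control the bulk by the $R$-dependent error bound~\eqref{eqn: bound for the approximation error in phi without Fernique integrable constant} and the tail by the $N$-uniform Gaussian bounds coming from \Cref{Assumption on phi}~\eqref{item: Assumption on phi - lower bound} and~\eqref{item: Assumption on phi - upper bound}. The order of the limits ($N\to\infty$ before $R\to\infty$) is essential, and, consistent with the statement, no quantitative rate for $W_1(\mu,\mu^N)$ in terms of $\Psi(N)$ survives this truncation.
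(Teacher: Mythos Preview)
Your proposal is correct and follows essentially the same approach as the paper's proof: both truncate the integrals at radius $R$, use the local bound~\eqref{eqn: bound for the approximation error in phi without Fernique integrable constant} on $\{\norm{u}_X\le R\}$, bound the tail $\{\norm{u}_X>R\}$ uniformly in $N$ via \Cref{Assumption on phi}~\eqref{item: Assumption on phi - lower bound} and Fernique, and then choose $R$ before $N$ in an $\eps/2$--$\eps/2$ argument. Your explicit remark that the resulting bounds on $I_1$ and $I_2$ are uniform in $\psi$, so that the supremum defining $W_1$ can be taken, is a point the paper leaves implicit but is indeed needed.
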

\begin{proof}
    The normalization constants $Z$ and $Z^N$ satisfy lower bounds independent of $N$ which are identical to that proved for $Z$ in the course of establishing \Cref{thm: Well-posedness in W_1}.

    Using the estimate~\eqref{eqn: exp estimate},
    \Cref{Assumption on phi}~\eqref{item: Assumption on phi - lower bound}, and \eqref{eqn: bound for the approximation error in phi without Fernique integrable constant}, we find
    \begin{align*}
        \abs{Z-Z^N} &\leq \int_X \abs{\exp(-\Phi(u)) - \exp(-\Phi^N(u))} \diff\mu_0(u)\\
        &\leq \int_{\{\norm{u}_X\leq R\}} \exp(\eps\norm{u}_X^2 -M) \abs{\Phi(u)-\Phi^N(u)}\diff\mu_0(u)\\
        &\phantom{\mathrel{=}}+ \int_{\{\norm{u}_X>R\}} 2\exp(\eps\norm{u}_X^2-M)\diff\mu_0(u)\\
        &\leq \exp(\eps R^2 -M) K(R)\Psi(N) + J_R\\
        &\eqqcolon K_1(R)\Psi(N) + J_R
    \end{align*}
    where
    \begin{equation*}
        J_R = \int_{\{\norm{u}_X>R\}} 2\exp(\eps\norm{u}_X^2-M)\diff\mu_0(u).
    \end{equation*}
    Because of the Fernique \Cref{thm: Fernique}, $J_R\to 0$ as $R\to\infty$. Therefore, for any $\delta>0$ we can choose $R$ sufficiently large such that $J_R <\delta$. By choosing $N$ large enough that $K_1(R)\Psi(N) <\delta$, we get $\abs{Z-Z^N}<2\delta$. Therefore, we have $Z^N\to Z$ as $N\to\infty$.

    Now, let $\psi\in\mathcal{C}_b(X)$ with $\norm{\psi}_{\text{Lip}}\leq 1$. Since $\mu$ and $\mu^N$ are probability measures, we have
    \begin{align*}
        \int_X \psi(u)\diff\left(\mu -\mu^N\right)(u) &= \int_X (\psi(u)-\psi(0)) \diff\left(\mu -\mu^N\right)(u) + \int_X \psi(0)\diff\left(\mu -\mu^N\right)(u)\\
        &= \int_X (\psi(u) -\psi(0)) \diff\left(\mu -\mu^N\right)(u)\\
        &= \int_X (\psi(u) - \psi(0)) \left( \frac{\diff\mu}{\diff\mu_0}(u) - \frac{\diff\mu^N}{\diff\mu_0}(u) \right) \diff\mu_0(u)\\
        &= \int_X (\psi(u) - \psi(0)) \left( Z^{-1}\exp(-\Phi(u)) - \left(Z^N\right)^{-1}\exp(-\Phi^N(u)) \right) \diff\mu_0(u)\\
        &= I_1 + I_2
    \end{align*}
    where
    \begin{align*}
        I_1 &= \int_X (\psi(u) - \psi(0)) Z^{-1} \left( \exp(-\Phi(u)) - \exp(-\Phi^N(u)) \right) \diff\mu_0(u),\\
        I_2 &= \int_X (\psi(u) - \psi(0)) \left(Z^{-1} - \left(Z^N\right)^{-1}\right) \exp(-\Phi^N(u))\diff\mu_0(u).
    \end{align*}
    Using the estimate~\eqref{eqn: exp estimate},
    the fact that $\norm{\psi}_{\text{Lip}}\leq 1$, \Cref{Assumption on phi}~\eqref{item: Assumption on phi - lower bound}, and~\eqref{eqn: bound for the approximation error in phi without Fernique integrable constant} we obtain
    \begin{align*}
        I_1 &\leq Z^{-1} \int_X \abs{\psi(u)-\psi(0)} \abs{\exp(-\Phi(u)) - \exp(-\Phi^N(y))} \diff\mu_0(u)\\
        &\leq Z^{-1}\int_{\{\norm{u}_X\leq R\}} \norm{u}_X \exp(\eps\norm{u}_X^2 -M) K(R)\Psi(N) \diff\mu_0(u)\\
        &\phantom{\mathrel{=}} + Z^{-1} \int_{\{\norm{u}_X>R\}} 2\norm{u}_X \exp(\eps\norm{u}_X^2 -M)\diff\mu_0(u)\\
        &\leq  Z^{-1} R \exp(\eps R^2-M)K(R)\Psi(N) + Z^{-1}\int_{\{\norm{u}_X>R\}} 2\norm{u}_X \exp(\eps\norm{u}_X^2 -M)\diff\mu_0(u).
    \end{align*}
    Since $Z$ is bounded from below and all momemts of $u$ in $X$ are finite under the Gaussian measure $\mu_0$, an argument similar to the one above for $\abs{Z-Z^N}$ shows that $I_1\to 0$ as $N\to\infty$.

    Using the fact that $\norm{\psi}_{\text{Lip}}\leq 1$, \Cref{Assumption on phi}~\eqref{item: Assumption on phi - lower bound}, the bound on $\abs{Z-Z^N}$ and again the fact that $Z$ and $Z^N$ are bounded from below by a positive constant independent of $N$, we get as before
    \begin{align*}
        I_2 \leq &\int_X \norm{u}_X \abs{Z^{-1} - (Z^N)^{-1}} \exp(-\Phi^N(u)) \diff\mu_0(u)\\
        &\leq C \abs{Z^{-1}-(Z^N)^{-1}} \int_X \norm{u}_X \exp(\eps\norm{u}_X^2)\diff\mu_0(u)\\
        &\leq C\left( Z^{-2} \vee (Z^N)^{{-2}} \right) |Z-Z^N|.
    \end{align*}
    Thus, we have $I_2\to 0$ as $N\to\infty$.
    Combining gives the claimed continuity result in the Wasserstein distance.
\end{proof}

If the data is finite, we can derive the following Corollary in analogy to \Cref{cor: finite data approximation}.
\begin{corollary}
    Assume that the measures $\mu$ and $\mu^N$ are both absolutely continuous with respect to $\mu_0$, satisfying $\mu_0(X)=1$, with Randon--Nikod\'{y}m derivative given by~\eqref{eqn: RN derivative of mu},~\eqref{eqn: phi in the case of finitely many observations} and~\eqref{eqn: RN derivative of mu^N},~\eqref{eqn: Phi^N in the case of finitely many observations} respectively. Assume also that $\mathcal{G}$ is approximated by a function $\mathcal{G}^N$ with the property that for any $R>0$ there is $K'=K'(R)>0$ such that for all $u\in X$ with $\norm{u}_X\leq R$
    \begin{equation*}
        \abs{\mathcal{G}(u)-\mathcal{G}^N(u)}\leq K' \Psi(N),
    \end{equation*}
    where $\Psi(N)\to 0$ as $N\to\infty$. If $\mathcal{G}$ and $\mathcal{G}^N$ satisfy \Cref{Assumption on G}~\eqref{item: Assumption on G - exponential bound} uniformly in $N$, then 
    \begin{equation*}
        W_1(\mu,\mu^N)\to 0
    \end{equation*}
    as $N\to\infty$.
\end{corollary}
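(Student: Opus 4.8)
The plan is to deduce this corollary from \Cref{thm: Approximation of measures in W1 without Fernique integrable constant}, in exactly the same way that \Cref{cor: finite data approximation} was deduced from \Cref{thm: Approximation of measures in W1}. Thus the whole argument reduces to checking, with all constants uniform in $N$, the hypotheses of that theorem for the potentials $\Phi$ and $\Phi^N$ given by~\eqref{eqn: phi in the case of finitely many observations} and~\eqref{eqn: Phi^N in the case of finitely many observations}.

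First I would observe that, by \Cref{lem: Assumption 2 implies Assumption 1}, the fact that $\mathcal{G}$ and $\mathcal{G}^N$ satisfy \Cref{Assumption on G}~\eqref{item: Assumption on G - exponential bound} \emph{uniformly in $N$} implies that $\Phi$ and $\Phi^N$ satisfy \Cref{Assumption on phi}~\eqref{item: Assumption on phi - lower bound} and~\eqref{item: Assumption on phi - upper bound} with constants independent of $N$: only the exponential bound on the observation operator enters the proof of these two items, and its constants are $N$-independent by hypothesis. This supplies the first block of assumptions needed in \Cref{thm: Approximation of measures in W1 without Fernique integrable constant}.

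Next I would verify the local error bound~\eqref{eqn: bound for the approximation error in phi without Fernique integrable constant}. For fixed $R>0$ and $u\in X$ with $\norm{u}_X\le R$, using the identity $a^2-b^2=(a+b)(a-b)$ for the covariance-weighted norms, the equivalence of norms on $\R^m$, the hypothesis $\abs{\mathcal{G}(u)-\mathcal{G}^N(u)}\le K'\Psi(N)$, and \Cref{Assumption on G}~\eqref{item: Assumption on G - exponential bound} (say with $\eps=1$, uniformly in $N$), one gets
\begin{equation*}
    \abs{\Phi(u)-\Phi^N(u)} \leq \frac12 \abs{2y-\mathcal{G}(u)-\mathcal{G}^N(u)}_\Gamma \abs{\mathcal{G}(u)-\mathcal{G}^N(u)}_\Gamma \leq C\bigl( \abs{y} + \exp(R^2+M) \bigr) K' \Psi(N),
\end{equation*}
which is precisely~\eqref{eqn: bound for the approximation error in phi without Fernique integrable constant} with $K(R) \coloneqq C\bigl(\abs{y}+\exp(R^2+M)\bigr)K'$; since $y$ is fixed throughout this section, the $y$-dependence of $K(R)$ is harmless.

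Finally, with \Cref{Assumption on phi}~\eqref{item: Assumption on phi - lower bound} and~\eqref{item: Assumption on phi - upper bound} verified uniformly in $N$ and~\eqref{eqn: bound for the approximation error in phi without Fernique integrable constant} established, \Cref{thm: Approximation of measures in W1 without Fernique integrable constant} applies and yields $W_1(\mu,\mu^N)\to 0$ as $N\to\infty$. There is no real obstacle here: the only point requiring care is the uniformity in $N$ of the constants coming from \Cref{Assumption on G}~\eqref{item: Assumption on G - exponential bound}, which is bookkeeping, since all the analytic work—the Fernique-type truncation argument balancing a tail term against $\Psi(N)$—has already been carried out in \Cref{thm: Approximation of measures in W1 without Fernique integrable constant}.
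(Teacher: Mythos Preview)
Your proposal is correct and follows essentially the same approach as the paper: both verify~\eqref{eqn: bound for the approximation error in phi without Fernique integrable constant} via the factorization $\abs{\Phi-\Phi^N}\le \tfrac12\abs{2y-\mathcal{G}-\mathcal{G}^N}_\Gamma\abs{\mathcal{G}-\mathcal{G}^N}_\Gamma$ together with \Cref{Assumption on G}~\eqref{item: Assumption on G - exponential bound} at $\eps=1$, invoke \Cref{lem: Assumption 2 implies Assumption 1} for \Cref{Assumption on phi}~\eqref{item: Assumption on phi - lower bound}--\eqref{item: Assumption on phi - upper bound}, and then apply \Cref{thm: Approximation of measures in W1 without Fernique integrable constant}. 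Your write-up is slightly more explicit about the uniformity in $N$, but the argument is identical.
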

\begin{proof}
    Using \Cref{Assumption on G}~\eqref{item: Assumption on G - exponential bound} (with $\eps=1$) we get for all $R>0$, $u\in X$ with $\norm{u}_X\leq R$, and $y\in\R^m$
    \begin{align*}
        \abs{\Phi(u)-\Phi^N(u)} &\leq \frac{1}{2} \abs{2y - \mathcal{G}(u)-\mathcal{G}^N(u)}_\Gamma \abs{\mathcal{G}(u)-\mathcal{G}^N(u)}_\Gamma\\
        &\leq  C\left( \abs{y} + \exp\left(\norm{u}_X^2 +M(1)\right) \right) K(R)\Psi(N)\\
        &\leq C \left(\abs{y} + \exp(R^2+M(1))\right) K(R)\Psi(N)
    \end{align*}
    such that~\eqref{eqn: bound for the approximation error in phi without Fernique integrable constant} holds and, in view of \Cref{lem: Assumption 2 implies Assumption 1}, we can apply \Cref{thm: Approximation of measures in W1 without Fernique integrable constant}.
\end{proof}

\section{Bayesian inverse problems for conservation laws}\label{sec: conservation laws}

In this section we use stability and convergence rate estimates for scalar conservation laws to establish that the associated inverse problems may be placed in the general framework for Bayesian inverse problems in the Wasserstein distance. To this end, we consider scalar conservation laws for which the available theory is very mature as well as scalar conservation laws with discontinuous flux where stability in the model parameters and convergence rates were established only very recently.
We start by recalling the necessary well-posedness results for entropy solutions of scalar conservation laws.

\subsection{Scalar conservation laws in several space dimensions}

We consider the Cauchy problem for scalar conservation laws of the form
\begin{gather}
    \begin{aligned}
        w_t + \nabla_x\cdot f(w) =0,& &&(x,t)\in \R^d\times(0,T),\\
        w(x,0) = \bar{w}(x),& &&x\in \R^d.
    \end{aligned}
    \label{eqn: conservation law}
\end{gather}
Here, the unknown is $w\colon \R^d\times [0,T]\to\R$ and $f=(f_1,\ldots,f_d)\in\mathcal{C}^{0,1}(\R;\R^d)$ is the flux function.

\subsubsection{Entropy solutions}
Since weak solutions of~\eqref{eqn: conservation law} are not unique we consider entropy solutions in the following sense.
\begin{definition}
    We call a function $w\in\mathrm{L}^\infty(\R^d\times(0,T))\cap\mathcal{C}([0,T];\mathrm{L}^1(\R^d))$ an entropy solution of~\eqref{eqn: conservation law} if for all $c\in\R$
    \begin{equation*}
        \int_0^T\int_{\R^d} \bigg( \abs{w-c}\varphi_t + \sign(w-c)\sum_{j=1}^d(f_j(u) - f_j(c))\varphi_{x_j} \bigg)\diff x\diff t
         + \int_{\R^d} \abs{\bar{w}(x)-c}\varphi(x,0)\diff x \geq 0
    \end{equation*}
    for all nonnegative $\varphi\in\mathcal{C}_c^\infty(\R^d\times[0,T))$.
\end{definition}
It is well-known that the Cauchy problem~\eqref{eqn: conservation law} admits, for each $\bar{w}\in\left(\mathrm{L}^1\cap\mathrm{BV}\right)(\R^d)$, a unique entropy solution and we summarize the classical results on existence and uniqueness of entropy solutions in the following theorem (see, e.g., \cite{godlewski1991hyperbolic}).
\begin{theorem}
\phantom{.}
    \begin{remunerate}
        \item For every $\bar{w}\in\mathrm{L}^\infty(\R^d)$, \eqref{eqn: conservation law} admits a unique entropy solution $w\in\mathrm{L}^\infty(\R^d\times(0,T))$.
        \item For every $t>0$, the solution operator $S_t$ given by
        \begin{equation*}
            S_t \bar{w} = w(\cdot,t)
        \end{equation*}
        satisfies
        \begin{romannum}
            \item $S_t\colon\mathrm{L}^1(\R^d)\to\mathrm{L}^1(\R^d)$ is a contraction, i.e.,
            \begin{equation*}
                \norm{S_t \bar{w} - S_t \hat{w}}_{\mathrm{L}^1(\R^d)} \leq \norm{\bar{w}-\hat{w}}_{\mathrm{L}^1(\R^d)}
            \end{equation*}
            for all $\bar{w},\hat{w}\in\mathrm{L}^1(\R^d)$.
            \item $S_t$ maps $(\mathrm{L}^1\cap\mathrm{BV})(\R^d)$ into itself and
            \begin{equation*}
                \mathrm{TV}(S_t \bar{w}) \leq \mathrm{TV}(\bar{w})
            \end{equation*}
            for all $\bar{w}\in (\mathrm{L}^1\cap\mathrm{BV})(\R^d)$.
            \item For every $\bar{w}\in (\mathrm{L}^1\cap\mathrm{L}^\infty)(\R^d)$
            \begin{align}
                \norm{S_t \bar{w}}_{\mathrm{L}^1(\R^d)} &\leq \norm{\bar{w}}_{\mathrm{L}^1(\R^d)}, \label{eqn: L^1 bound of the ES}\\
                \norm{S_t \bar{w}}_{\mathrm{L}^\infty(\R^d)} &\leq \norm{\bar{w}}_{\mathrm{L}^\infty(\R^d)}. \label{eqn: L^infty bound of the ES}
            \end{align}
            \item $t\mapsto S_t$ is a uniformly continuous mapping from $\mathrm{L}^1(\R^d)$ into $\mathcal{C}_b([0,\infty);\mathrm{L}^1(\R^d))$ and
            \begin{equation*}
                \norm{t\mapsto S_t \bar{w}}_{\mathcal{C}([0,T];\mathrm{L}^1(\R^d))} \leq \norm{\bar{w}}_{\mathrm{L}^1(\R^d)}
            \end{equation*}
            for all $\bar{w}\in\mathrm{L}^1(\R^d)$.
        \end{romannum}
    \end{remunerate}
\end{theorem}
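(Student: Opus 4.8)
The plan is to follow the classical Kru\v{z}kov theory; since the statement is a standard compendium of results, in the paper it is recalled from the literature (e.g.\ \cite{godlewski1991hyperbolic}) rather than reproved, but a self-contained argument would proceed in the three steps below.

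\textbf{Uniqueness and the $\mathrm L^1$-contraction (i).} Given two entropy solutions $w,\hat w$ with data $\bar w,\hat w_0$, I would run Kru\v{z}kov's doubling-of-variables argument: insert into the entropy inequality for $w(x,t)$ the constant $c=\hat w(y,s)$, and symmetrically $c=w(x,t)$ into the inequality for $\hat w(y,s)$; add the two, test against a nonnegative $\varphi(x,t,y,s)$ that localises to the diagonal $\{x=y,\ t=s\}$, and send the localisation parameter to zero. The flux contributions cancel because $\sign(w-c)\sum_j(f_j(w)-f_j(c))$ is symmetric under exchanging $w$ and $c$, and what remains is $\partial_t\!\int_{\R^d}\abs{w(x,t)-\hat w(x,t)}\diff x\le 0$ in the sense of distributions, with the correct initial trace. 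Integrating in time yields $\norm{S_t\bar w-S_t\hat w}_{\mathrm L^1}\le\norm{\bar w-\hat w}_{\mathrm L^1}$; taking $\bar w=\hat w_0$ gives uniqueness. Since constants solve the equation, comparison gives $S_t0=0$, whence $\norm{S_t\bar w}_{\mathrm L^1}\le\norm{\bar w}_{\mathrm L^1}$, and the maximum principle (again via comparison with constants) gives $\norm{S_t\bar w}_{\mathrm L^\infty}\le\norm{\bar w}_{\mathrm L^\infty}$, i.e.\ (iii).

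\textbf{Existence.} For $\bar w\in(\mathrm L^1\cap\mathrm{BV})(\R^d)$ I would regularise by vanishing viscosity, $w^\eps_t+\nabla_x\cdot f(w^\eps)=\eps\Delta w^\eps$ (a monotone finite volume scheme would work equally well), which has a smooth solution: the maximum principle gives a uniform $\mathrm L^\infty$ bound, translation invariance together with the parabolic $\mathrm L^1$-contraction gives a uniform $\mathrm{BV}$ bound, and the equation then yields equicontinuity in time. Compactness (Helly / Aubin--Lions) extracts a subsequence converging in $\mathcal C([0,T];\mathrm L^1_{\mathrm{loc}})$ to some $w$; multiplying the viscous Kru\v{z}kov inequality by a test function and letting $\eps\to0$ (the $\eps\Delta w^\eps$ term vanishes after integrating by parts against the convex entropy) shows $w$ is an entropy solution. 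For merely $\mathrm L^\infty$ data one approximates by $\mathrm{BV}$ data and uses the contraction of Step~1 to pass to the limit, giving part~1.

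\textbf{The $\mathrm{TV}$ bound (ii) and time continuity (iv).} The estimate $\mathrm{TV}(S_t\bar w)\le\mathrm{TV}(\bar w)$ is immediate from the $\mathrm L^1$-contraction applied to $\bar w$ and its shift: by translation invariance $\norm{S_t\bar w(\cdot+h)-S_t\bar w}_{\mathrm L^1}\le\norm{\bar w(\cdot+h)-\bar w}_{\mathrm L^1}$, and dividing by $\abs{h}$ and taking the supremum over $h$ gives the claim. For $\bar w\in\mathrm{BV}$, the equation together with the spatial $\mathrm{BV}$ bound gives $\norm{S_t\bar w-S_s\bar w}_{\mathrm L^1}\le C\,\abs{t-s}$ with $C$ depending on $\mathrm{Lip}(f)$ and $\mathrm{TV}(\bar w)$; density of $\mathrm{BV}$ in $\mathrm L^1$ plus the contraction upgrades this to uniform continuity of $t\mapsto S_t\bar w$ on all of $\mathrm L^1(\R^d)$, and the norm bound in (iv) is just (iii) taken uniformly in $t$. \textbf{The main obstacle} is the doubling-of-variables computation of Step~1 --- tracking the initial trace and justifying the limit in the localisation parameter --- and, secondarily, the rigorous passage to the limit in the viscous entropy inequality of Step~2.
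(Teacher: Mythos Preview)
Your reading is correct: the paper does not prove this theorem at all but simply quotes it as classical, with a reference to \cite{godlewski1991hyperbolic}. Your sketch is the standard Kru\v{z}kov route and is accurate in outline; there is nothing to compare against, and no gap worth flagging beyond the ones you already identify yourself.
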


In the following, our notation for the solution operator will not only carry the dependence on the initial datum, but also on the flux. We will write
\begin{equation*}
    S_t (\bar{w},f) = w(\cdot,t)
\end{equation*}
and understand $S_t$ as a map from $\left(\mathrm{L}^1\cap\mathrm{L}^\infty\cap\mathrm{BV}\right)(\R^d)\times\mathcal{C}^{0,1}(\R;\R^d)$ to $\mathrm{L}^1(\R^d)$ with the properties listed above. The following theorem shows that this map is locally Lipschitz continuous.
\begin{theorem}[{\cite[Thm.~4.3]{HoldenEtAl2015}}]
    Assume $\bar{w},\hat{w}\in\left(\mathrm{L}^1\cap\mathrm{L}^\infty\cap\mathrm{BV}\right)(\R^d)$ and $f,g\in\mathcal{C}^{0,1}(\R;\R^d)$. Then the solution operator satisfies
    \begin{equation}
        \norm{S_t(\bar{w},f) - S_t(\hat{w},g)}_{\mathrm{L}^1(\R^d)} \leq \norm{\bar{w}-\hat{w}}_{\mathrm{L}^1(\R^d)} + t \min\left(\mathrm{TV}(\bar{w}),\mathrm{TV}(\hat{w})\right) \norm{f-g}_{\mathrm{Lip}}
        \label{eqn: Lipschitz continuity of the solution operator (in f and u_0)}
    \end{equation}
    for every $0\leq t\leq T$.
\end{theorem}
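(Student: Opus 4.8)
The plan is to deduce the two-parameter estimate from two one-parameter estimates, glued by the triangle inequality. Inserting the intermediate solution $S_t(\hat{w},f)$ we write
\[
\norm{S_t(\bar{w},f)-S_t(\hat{w},g)}_{\mathrm{L}^1(\R^d)}
\le \norm{S_t(\bar{w},f)-S_t(\hat{w},f)}_{\mathrm{L}^1(\R^d)}
 + \norm{S_t(\hat{w},f)-S_t(\hat{w},g)}_{\mathrm{L}^1(\R^d)} .
\]
The first term is bounded by $\norm{\bar{w}-\hat{w}}_{\mathrm{L}^1(\R^d)}$ by the $\mathrm{L}^1$-contraction property (same flux $f$) from the previous theorem, so it remains to prove the \emph{continuous dependence on the flux},
\[
\norm{S_t(\hat{w},f)-S_t(\hat{w},g)}_{\mathrm{L}^1(\R^d)}
\le t\,\mathrm{TV}(\hat{w})\,\norm{f-g}_{\mathrm{Lip}} .
\]
Inserting instead the intermediate solution $S_t(\bar{w},g)$ and arguing symmetrically yields the same bound with $\mathrm{TV}(\bar{w})$ in place of $\mathrm{TV}(\hat{w})$; taking the smaller of the two estimates produces the $\min$ in the claim.

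For the continuous-dependence estimate I would use Kruzkov's doubling of variables, treating $v\coloneqq S_\cdot(\hat{w},g)$ as an \emph{approximate} entropy solution of the conservation law with flux $f$. Since $v$ is an entropy solution for the flux $g$, writing $g_j=f_j+(g_j-f_j)$ shows that $v$ satisfies the Kruzkov inequality for the flux $f$ up to the error term $\nabla_x\cdot\bigl(f(v)-g(v)\bigr)$ tested against the entropy variables. One then doubles the space–time variables $(x,t)$ and $(y,s)$, uses the entropy inequality for $w\coloneqq S_\cdot(\hat{w},f)$ with $c=v(y,s)$ and the perturbed entropy inequality for $v$ with $c=w(x,t)$, adds them, and tests against a standard mollifier $\omega_\rho(x-y)\theta_\rho(t-s)$ approximating a time characteristic function. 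In the limit $\rho\to 0$ the usual Kruzkov terms reproduce $\norm{w(\cdot,t)-v(\cdot,t)}_{\mathrm{L}^1(\R^d)}$, controlled by the vanishing initial difference, while the flux-error term contributes at most
\[
\int_0^t\!\!\int_{\R^d}\bigl|\nabla_x\cdot\bigl(f(v(x,s))-g(v(x,s))\bigr)\bigr|\diff x\,\diff s
\le \int_0^t \norm{f-g}_{\mathrm{Lip}}\,\mathrm{TV}\bigl(v(\cdot,s)\bigr)\diff s
\le t\,\norm{f-g}_{\mathrm{Lip}}\,\mathrm{TV}(\hat{w}),
\]
using the TV-diminishing property $\mathrm{TV}(v(\cdot,s))\le\mathrm{TV}(\hat{w})$ and noting that only the Lipschitz seminorm of $f-g$ on the range $[-\norm{\hat{w}}_{\mathrm{L}^\infty},\norm{\hat{w}}_{\mathrm{L}^\infty}]$ of $v$ enters, which is dominated by $\norm{f-g}_{\mathrm{Lip}}$; the BV-in-space bound is precisely what makes $\nabla_x\cdot(f(v)-g(v))$ a finite measure of the stated mass.

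The main obstacle is the rigorous execution of the doubling-of-variables argument with two distinct fluxes: one must justify that $v$ satisfies the perturbed Kruzkov inequality in the distributional sense, keep careful track of the cross terms $\sign(w-v)\bigl(f_j(w)-f_j(v)\bigr)$ versus $\sign(w-v)\bigl(g_j(v)-f_j(v)\bigr)$, and pass to the limit in the mollification parameters (and in a separate time mollification used to replace the test function by $\mathbf 1_{[0,t]}$) without losing the constant. A convenient alternative, avoiding re-deriving this from scratch, is to invoke the classical Kuznetsov-type / Lucier stability estimate for conservation laws with perturbed flux directly; in either case the numerology $t\,\mathrm{TV}(\hat{w})\,\norm{f-g}_{\mathrm{Lip}}$ is exactly the accumulated-in-time $\mathrm{L}^1$ error generated by the source $\nabla_x\cdot(f(v)-g(v))$.
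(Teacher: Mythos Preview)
The paper does not prove this statement at all: it is quoted verbatim as \cite[Thm.~4.3]{HoldenEtAl2015} and used as a black box, so there is no ``paper's own proof'' to compare against. Your sketch is nonetheless the standard argument (and essentially the one in the cited reference): split via the triangle inequality with the intermediate solution $S_t(\hat{w},f)$, use the $\mathrm{L}^1$-contraction for the first piece, and derive the flux-stability bound $t\,\mathrm{TV}(\hat w)\,\norm{f-g}_{\mathrm{Lip}}$ by the Kruzkov/Kuznetsov doubling argument (or by directly invoking Lucier's continuous-dependence estimate), then symmetrize to obtain the $\min$. The outline is sound; the only caveat is that the rigorous doubling-of-variables with distinct fluxes requires some care in the limit passages, which you have correctly flagged but not carried out in detail.
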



\subsubsection{Finite volume methods}

We briefly describe the conventional approach of numerically approximating solutions of scalar conservation laws through finite volume methods (cf.~\cite{leveque1992numerical,crandall1980monotone,kuznetsov76}).

We discretize the spatial computational domain with cells
\begin{equation*}
    \cell_{i_1,\ldots,i_d} \coloneqq (x^1_{i_1-\hf},x^1_{i_1+\hf})\times\ldots\times(x^d_{i_d-\hf},x^d_{i_d+\hf}) \subset\R^d
\end{equation*}
with corresponding cell midpoints
\begin{equation*}
    x_{i_1,\ldots,i_d} \coloneqq \left( \frac{x^1_{i_1+\hf} + x^1_{i_1-\hf}}{2},\ldots,\frac{x^d_{i_d+\hf} + x^d_{i_d-\hf}}{2} \right).
\end{equation*}
For simplicity, we assume that the mesh is equidistant, meaning
\begin{equation*}
    x^k_{i_k+\hf} - x^k_{i_k-\hf} = \Dx, \qquad\text{for all }k=1,\ldots,d\text{ and } i_k\in\Z,
\end{equation*}
for some $\Dx>0$. We consider a uniform discretization in time with time step $\Dt>0$ such that the time interval $[0,T]$ is partitioned into intervals $[t^n,t^{n+1})$ where $t^n=n\Dt$ and that $\lambda\coloneqq \frac{\Dt}{\Dx}$ is constant and satisfies a standard CFL condition based on the maximum wave speed (see e.g.~\cite{godlewski1991hyperbolic}).

We consider the following numerical scheme:
\begin{gather}
    \begin{aligned}
        w_{i_1,\ldots,i_d}^{n+1} = w_{i_1,\ldots,i_d}^n - \lambda\sum_{k=1}^d \left( F_{i_1,\ldots,i_k+\hf,\ldots,i_d}^{k,n} - F_{i_1,\ldots,i_k-\hf,\ldots,i_d}^{k,n} \right),&\\
        w_{i_1,\ldots,i_d}^0 = \frac{1}{\Dx^d} \int_{\cell_{i_1,\ldots,i_d}} \bar{w}(x)\diff x,&
    \end{aligned}
    \label{eqn: Finite volume method}
\end{gather}
where $F^{k,n}$ is a numerical flux function in direction $k$. In a $(2p+1)$-point scheme, the numerical flux function $F_{i_1,\ldots,i_k+\hf,\ldots,i_d}^{k,n}$ can be written as a function of the $2p$ values $\left(w_{i_1,\ldots,i_k+j,\ldots,i_D}^n\right)_{j=-p+1}^p$. Furthermore, we assume that the numerical flux function is consistent with $f$ and locally Lipschitz continuous, i.e., for every bounded set $K\subset\R$, there exists a constant $C>0$ such that for $k=1,\ldots,d$,
\begin{equation*}
    \abs{F_{i_1,\ldots,i_k+\hf,\ldots,i_d}^{k,n} - f_k\left(w_{i_1,\ldots,i_d}^n\right)} \leq C \sum_{j=-p+1}^p \abs{w_{i_1,\ldots,i_k+j,\ldots,i_d}^n - w_{i_1,\ldots,i_d}^n}
\end{equation*}
whenever $w_{i_1,\ldots,i_k-p+1,\ldots,i_d}^n,\ldots, w_{i+1,\ldots,i_k+p,\ldots,i_d}^n\in K$. Finally, we consider monotone finite volume methods where the right-hand side of~\eqref{eqn: Finite volume method} is nondecreasing in each argument.

We define the numerical solution operator
\begin{equation*}
    S^\Dx_t\colon \left(\mathrm{L}^1\cap\mathrm{L}^\infty\cap\mathrm{BV}\right)(\R^d)\times\mathcal{C}^{0,1}(\R;\R^d)\to\left(\mathrm{L}^1\cap\mathrm{L}^\infty\cap\mathrm{BV}\right)(\R^d)
\end{equation*}
by
\begin{equation*}
    (S^\Dx_t (\bar{w},f))(x) = w_{i_1,\ldots,i_d}^n, \qquad (x,t)\in \cell_{i_1,\ldots,i_d}\times [t^n,t^{n+1}).
\end{equation*}

The following convergence rate estimate is due to Kutznetsov. 
\begin{theorem}[{\cite[Thm.~4]{kuznetsov76}}]
    Let $\bar{w}\in\left(\mathrm{L}^1\cap\mathrm{L}^\infty\cap\mathrm{BV}\right)(\R^d)$, $f\in\mathcal{C}^{0,1}(\R;\R^d)$, $S_t(\bar{w},f)$ the corresponding entropy solution of~\eqref{eqn: conservation law} and $S^\Dx_t(\bar{w},f)$ the numerical approximation given by~\eqref{eqn: Finite volume method}. Then we have the following convergence rate estimate:
    \begin{equation}
        \norm{S_t(\bar{w},f) - S^\Dx_t(\bar{w},f)}_{\mathrm{L}^1(\R^d)} \leq C \left( \mathrm{TV}(\bar{w}) + \norm{f}_{\mathrm{Lip}}\mathrm{TV}(\bar{w}) \right) \Dx^{\hf}
        \label{eqn: convergence rate}
    \end{equation}
    for all $0\leq t\leq T$ where $C$ is independent of $\Dt,\Dx, \bar{w}$, and $f$.  
\end{theorem}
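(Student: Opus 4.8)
The plan is to follow Kuznetsov's classical doubling-of-variables argument, keeping the dependence on the flux explicit throughout. The starting point is the \emph{discrete entropy inequality} for monotone schemes: because \eqref{eqn: Finite volume method} is monotone, consistent with $f$ and built from a locally Lipschitz numerical flux, the Crandall--Majda--type construction furnishes numerical entropy fluxes $Q^{k,n}$, consistent with the Kru\v{z}kov entropy fluxes $q_k(\cdot,c)=\sign(\cdot-c)(f_k(\cdot)-f_k(c))$ and locally Lipschitz with constant controlled by $\norm f_{\mathrm{Lip}}$, such that for every $c\in\R$
\begin{equation*}
    \abs{w^{n+1}_{i_1,\dots,i_d}-c} \le \abs{w^{n}_{i_1,\dots,i_d}-c} - \lambda\sum_{k=1}^d\big(Q^{k,n}_{i_1,\dots,i_k+\hf,\dots,i_d} - Q^{k,n}_{i_1,\dots,i_k-\hf,\dots,i_d}\big).
\end{equation*}
Monotonicity, consistency and the CFL condition also supply the a priori bounds $\norm{S^\Dx_t(\bar w,f)}_{\mathrm{L}^\infty}\le\norm{\bar w}_{\mathrm{L}^\infty}$ and $\mathrm{TV}(S^\Dx_t(\bar w,f))\le\mathrm{TV}(\bar w)$, the initial-layer estimate $\norm{\bar w-S^\Dx_0(\bar w,f)}_{\mathrm{L}^1}\le C\,\mathrm{TV}(\bar w)\,\Dx$, and the time-translation estimate $\Dx^d\sum_{i_1,\dots,i_d}\abs{w^{n+1}_{i_1,\dots,i_d}-w^n_{i_1,\dots,i_d}}\le C\norm f_{\mathrm{Lip}}\,\mathrm{TV}(\bar w)\,\Dt$, so that the numerical solution $v$, $v(\cdot,t)=S^\Dx_t(\bar w,f)$, is Lipschitz in time into $\mathrm{L}^1$ with constant $\le C\norm f_{\mathrm{Lip}}\mathrm{TV}(\bar w)$.

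Next I would invoke \emph{Kuznetsov's approximation lemma}. One introduces the tensor-product kernel $\omega_{\epsilon,\nu}(x,t,y,s)=\Omega_\nu(t-s)\prod_{k=1}^d\rho_\epsilon(x_k-y_k)$ built from standard mollifiers of width $\nu$ in time and $\epsilon$ in each spatial direction, pairs the exact Kru\v{z}kov entropy inequality for the entropy solution $u$, $u(\cdot,t)=S_t(\bar w,f)$, against mollified indicators formed from $v$, and symmetrically pairs the discrete inequality above (multiplied by $\omega$, summed over cells, integrated in $(y,s)$) against $u$. Adding the two and using summation by parts to move the discrete difference operators onto the smooth kernel $\omega$ yields the standard Kuznetsov bound
\begin{equation*}
    \norm{u(\cdot,T)-v(\cdot,T)}_{\mathrm{L}^1} \le C\Big(\norm{\bar w-S^\Dx_0(\bar w,f)}_{\mathrm{L}^1} + (1+\norm f_{\mathrm{Lip}})\,\mathrm{TV}(\bar w)\,(\epsilon+\nu) + \tfrac{1}{\epsilon}\,\mathcal E_{\mathrm{sp}} + \mathcal E_{\mathrm{tm}}\Big),
\end{equation*}
where $\mathcal E_{\mathrm{sp}}$ is the spatial consistency residual produced when the difference quotients of $v$ meet the smooth kernel --- controlled, via consistency of the numerical flux, the Lipschitz bound on $Q^{k,n}$, and $\mathrm{TV}(v)\le\mathrm{TV}(\bar w)$, by $C(1+\norm f_{\mathrm{Lip}})\mathrm{TV}(\bar w)\,\Dx$ --- and $\mathcal E_{\mathrm{tm}}$ is the analogous temporal residual, controlled by $C\norm f_{\mathrm{Lip}}\mathrm{TV}(\bar w)\,\Dt$ via the time-translation estimate.

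It then remains to insert these bounds and optimize. Since $\lambda=\Dt/\Dx$ is fixed we have $\Dt\simeq\Dx$; choosing $\epsilon=\nu=\Dx^{\hf}$ balances $(1+\norm f_{\mathrm{Lip}})\mathrm{TV}(\bar w)\,\epsilon$ against $\mathcal E_{\mathrm{sp}}/\epsilon$, while the initial-layer term and the temporal residual are $O\big((1+\norm f_{\mathrm{Lip}})\mathrm{TV}(\bar w)\,\Dx\big)\le O\big((1+\norm f_{\mathrm{Lip}})\mathrm{TV}(\bar w)\,\Dx^{\hf}\big)$. Collecting, every contribution is bounded by $C\big(\mathrm{TV}(\bar w)+\norm f_{\mathrm{Lip}}\mathrm{TV}(\bar w)\big)\Dx^{\hf}$ with $C$ depending only on $T$, $\lambda$, $d$ and $\norm{\bar w}_{\mathrm{L}^\infty}$, which is \eqref{eqn: convergence rate}. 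The main obstacle is the middle step: carrying out the multi-dimensional summation by parts on the discrete entropy inequality so that the residuals come out genuinely of order $\Dx$ (rather than $O(1)$), while tracking how every term --- notably the numerical entropy fluxes $Q^{k,n}$ and the time-translation estimate --- scales with $\norm f_{\mathrm{Lip}}$, since the classical statement records only the $\mathrm{TV}(\bar w)$ dependence and the flux-explicit version needs these constants re-examined.
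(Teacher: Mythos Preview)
The paper does not actually prove this theorem: it is stated as a citation of \cite[Thm.~4]{kuznetsov76} and used as a black box in the subsequent \Cref{lem: approximation satisfies assumption on approximation}. So there is no proof in the paper to compare against.

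That said, your sketch is a faithful outline of the classical Kuznetsov argument in the form given, e.g., in \cite{kuznetsov76,crandall1980monotone,godlewski1991hyperbolic}: discrete Kru\v{z}kov entropy inequalities for monotone schemes, the standard a priori bounds ($\mathrm{L}^\infty$, $\mathrm{TV}$, $\mathrm{L}^1$-time-Lipschitz), doubling of variables against the mollified kernel, and optimization $\epsilon\simeq\nu\simeq\Dx^{1/2}$. Your emphasis on tracking the $\norm{f}_{\mathrm{Lip}}$-dependence through the numerical entropy fluxes and the time-translation estimate is exactly the bookkeeping needed to obtain the constant in the form $C(\mathrm{TV}(\bar w)+\norm{f}_{\mathrm{Lip}}\mathrm{TV}(\bar w))$ as stated here, rather than the less explicit constant in Kuznetsov's original formulation. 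One small caveat: in the standard version of the lemma the temporal residual also picks up a factor $1/\nu$ (from differentiating the time mollifier), so that $\mathcal{E}_{\mathrm{tm}}/\nu\simeq \norm{f}_{\mathrm{Lip}}\mathrm{TV}(\bar w)\,\Dt/\nu$; with $\nu=\Dx^{1/2}$ and $\Dt\simeq\Dx$ this is still $O(\Dx^{1/2})$, so the conclusion is unaffected, but you should display that division in your bound to keep the balancing honest.
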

Note that the convergence rate estimate~\eqref{eqn: convergence rate} is optimal in the sense that the exponent $\hf$ cannot be improved without further assumptions on the initial datum \cite{BadwaikRuf2020,Sabac1997} (see~\cite{Ruf2019} for an overview of the literature regarding optimal convergence rates).

\subsubsection{Bayesian inverse problems for scalar conservation laws}
We will now use the above well-posedness and approximation results to show that the abstract framework of \Cref{sec: well-posedness,sec: approximation} can be applied to Bayesian inverse problems for scalar conservation laws where the inputs $u=(\bar{w},f)$ are inferred from measurements of the observables.
To that end, we define $X=\left(\mathrm{L}^1\cap\mathrm{L}^\infty\cap\mathrm{BV}\right)(\R^d)\times V$ equipped with the norm
\begin{equation*}
    \norm{(\bar{w},f)}_X = \norm{\bar{w}}_{\mathrm{L}^1(\R^d)} + \mathrm{TV}(\bar{w}) + \norm{\bar{w}}_{\mathrm{L}^\infty(\R^d)} + \norm{f}_{V}
\end{equation*}
where $V$ is some separable Banach space embedded in $\mathcal{C}^{0,1}(\R;\R^d)$. Specifically, in light of the Sobolev Embedding Theorem we can take $V= \mathrm{W}^{2,p}(\R;\R^d)$ for any $1<p<\infty$, for example $V=\mathrm{H}^2(\R;\R^d)$.
We then consider observation operators of the form $\mathcal{G}\colon X \to\R^m$ given by
\begin{equation}
    (\mathcal{G}(\bar{w},f))_j = \int_0^T\int_{\R^d} \psi_j(x,t) g_j((S_t(\bar{w},f))(x))\diff x\diff t,\qquad j=1,\ldots,m,
    \label{eqn: observation operator}
\end{equation}
for $\psi_j\in\mathrm{L}^1(\R^d\times(0,T))\cap\mathrm{L}^1(0,T;\mathrm{L}^\infty(\R^d))$ and $g_j\in\mathcal{C}^1(\R;\R^d)$ with $\norm{g_j}_{\mathcal{C}^1(\R;\R^d)}<\infty$.

The following lemma shows that the Bayesian inverse problem of determining the initial datum $\bar{w}$ and the flux function $f$ given observations of the form~\eqref{eqn: observation operator} is well-posed.
\begin{lemma}\label{lem: observation operator satisfies assumptions on G}
    The observation operator $\mathcal{G}$ defined by~\eqref{eqn: observation operator} satisfies \Cref{Assumption on G}. Therefore, by \Cref{cor: finite data well-posedness}, the Bayesian inverse problem associated with the observation operator $\mathcal{G}$ is well-posed.
\end{lemma}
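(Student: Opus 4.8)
The plan is to verify the two parts of \Cref{Assumption on G} for the observation operator $\mathcal{G}$ in~\eqref{eqn: observation operator}, using the well-posedness theory for entropy solutions and the structure of $X$. The key point is that the norm on $X$ already bundles together $\norm{\bar w}_{\mathrm{L}^1}$, $\mathrm{TV}(\bar w)$, $\norm{\bar w}_{\mathrm{L}^\infty}$ and $\norm{f}_V$, so all the quantities appearing in the stability estimate~\eqref{eqn: Lipschitz continuity of the solution operator (in f and u_0)} and in the $\mathrm{L}^1$/$\mathrm{L}^\infty$ bounds on $S_t$ are controlled by $\norm{(\bar w,f)}_X$. Since $V \hookrightarrow \mathcal{C}^{0,1}(\R;\R^d)$ continuously, we also have $\norm{f}_{\mathrm{Lip}} \le C\norm{f}_V \le C\norm{(\bar w,f)}_X$.

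For part~\eqref{item: Assumption on G - exponential bound} (the exponential, in fact linear/quadratic, growth bound): for each $j$, estimate $\abs{(\mathcal{G}(\bar w,f))_j}$ by pulling the $x$-integral inside and splitting $g_j(S_t(\bar w,f)) = (g_j(S_t(\bar w,f)) - g_j(0)) + g_j(0)$. On the first term use $\abs{g_j(a) - g_j(0)} \le \norm{g_j}_{\mathcal{C}^1}\abs{a}$ together with $\norm{S_t(\bar w,f)}_{\mathrm{L}^1(\R^d)} \le \norm{\bar w}_{\mathrm{L}^1(\R^d)}$ from~\eqref{eqn: L^1 bound of the ES}; on the second term $\abs{g_j(0)}$ is a constant and $\psi_j \in \mathrm{L}^1(\R^d\times(0,T))$. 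This yields $\abs{(\mathcal{G}(\bar w,f))_j} \le C(1 + \norm{\bar w}_{\mathrm{L}^1(\R^d)}) \le C(1 + \norm{(\bar w,f)}_X)$, and since $t \le \exp(t) \le \exp(\eps\norm{(\bar w,f)}_X^2 + M(\eps))$ for a suitable $M(\eps)$ (a routine elementary inequality, absorbing the linear term into the exponential-quadratic one), \Cref{Assumption on G}~\eqref{item: Assumption on G - exponential bound} follows after passing to the covariance-weighted norm $\abs{\cdot}_\Gamma$, which is equivalent to $\abs{\cdot}$ on $\R^m$.

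For part~\eqref{item: Assumption on G - Lipshitz continuity in u} (local Lipschitz continuity): fix $r>0$ and take $\norm{(\bar w,f)}_X, \norm{(\hat w,g)}_X < r$. For each $j$, write
\begin{align*}
    \abs{(\mathcal{G}(\bar w,f))_j - (\mathcal{G}(\hat w,g))_j}
    &\le \int_0^T\int_{\R^d} \abs{\psi_j(x,t)}\, \abs{g_j(S_t(\bar w,f)(x)) - g_j(S_t(\hat w,g)(x))}\diff x\diff t\\
    &\le \norm{g_j}_{\mathcal{C}^1} \int_0^T \norm{\psi_j(\cdot,t)}_{\mathrm{L}^\infty(\R^d)} \norm{S_t(\bar w,f) - S_t(\hat w,g)}_{\mathrm{L}^1(\R^d)}\diff t,
\end{align*}
using the mean value theorem for $g_j$ and Hölder's inequality in $x$. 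Now apply~\eqref{eqn: Lipschitz continuity of the solution operator (in f and u_0)}: the right-hand side is bounded by $\norm{\bar w - \hat w}_{\mathrm{L}^1(\R^d)} + T\min(\mathrm{TV}(\bar w),\mathrm{TV}(\hat w))\norm{f-g}_{\mathrm{Lip}} \le \norm{\bar w - \hat w}_{\mathrm{L}^1(\R^d)} + CTr\norm{f-g}_V$, and both summands are dominated by $\norm{(\bar w,f)-(\hat w,g)}_X$ up to a constant depending on $r$ and $T$. Since $\psi_j \in \mathrm{L}^1(0,T;\mathrm{L}^\infty(\R^d))$, the $t$-integral of $\norm{\psi_j(\cdot,t)}_{\mathrm{L}^\infty}$ is finite, giving the desired bound $\abs{\mathcal{G}(\bar w,f) - \mathcal{G}(\hat w,g)}_\Gamma \le K(r)\norm{(\bar w,f)-(\hat w,g)}_X$. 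The well-posedness conclusion is then immediate from \Cref{cor: finite data well-posedness}, provided one also notes that $\mu^y \ll \mu_0$ with the stated Radon--Nikodym derivative, which holds by \Cref{thm: Stuart existence of posterior measure} (via \Cref{lem: Assumption 2 implies Assumption 1}).

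I expect the only mildly delicate point to be the bookkeeping that turns the linear-in-$\norm{u}_X$ bound from part~(i) into the required exponential-quadratic form with a constant $M$ depending only on $\eps$ — this is the elementary observation that $a t \le \exp(\eps t^2 + M(\eps,a))$ for all $t \ge 0$ — and making sure the embedding $V \hookrightarrow \mathcal{C}^{0,1}$ is invoked so that $\norm{f-g}_{\mathrm{Lip}}$ is controlled by the $X$-norm; everything else is a direct assembly of the stability estimate~\eqref{eqn: Lipschitz continuity of the solution operator (in f and u_0)}, the $\mathrm{L}^1$-bound~\eqref{eqn: L^1 bound of the ES} on $S_t$, and the integrability hypotheses on $\psi_j$ and $g_j$.
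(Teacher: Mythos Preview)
Your proposal is correct and follows essentially the same route as the paper's proof: split $g_j(S_t(\bar w,f))$ around $g_j(0)$ for the growth bound, then use the solution-operator stability estimate~\eqref{eqn: Lipschitz continuity of the solution operator (in f and u_0)} together with H\"older in $x$ for the local Lipschitz bound. The only minor variation is in part~(i): you pair the $\mathrm{L}^1$ bound~\eqref{eqn: L^1 bound of the ES} on $S_t$ with $\psi_j\in\mathrm{L}^1(0,T;\mathrm{L}^\infty(\R^d))$, whereas the paper pairs the $\mathrm{L}^\infty$ bound~\eqref{eqn: L^infty bound of the ES} on $S_t$ with $\psi_j\in\mathrm{L}^1(\R^d\times(0,T))$; both dualities are available under the stated hypotheses on $\psi_j$, and the subsequent elementary step absorbing a linear bound into the exponential-quadratic form is the same in spirit.
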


\begin{proof}
    It suffices to consider the case $m=1$. Using the $\mathrm{L}^\infty$ bound~\eqref{eqn: L^infty bound of the ES} of the solution operator we find
    \begin{align*}
        \abs{\mathcal{G}(\bar{w},f)} &\leq \int_0^T\int_{\R^d} \abs{\psi(x,t)} \abs{g((S_t(\bar{w},f))(x))} \diff x\diff t\\
        &\leq \int_0^T\int_{\R^d} \abs{\psi(x,t)} \abs{g((S_t(\bar{w},f))(x)) - g(0)} \diff x\diff t + \int_0^T\int_{\R^d} \abs{\psi(x,t)} \abs{g(0)} \diff x\diff t\\
        &\leq \norm{g'}_\infty \norm{\psi}_{\mathrm{L}^1(\R^d\times(0,T))} \max_{0\leq t\leq T}\norm{S_t(\bar{w},f)}_{\mathrm{L}^\infty(\R^d)} + \norm{g}_\infty \norm{\psi}_{\mathrm{L}^1(\R^d\times(0,T))}\\
        &\leq \norm{g'}_\infty \norm{\psi}_{\mathrm{L}^1(\R^d\times(0,T))} \norm{\bar{w}}_{\mathrm{L}^\infty(\R^d)} + \norm{g}_\infty \norm{\psi}_{\mathrm{L}^1(\R^d\times(0,T))}\\
        &\eqqcolon C_1 \norm{\bar{w}}_{\mathrm{L}^\infty(\R^d)} + C_2.
    \end{align*}
    Let now $\eps >0$. Using the estimates $\ln(x)\leq x$ and $\exp(ax)\leq \exp\left(\eps x^2 +\frac{a^2}{\eps}\right)$ we get
    \begin{align*}
        C_1 \norm{\bar{w}}_{\mathrm{L}^\infty(\R^d)} + C_2 &\leq \exp(C_1\norm{\bar{w}}_{\mathrm{L}^\infty(\R^d)}) + C_2\\
        &\leq (1+ C_2) \exp(C_1\norm{\bar{w}}_{\mathrm{L}^\infty(\R^d)})\\
        &\leq (1+ C_2) \exp\left(\eps\norm{\bar{w}}_{\mathrm{L}^\infty(\R^d)}^2 + \frac{C_1^2}{\eps}\right)\\
        &\leq \exp\left(\eps\norm{(\bar{w},f)}_X^2 + \frac{C_1^2}{\eps} + 1 + C_2\right)
    \end{align*}
    which shows that \Cref{Assumption on G}~\eqref{item: Assumption on G - exponential bound} is satisfied. On the other hand, for $r>0$ and $(\bar{w},f),(\hat{w},g)\in X$ with $\norm{(\bar{w},f)}_X,\norm{(\hat{w},g)}_X<r$, because of~\eqref{eqn: Lipschitz continuity of the solution operator (in f and u_0)} we have
    \begin{align*}
        &\abs{\mathcal{G}(\bar{w},f) - \mathcal{G}(\hat{w},g)}\\
        &\leq \int_0^T\int_{\R^d} \abs{\psi(x,t)} \abs{g((S_t(\bar{w},f))(x)) - g((S_t(\hat{w},g))(x))}\diff x\diff t\\
        &\leq \norm{g'}_\infty \int_0^T \norm{\psi(\cdot,t)}_{\mathrm{L}^\infty(\R^d)} \norm{S_t(\bar{w},f) - S_t(\hat{w},g)}_{\mathrm{L}^1(\R^d)}\diff t\\
        &\leq \norm{\psi}_{\mathrm{L}^1(0,T;\mathrm{L}^\infty(\R^d))} \norm{g'}_\infty \left(\norm{\bar{w}-\hat{w}}_{\mathrm{L}^1(\R^d)} + T\min(\mathrm{TV}(\bar{w}),\mathrm{TV}(\hat{w}))\norm{f-g}_{\mathrm{Lip}}\right)\\
        &\leq \norm{\psi}_{\mathrm{L}^1(0,T;\mathrm{L}^\infty(\R^d))} \norm{g'}_\infty \max(1,CTr) \norm{(\bar{w},f)-(\hat{w},g)}_X
    \end{align*}
    such that \Cref{Assumption on G}~\eqref{item: Assumption on G - Lipshitz continuity in u} is satisfied.
\end{proof}

Using the finite volume method~\eqref{eqn: Finite volume method}, we can define an approximation to $\mathcal{G}$ by replacing the solution operator $S$ in~\eqref{eqn: observation operator} by the numerical solution operator $S^\Dx$,
\begin{equation}
    \big(\mathcal{G}^\Dx(\bar{w},f)\big)_j = \int_0^T\int_{\R^d} \psi_j(x,t) g_j\big((S^\Dx_t(\bar{w},f))(x)\big)\diff x\diff t,\qquad j=1,\ldots,m.
    \label{eqn: approximation of observation operator}
\end{equation}
\begin{lemma}\label{lem: approximation satisfies assumption on approximation}
    The approximation $\mathcal{G}^\Dx$ defined in~\eqref{eqn: approximation of observation operator} of the observation operator $\mathcal{G}$ defined in~\eqref{eqn: observation operator} satisfies~\eqref{eqn: bound for the approximation error in G} in \Cref{cor: finite data approximation} with $\Psi(\Dx^{-1})=\sqrt{\Dx}$.
\end{lemma}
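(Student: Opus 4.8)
The plan is to follow the same two-step pattern used in the proof of \Cref{lem: observation operator satisfies assumptions on G}, but now comparing $\mathcal{G}$ with $\mathcal{G}^\Dx$ via the Kuznetsov convergence rate estimate~\eqref{eqn: convergence rate} in place of the stability estimate~\eqref{eqn: Lipschitz continuity of the solution operator (in f and u_0)}. As there, it suffices to treat the case $m=1$, and I write $\mathcal{G}$, $\mathcal{G}^\Dx$, $\psi$, $g$ for the single components, recalling that $g\in\mathcal{C}^1(\R;\R^d)$ with $\norm{g}_{\mathcal{C}^1(\R;\R^d)}<\infty$ and $\psi\in\mathrm{L}^1(\R^d\times(0,T))\cap\mathrm{L}^1(0,T;\mathrm{L}^\infty(\R^d))$.

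First I would estimate the difference of the observables pointwise in $u=(\bar w,f)\in X$. Using the Lipschitz bound $\abs{g(a)-g(b)}\leq\norm{g'}_\infty\abs{a-b}$ and Fubini, one gets
\[
\abs{\mathcal{G}(\bar w,f)-\mathcal{G}^\Dx(\bar w,f)} \leq \norm{g'}_\infty \int_0^T \norm{\psi(\cdot,t)}_{\mathrm{L}^\infty(\R^d)}\,\norm{S_t(\bar w,f)-S^\Dx_t(\bar w,f)}_{\mathrm{L}^1(\R^d)}\diff t .
\]
Since $(\bar w,f)\in\left(\mathrm{L}^1\cap\mathrm{L}^\infty\cap\mathrm{BV}\right)(\R^d)\times\mathcal{C}^{0,1}(\R;\R^d)$ (using $V\hookrightarrow\mathcal{C}^{0,1}(\R;\R^d)$), the rate estimate~\eqref{eqn: convergence rate} applies uniformly for $0\leq t\leq T$, giving $\norm{S_t(\bar w,f)-S^\Dx_t(\bar w,f)}_{\mathrm{L}^1(\R^d)}\leq C\bigl(\mathrm{TV}(\bar w)+\norm{f}_{\mathrm{Lip}}\mathrm{TV}(\bar w)\bigr)\sqrt{\Dx}$ with $C$ independent of $\Dt,\Dx,\bar w,f$. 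Hence
\[
\abs{\mathcal{G}(\bar w,f)-\mathcal{G}^\Dx(\bar w,f)} \leq C\,\norm{\psi}_{\mathrm{L}^1(0,T;\mathrm{L}^\infty(\R^d))}\norm{g'}_\infty \bigl(\mathrm{TV}(\bar w)+\norm{f}_{\mathrm{Lip}}\mathrm{TV}(\bar w)\bigr)\sqrt{\Dx}.
\]

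Second, I would absorb the polynomial prefactor into the Gaussian-integrable exponential, exactly as in \Cref{lem: observation operator satisfies assumptions on G}. From the embedding $V\hookrightarrow\mathcal{C}^{0,1}(\R;\R^d)$ we have $\norm{f}_{\mathrm{Lip}}\leq C_V\norm{f}_V\leq C_V\norm{(\bar w,f)}_X$, and trivially $\mathrm{TV}(\bar w)\leq\norm{(\bar w,f)}_X$, so $\mathrm{TV}(\bar w)+\norm{f}_{\mathrm{Lip}}\mathrm{TV}(\bar w)\leq\norm{(\bar w,f)}_X+C_V\norm{(\bar w,f)}_X^2$. Given $\eps>0$, the elementary bounds $t\leq 1+t^2$ and $t^2\leq\frac1\eps\exp(\eps t^2)$ (the latter from $\exp(x)\geq x$), together with $\exp(\eps t^2)\geq 1$, show that this quadratic in $t=\norm{(\bar w,f)}_X$ is dominated by $K'(\eps)\exp\!\left(\eps\norm{(\bar w,f)}_X^2\right)$ for a suitable $K'(\eps)>0$. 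Setting $\Psi(\Dx^{-1})=\sqrt{\Dx}$, this yields~\eqref{eqn: bound for the approximation error in G}, and $\Psi(N)\to 0$ as $N=\Dx^{-1}\to\infty$.

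I do not anticipate a real obstacle here, since both substantive ingredients---the Kuznetsov rate and the absorption of polynomial growth into the exponential---are already available. The only points meriting a word of care are that the constant in~\eqref{eqn: convergence rate} is uniform in $\Dt,\Dx,\bar w,f$ (hence the error bound is uniform in $N$, as required by \Cref{cor: finite data approximation}) and that the standing CFL restriction coupling $\Dt$ to $\Dx$ through the fixed ratio $\lambda$ is compatible with letting $\Dx\to 0$ along the family defining $\mathcal{G}^\Dx$; both are guaranteed by the assumptions on the finite volume scheme.
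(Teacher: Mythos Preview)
Your proposal is correct and follows essentially the same approach as the paper: both arguments first bound $\abs{\mathcal{G}(\bar w,f)-\mathcal{G}^\Dx(\bar w,f)}$ via the Lipschitz property of $g$ and the Kuznetsov rate~\eqref{eqn: convergence rate} to obtain a factor $\mathrm{TV}(\bar w)+\norm{f}_{\mathrm{Lip}}\mathrm{TV}(\bar w)\leq C(\norm{(\bar w,f)}_X+\norm{(\bar w,f)}_X^2)$, and then absorb this quadratic into $K'(\eps)\exp(\eps\norm{(\bar w,f)}_X^2)$. The only cosmetic difference is the choice of elementary inequalities for the absorption step---the paper uses $\ln x\leq x$ and $\exp(ax)\leq\exp(\eps x^2+a^2/\eps)$ where you use $t\leq 1+t^2$ and $t^2\leq\eps^{-1}\exp(\eps t^2)$---but the substance is identical.
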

\begin{proof}
    Let $\eps>0$. Using the convergence rate estimate~\eqref{eqn: convergence rate} as well as the estimates $\ln(x)\leq x$ and $\exp(ax)\leq \exp\left(\eps x^2 +\frac{a^2}{\eps}\right)$ we find
    \begin{align*}
        \abs{\mathcal{G}(\bar{w},f)-\mathcal{G}^\Dx(\bar{w},f)} &\leq \norm{g'}_\infty\int_0^T \norm{\psi(\cdot,t)}_{\mathrm{L}^\infty(\R^d)} \norm{S_t(\bar{w},f) - S^\Dx_t(\bar{w},f)}_{\mathrm{L}^1(\R^d)} \diff t\\
        &\leq \norm{g'}_\infty \norm{\psi}_{\mathrm{L}^1(0,T;\mathrm{L}^\infty(\R^d))} C \left(\mathrm{TV}(\bar{w}) + \norm{f}_{\mathrm{Lip}}\mathrm{TV}(\bar{w})\right) \sqrt{\Dx}\\
        &\leq C \left(\norm{(\bar{w},f)}_X + \norm{(\bar{w},f)}_X^2\right) \sqrt{\Dx}\\
        &\leq C \exp\left(\ln(\norm{(\bar{w},f)}_X) + \ln(\norm{(\bar{w},f)}_X +1)\right)\sqrt{\Dx}\\
        &\leq C \exp\left(2\norm{(\bar{w},f)}_X +1\right)\sqrt{\Dx}\\
        &\leq C \exp\left(\eps\norm{(\bar{w},f)}_X^2 + \frac{4}{\eps} +1\right)\sqrt{\Dx}\\
        &= C \exp\left( \frac{4}{\eps} +1\right) \exp\left(\eps\norm{(\bar{w},f)}_X^2\right)\sqrt{\Dx}.
    \end{align*}
\end{proof}

\begin{remark}
    Note that the assertion of \Cref{lem: approximation satisfies assumption on approximation} also holds for $\psi_j=\delta_T$, $j=1,\ldots,m$, where $\delta_T$ is the Dirac delta function. In that case $\mathcal{G}^\Dx$ takes the form
    \begin{equation*}
        \big(\mathcal{G}^\Dx(\bar{w},f)\big)_j = \int_{\R^d} g_j\big((S^\Dx_T(\bar{w},f))(x)\big)\diff x,\qquad j=1,\ldots,m,
    \end{equation*}
    which we will use in \Cref{sec: numerical experiments} for our numerical experiments.
\end{remark}

\subsection{Scalar conservation laws with discontinuous flux in one dimension}
As a second application, we consider the Cauchy problem for scalar conservation laws with discontinuous flux of the form
\begin{gather}
    \begin{aligned}
        w_t + f(k(x),w)_x =0,& &&(x,t)\in \R\times(0,T),\\
        w(x,0) = \bar{w}(x),& &&x\in \R
    \end{aligned}
    \label{eqn: conservation law discontinuous flux}
\end{gather}
where the flux is strictly increasing in $w$ and has a possibly discontinuous spatial dependency through the coefficient $k$.

Note that if the spatial dependency coefficient $k$ is piecewise constant with finitely many discontinuities we effectively consider standard conservation laws where the flux function changes across finitely many points in space. In particular, this includes the important so-called two-flux case
\begin{equation*}
    w_t + \left( H(x)f(w) + (1-H(x))g(w) \right)_x =0
\end{equation*}
where $H$ is the Heaviside function.

\subsubsection{Adapted entropy solutions}
We assume that the flux is strictly increasing in $w$ and consider solutions in the sense of adapted entropy solutions (see \cite{BAITI1997161,audusse2005uniqueness}). To that end, we define for $p\in\R$ the function $c_p\colon\R\to\R$ through the equation
\begin{equation*}
    f(k(x),c_p(x))= p\qquad\text{for all }x\in\R.
\end{equation*}
This equation has a unique solution for each $x\in\R$ since the flux is strictly increasing in $w$.
\begin{definition}[{\cite{BAITI1997161,audusse2005uniqueness}}]\label{def: adapted entropy solution}
    We call a function $w\in\mathrm{L}^\infty(\R\times(0,T))\cap\mathcal{C}([0,T];\mathrm{L}^1(\R))$ an adapted entropy solution of~\eqref{eqn: conservation law discontinuous flux} if for all $p\in\R$
    \begin{multline*}
        \int_0^T\int_\R \left( \abs{w-c_p(x)}\varphi_t + \sign(w-c_p(x))(f(k(x),w) - f(k(x),c_p(x)))\varphi_x \right)\diff x\diff t \\
        + \int_\R \abs{\bar{w}(x)-c_p(x)}\varphi(x,0)\diff x \geq 0
    \end{multline*}
    for all nonnegative $\varphi\in\mathcal{C}_c^\infty(\R\times[0,T))$.
\end{definition}
Since stability results for~\eqref{eqn: conservation law discontinuous flux} with respect to the modeling parameters $\bar{w}$, $k$, and $f$ are only available under the assumption that $k$ is piecewise constant with finitely many discontinuities, we will restrict the exposition to that case from this point on. However, we want to remark that more general results regarding existence and uniqueness of adapted entropy solutions are available in the literature and we refer the reader to \cite{TOWERS20205754,Piccoli/Tournus,audusse2005uniqueness}.
\begin{theorem}\label{thm: existence of adopted entropy solutions}
    %
    %
    Let $f\in\mathcal{C}^2(\R^2;\R)$ be strictly increasing in $w$ in the sense that $f_w\geq \alpha >0$, and assume that $f(k^*,0)=0$ for all $k^*\in\R$. Let further $k$ be piecewise constant with finitely many discontinuities and $\bar{w}\in(\mathrm{L}^\infty\cap\mathrm{BV})(\R)$.
    Then there exists a unique entropy solution $w$ of~\eqref{eqn: conservation law discontinuous flux} and the solution operator $S_t$ given by
    \begin{equation*}
        S_t \bar{w} = w(\cdot,t)
    \end{equation*}
    satisfies
    \begin{romannum}
        \item For all $0\leq t\leq T$
            \begin{align*}
                \norm{S_t \bar{w}}_{\mathrm{L}^1(\R)} &\leq \norm{\bar{w}}_{\mathrm{L}^1(\R)},\\
                \norm{S_t \bar{w}}_{\mathrm{L}^\infty(\R)} &\leq \frac{C_f}{\alpha}\norm{\bar{w}}_{\mathrm{L}^\infty(\R)},
                \shortintertext{and}
                \mathrm{TV} (S_t \bar{w}) &\leq C(\mathrm{TV}(\bar{w}) + \mathrm{TV}(k))
            \end{align*}
            where $C_f$ denotes the maximal Lipschitz constant of $f$.
        \item For all $x\in\R$
            \begin{equation*}
                \mathrm{TV}_{[0,T]}(t\mapsto (S_t \bar{w})(x)) \leq C\mathrm{TV}(\bar{w}).
            \end{equation*}
    \end{romannum}
\end{theorem}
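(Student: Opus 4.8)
The plan is to realise the solution as the limit of a monotone finite-volume scheme adapted to the (finitely many) discontinuities of $k$, to read off the three a priori bounds uniformly in the mesh size, and then to invoke a doubling-of-variables argument for adapted entropy solutions to obtain uniqueness. I would start by labelling the jump points $\xi_1<\dots<\xi_M$ of $k$ and the constant values $k_0,\dots,k_M$ that $k$ takes on the complementary intervals. Away from the $\xi_j$ the equation is a classical scalar conservation law with the smooth, strictly increasing flux $f(k_j,\cdot)$, while at each $\xi_j$, because $f$ is strictly increasing in $w$, the adapted entropy condition of \Cref{def: adapted entropy solution} collapses to continuity of the flux trace $f(k(\xi_j^-),w(\xi_j^-,t)) = f(k(\xi_j^+),w(\xi_j^+,t))$. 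I would therefore run a standard monotone three-point scheme (Godunov or Engquist--Osher) that, at the mesh interfaces sitting on the $\xi_j$, uses the interface-adapted numerical flux enforcing this connection, exactly as in \cite{TOWERS20205754,BadwaikRuf2020}, and denote the approximations by $w^\Dx$. Such a scheme is monotone and (locally) conservative, hence $\mathrm{L}^1$-contractive, and it preserves the ordering of $w^\Dx$ with the stationary states $c_p$; in the limit this yields both a comparison principle against the $c_p$ and $\mathrm{L}^1$ contraction.

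The two easy bounds follow from the observation that, since $f(k^*,0)=0$, the constant $0$---and more generally each $c_p$---is a \emph{stationary} adapted entropy solution. Comparing $S_t\bar w$ with the zero solution gives $\norm{S_t\bar w}_{\mathrm{L}^1(\R)} = \int_\R\abs{S_t\bar w-0}\diff x \le \int_\R\abs{\bar w-0}\diff x = \norm{\bar w}_{\mathrm{L}^1(\R)}$. For the $\mathrm{L}^\infty$ bound I would set $R = \norm{\bar w}_{\mathrm{L}^\infty(\R)}$ and $p_\pm = \pm C_f R$: from $f_w\ge\alpha$ and $f(k^*,0)=0$ one has $\alpha\abs{c_p(x)}\le\abs{f(k(x),c_p(x))} = \abs{p}$, so $\abs{c_{p_\pm}(x)}\le C_f R/\alpha$ for every $x$, while $f(k^*,R)\le C_f R = p_+$ forces $c_{p_+}(x)\ge R$ and symmetrically $c_{p_-}(x)\le -R$; thus $\bar w$ lies between $c_{p_-}$ and $c_{p_+}$, the comparison principle propagates this to $S_t\bar w$, and $\norm{S_t\bar w}_{\mathrm{L}^\infty(\R)}\le C_f R/\alpha = (C_f/\alpha)\norm{\bar w}_{\mathrm{L}^\infty(\R)}$.

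The total-variation estimate in (i) is the delicate point and the one I expect to be the main obstacle. Between consecutive interfaces the scheme is TV-diminishing, just as classically. Crossing an interface $\xi_j$, the flux-continuity connection forces $w$ to jump by at most $C\abs{k(\xi_j^+)-k(\xi_j^-)}$, which is an implicit-function-theorem/mean-value estimate using $f\in\mathcal{C}^2$ and $f_w\ge\alpha$; summing over the finitely many interfaces produces the $C\,\mathrm{TV}(k)$ term. What remains is to show that waves impinging on the standing interface discontinuities cannot generate extra variation, and for this I would pass to the flux variable $\Psi^\Dx := f(k(x),w^\Dx)$, which is \emph{continuous in $x$} across every $\xi_j$ and whose discrete total variation is equivalent to that of $w^\Dx$ up to the factor $C_f/\alpha$ (by $\alpha\le f_w\le C_f$), and track a discrete Temple-type functional---a singular mapping in the spirit of \cite{TOWERS20205754}---that is non-increasing in time for $\Psi^\Dx$ in smooth regions and only picks up the one-time cost at the interfaces. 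This gives $\mathrm{TV}(w^\Dx(\cdot,t))\le C(\mathrm{TV}(\bar w)+\mathrm{TV}(k))$ uniformly in $\Dx$ and $t$.

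For the temporal estimate (ii) I would telescope the scheme at a fixed cell $x_i$, $\mathrm{TV}_{[0,T]}(t\mapsto w^\Dx(x_i,t))\le\sum_n\abs{w_i^{n+1}-w_i^n}\le\lambda\sum_n\abs{F_{i+\hf}^n-F_{i-\hf}^n}$, and observe that since the interface-induced jumps are stationary in time this sum is controlled, via monotonicity and the spatial estimate, by $\mathrm{TV}(\bar w)$ alone, the time-independent $\mathrm{TV}(k)$ contribution dropping out. With the uniform $\mathrm{L}^1$, $\mathrm{L}^\infty$ and spatial-$\mathrm{TV}$ bounds together with this time regularity, Helly's theorem and a diagonal argument give a subsequence of $w^\Dx$ converging in $\mathrm{L}^1_{\mathrm{loc}}(\R\times(0,T))$ to a limit $w$ inheriting all the stated bounds, and the discrete adapted entropy inequalities (Kuznetsov-type, cf.\ \cite{kuznetsov76}) pass to the limit to show that $w$ solves~\eqref{eqn: conservation law discontinuous flux} in the sense of \Cref{def: adapted entropy solution}. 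Uniqueness---hence independence of the constructed solution from the scheme---follows from the doubling-of-variables argument of Audusse--Perthame adapted to the reference states $c_p$ \cite{audusse2005uniqueness,BAITI1997161}, which is also what justifies the $\mathrm{L}^1$ contraction used above.
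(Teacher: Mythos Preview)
Your proposal is substantially more detailed than the paper's own proof, which is purely citational: the paper dispatches existence and uniqueness to Baiti--Jenssen~\cite{BAITI1997161} (a front-tracking construction for the associated $2\times 2$ system), and attributes the $\mathrm{L}^1$, $\mathrm{L}^\infty$, and $\mathrm{TV}$ bounds to \cite[Thm.~4.1]{ruf2021fluxstability}, \cite[Thm.~1.4]{TOWERS20205754}, and \cite[Lem.~4.6]{BadwaikRuf2020} respectively, without reproducing any of the arguments. Your route---finite-volume compactness for existence and the bounds, Audusse--Perthame doubling for uniqueness---is a correct alternative and in fact mirrors the content of the very references the paper cites (the scheme you describe is precisely the one from~\cite{BadwaikRuf2020} that the paper uses later in~\eqref{eqn: FVM discontinuous flux}). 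Your derivations of the $\mathrm{L}^1$ and $\mathrm{L}^\infty$ bounds via comparison with $0$ and the stationary $c_{p_\pm}$ are clean and correct, and the flux-variable/singular-mapping strategy for the spatial $\mathrm{TV}$ bound is the right idea and is exactly how \cite{TOWERS20205754,BadwaikRuf2020} proceed.

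The one place where your sketch is thin is the temporal estimate (ii). The telescoping bound $\sum_n\abs{w_i^{n+1}-w_i^n}\le\lambda\sum_n\abs{F_{i+\hf}^n-F_{i-\hf}^n}$ is correct, but the assertion that this is controlled by $\mathrm{TV}(\bar w)$ ``via monotonicity and the spatial estimate'' hides the real work: a naive bound on the right-hand side picks up a factor $T/\Dt$. The argument that actually closes here exploits that the flux is strictly increasing, so all discrete characteristics move in one direction and each initial wave can cross a fixed spatial cell at most once; equivalently, one tracks $\sum_n\abs{\Psi_i^{n}-\Psi_{i-1}^{n}}$ in the flux variable and uses that the upwind update transports variation to the right without amplification. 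This is indeed how \cite[Lem.~4.6]{BadwaikRuf2020} obtains the bound, so your plan lands in the right place, but you should flag that this step needs the monotonicity of $f$ in $w$ in an essential way and is not a generic consequence of the spatial $\mathrm{TV}$ estimate.
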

\begin{proof}
    The existence and uniqueness statement follows from the theory developed by Baiti and Jenssen~\cite{BAITI1997161}. The $\mathrm{L}^1$, $\mathrm{L}^\infty$, and $\mathrm{TV}$ bounds follow from~\cite[Thm.~4.1]{ruf2021fluxstability},~\cite[Thm.~1.4]{TOWERS20205754}, and~\cite[Lem.~4.6]{BadwaikRuf2020} respectively.
\end{proof}

Similarly to before, we will denote the solution operator by $S_t (\bar{w},k,f)$ to highlight the dependence on $k$ and $f$ as well. We have the following Lipschitz continuity result.

\begin{theorem}[{\cite[Thm.~4.1]{ruf2021fluxstability}}]
    Let $f$ and $g$ be flux functions satisfying the assumptions of \Cref{thm: existence of adopted entropy solutions}, $k$ and $l$ be piecewise constant functions with finitely many discontinuities and $\bar{w},\hat{w}\in(\mathrm{L}^\infty\cap\mathrm{BV})(\R)$. Then the solution operator satisfies
    \begin{equation}
        \norm{S_t(\bar{w},k,f) - S_t(\hat{w},l,g)}_{\mathrm{L}^1(\R)} \leq \norm{\bar{w}-\hat{w}}_{\mathrm{L}^1(\R)} + C\left(\norm{k-l}_{\mathrm{L}^\infty(\R)} + \norm{f_w-g_w}_{\mathrm{L}^\infty(\R^2;\R)} \right).
        \label{eqn: stability estimate discontinuous flux}
    \end{equation}
    for every $0\leq t\leq T$.
\end{theorem}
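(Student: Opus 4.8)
Here is how I would approach the estimate.

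The plan is to bound the difference by the triangle inequality,
\[
\norm{S_t(\bar w,k,f)-S_t(\hat w,l,g)}_{\mathrm{L}^1(\R)} \le \norm{S_t(\bar w,k,f)-S_t(\hat w,k,f)}_{\mathrm{L}^1(\R)} + \norm{S_t(\hat w,k,f)-S_t(\hat w,l,g)}_{\mathrm{L}^1(\R)},
\]
and to handle the two terms separately. In the first term the coefficient $k$ and the flux $f$ are fixed, so the classical comparison principle for adapted entropy solutions (\Cref{def: adapted entropy solution}; see \cite{BAITI1997161,audusse2005uniqueness}) gives the $\mathrm{L}^1$-contraction $\norm{S_t(\bar w,k,f)-S_t(\hat w,k,f)}_{\mathrm{L}^1(\R)}\le\norm{\bar w-\hat w}_{\mathrm{L}^1(\R)}$. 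Everything then reduces to the flux-stability estimate
\[
\norm{S_t(\hat w,k,f)-S_t(\hat w,l,g)}_{\mathrm{L}^1(\R)} \le C\big(\norm{k-l}_{\mathrm{L}^\infty(\R)}+\norm{f_w-g_w}_{\mathrm{L}^\infty(\R^2)}\big),
\]
where $C$ may depend on $T$, $\alpha$, the $\mathcal{C}^2$-norms of $f,g$ on the relevant compact set, and on $\hat w$, $k$, $l$ through $\mathrm{TV}(\hat w)$, $\mathrm{TV}(k)$, $\mathrm{TV}(l)$, $\norm{\hat w}_{\mathrm{L}^1\cap\mathrm{L}^\infty}$, and the number of discontinuities of $k,l$.

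First I would collect the a priori bounds of \Cref{thm: existence of adopted entropy solutions}: both $w\coloneqq S_t(\hat w,k,f)$ and $v\coloneqq S_t(\hat w,l,g)$ are uniformly bounded in $\mathrm{L}^1$, $\mathrm{L}^\infty$, and $\mathrm{BV}$, with bounded variation in time, so all estimates may be localized to a compact set $K\subset\R$ containing the ranges of $w$ and $v$, on which $f,g$ and their first two derivatives are bounded and Lipschitz. Since $k$ takes only finitely many values, $f(k(\cdot),\cdot)$ is Lipschitz in its first slot with some constant $L$ on $K$, and from $f(k(x),c_p^k(x))=p=g(l(x),c_p^l(x))$, $f(k^*,0)=g(k^*,0)=0$, and $f_w\ge\alpha>0$ one obtains the pointwise control
\[
\alpha\,\abs{c_p^k(x)-c_p^l(x)} \le \abs{f(k(x),c_p^l(x))-g(l(x),c_p^l(x))} \le L\,\norm{k-l}_{\mathrm{L}^\infty(\R)} + \abs{c_p^l(x)}\,\norm{f_w-g_w}_{\mathrm{L}^\infty(\R^2)},
\]
with $\abs{c_p^l(x)}$ bounded on $K$ for the relevant $p$. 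Finally, by inserting trivial jumps, I would assume $k$ and $l$ are constant on a common finite partition $\R=\bigcup_i(\xi_{i-1},\xi_i)$.

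On each strip $(\xi_{i-1},\xi_i)\times(0,T)$ the coefficients are constant, so $w$ and $v$ are \emph{ordinary} Kruzhkov entropy solutions there, for the $x$-independent fluxes $f_i\coloneqq f(k_i,\cdot)$ and $g_i\coloneqq g(l_i,\cdot)$. The core computation is the doubling of variables: applying the Kruzhkov entropy inequalities for $w$ in $(x,t)$ and $v$ in $(y,s)$ against a mollifier $\rho_\delta(x-y)\theta_\delta(t-s)$ times a localization in the slow variables, adding them, and sending $\delta\to0$, one recovers the standard $\mathrm{L}^1$-dissipation together with two kinds of residual terms: (i) volume terms proportional to $\sup_K\abs{f_i-g_i}\le L\norm{k-l}_{\mathrm{L}^\infty(\R)}+C\norm{f_w-g_w}_{\mathrm{L}^\infty(\R^2)}$, each weighted by an $\mathrm{L}^1$ factor controlled by $\mathrm{TV}(w)$ and $\mathrm{TV}(v)$; and (ii) boundary terms at the interfaces $\xi_i$ produced by the integration by parts on a bounded strip. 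A Gronwall argument in the slow time variable then converts the strip-wise inequalities into the claimed bound, \emph{provided} the interface terms are under control. The main obstacle is precisely the treatment of the finitely many interfaces: there the fluxes jump and the boundary terms in (ii) are not signed on their own. The resolution is to use that an adapted entropy solution satisfies at each $\xi_i$ both the Rankine--Hugoniot relation and the adapted entropy jump inequalities relative to all stationary states $c_p$, which pins down the admissible pairs of one-sided traces $\big(w(\xi_i^-,t),w(\xi_i^+,t)\big)$ and, likewise, $\big(v(\xi_i^-,t),v(\xi_i^+,t)\big)$; pairing these trace conditions and exploiting the Lipschitz dependence of the associated "connection" on the fluxes, one shows the net contribution of each interface is bounded by $C\big(\abs{k(\xi_i^-)-l(\xi_i^-)}+\abs{k(\xi_i^+)-l(\xi_i^+)}+\norm{f_w-g_w}_{\mathrm{L}^\infty(\R^2)}\big)$ times the temporal variation of the traces at $\xi_i$, which is finite by the time-regularity bound of \Cref{thm: existence of adopted entropy solutions}; summing over the finitely many interfaces yields the stated right-hand side. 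Equivalently, one may package the whole computation as a Kuznetsov-type lemma: regard $w=S_t(\hat w,k,f)$ as an approximate adapted entropy solution for the parameters $(l,g)$ with residual measured by $\norm{k-l}_{\mathrm{L}^\infty(\R)}$ and $\norm{f_w-g_w}_{\mathrm{L}^\infty(\R^2)}$, and invoke the quantitative stability estimate for adapted entropy solutions; the interface bookkeeping reappears there in the same form.
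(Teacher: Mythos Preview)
The paper does not contain a proof of this theorem: it is quoted verbatim from \cite[Thm.~4.1]{ruf2021fluxstability} and used as a black box in the subsequent well-posedness analysis. There is therefore no ``paper's own proof'' to compare your proposal against.

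That said, your outline is a plausible sketch of how the result in \cite{ruf2021fluxstability} is obtained: the triangle-inequality splitting into an $\mathrm{L}^1$-contraction term and a flux-stability term, the reduction to constant-coefficient strips between the finitely many interfaces, the Kruzhkov doubling-of-variables argument on each strip producing residuals proportional to $\norm{k-l}_{\mathrm{L}^\infty}$ and $\norm{f_w-g_w}_{\mathrm{L}^\infty}$, and the control of the interface contributions via the adapted entropy conditions and the temporal $\mathrm{BV}$ bound of \Cref{thm: existence of adopted entropy solutions}. The Kuznetsov-type reformulation you mention at the end is indeed the way the cited paper packages the argument. Your sketch is at the right level of detail for a proof \emph{outline}, but note that the interface analysis---showing that the net boundary contribution at each $\xi_i$ has the correct sign up to the claimed error---is where the genuine work lies, and your proposal gestures at this without carrying it out; a full proof requires the explicit trace computations and the monotonicity structure coming from $f_w\ge\alpha>0$.
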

Note that the constant $C$ in~\eqref{eqn: stability estimate discontinuous flux} depends linearly on (products of) the $\mathrm{L}^\infty$ and $\mathrm{TV}$ norms of $\bar{w}$ and $\hat{w}$, the Lipschitz constants of $f$ and $g$ and the maximum number of discontinuities in $k$ and $l$.

\subsubsection{Finite volume methods}
We will now present a class of finite volume methods for~\eqref{eqn: conservation law discontinuous flux} introduced in~\cite{BadwaikRuf2020}.
As before, we discretize the domain $\R\times[0,T]$ using the spatial and temporal grid discretization parameters $\Dx$ and $\Dt$. The resulting grid cells we denote by $\cell_j=(x_{j+\hf},x_{j-\hf})$ in space and $[t^n,t^{n+1})$ in time for points $x_{j+\hf}$, such that $x_{j+\hf}-x_{j-\hf}=\Dx$, $j\in\Z$, and $t^n = n\Dt$ for $n=0,\ldots,M+1$.

For a given coefficient $k$ we denote by $\xi_i$, $i=1,\ldots,N$, its discontinuities and by $D_i=(\xi_i,\xi_{i+1})$, $i=0,\ldots,N$, the subdomains where $k$ is constant. Here we have used the notation $\xi_0=-\infty$ and $\xi_{N+1}=+\infty$. Furthermore, we will write
\begin{equation*}
    f^{(i)} = f(k(x),\cdot),\qquad \text{for }x\in D_i,\ i=0,\ldots,N.
\end{equation*}

In the following, we will assume that the grid is aligned in such a way that all discontinuities of $k$ lie on cell interfaces, i.e., $\xi_i=x_{P_i -\hf}$ for some integers $P_i$, $i=1,\ldots,N$. In general, this can be achieved by considering a globally nonuniform grid that is uniform on each $D_i$ and taking $\Dx = \max_{i=0,\ldots,N} \Dx_i$ where $\Dx_i$ is the grid discretization parameter in $D_i$.

The finite volume method we consider is the following \cite{BadwaikRuf2020}:
\begin{gather}
  \begin{aligned}
    w_j^{n+1} = w_j^n - \lambda\left( f^{(i)}(w_j^n) - f^{(i)}(w_{j-1}^n) \right),& &&n\geq 0,\ P_i < j < P_{i+1},\ 0\leq i\leq N,\\
    w_{P_i}^{n+1} = \left(f^{(i)}\right)^{-1}\left(f^{(i-1)}\left(w_{P_i -1}^{n+1}\right)\right),& &&n\geq 0,\ 0<i\leq N,\\
    w_j^0 = \frac{1}{\Dx}\int_{\cell_j} \bar{w}(x)\diff x,& &&j\in\Z,
  \end{aligned}
  \label{eqn: FVM discontinuous flux}
\end{gather}
where $P_0=-\infty$, $P_{N+1}=+\infty$, and $\lambda = \Dt/\Dx$. We assume that the grid discretization parameters satisfy the following CFL condition:
\begin{equation}
  \lambda\max_i \max_u \left(f^{(i)}\right)'(w) \leq 1.
  \label{eqn: CFL condition}
\end{equation}
Note that the definition of $w_{P_i}^{n+1}$ in~\eqref{eqn: FVM discontinuous flux} represents a discrete version of the Rankine--Hugoniot condition which in the setting of conservation laws with discontinuous flux holds across discontinuities of $k$. Here, we use the ghost cells $\cell_{P_i}$, $i=1,\ldots,N$ to explicitly enforce the Rankine--Hugoniot condition on the discrete level.

We define the numerical solution operator $S_t^\Dx$ by
\begin{equation*}
    (S_t^\Dx (\bar{w},k,f))(x) = w_j^n,\qquad (x,t)\in\cell_j\times[t^n,t^{n+1}).
\end{equation*}
The following lemma shows that the finite volume method is stable in $\mathrm{L}^\infty$ and $\mathrm{L}^1$.
\begin{lemma}[{\cite[Lem.~5.1]{badwaik2020multilevel}}]
  Let $f,k$, and $\bar{w}$ satisfy the assumptions of \Cref{thm: existence of adopted entropy solutions}.
  If the numerical scheme~\eqref{eqn: FVM discontinuous flux} satisfies the CFL condition~\eqref{eqn: CFL condition} we have the following stability estimates:
  \begin{align*}
    \norm{S_t^\Dx(\bar{w},k,f)}_{\mathrm{L}^\infty(\R)} &\leq \frac{C_f}{\alpha}\norm{\bar{w}}_{\mathrm{L}^\infty(\R)}
    \shortintertext{and}
    \norm{S_t^\Dx(\bar{w},k,f)}_{\mathrm{L}^1(\R)} &\leq \norm{\bar{w}}_{\mathrm{L}^1(\R)} + C\mathrm{TV}(\bar{w})\Dx.
  \end{align*}
\end{lemma}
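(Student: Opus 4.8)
The plan is to lean on two structural features of the scheme~\eqref{eqn: FVM discontinuous flux}: away from the discontinuities of $k$ it is a monotone, conservative upwind scheme, and at each ghost cell $P_i$ the update enforces the discrete Rankine--Hugoniot relation $f^{(i)}(w_{P_i}^{n})=f^{(i-1)}(w_{P_i-1}^{n})$, which holds for every $n\ge 1$. First I would record monotonicity: under the CFL condition~\eqref{eqn: CFL condition} the interior map $w_j^n\mapsto w_j^{n+1}$ is nondecreasing in $w_j^n$ (because $0\le\lambda(f^{(i)})'\le1$) and in $w_{j-1}^n$ (because $f^{(i)}$ is increasing), while the ghost-cell update composes the increasing maps $(f^{(i)})^{-1}$ and $f^{(i-1)}$ with the already monotone value $w_{P_i-1}^{n+1}$; hence $w^n\mapsto w^{n+1}$ is monotone and the scheme obeys a comparison principle. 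I would also note that for each $p\in\R$ the grid function equal on $D_i$ to the constant $c_p^{(i)}$, with $f^{(i)}(c_p^{(i)})=p$, is a fixed point of the scheme (the upwind part preserves constants, and the ghost-cell update returns $(f^{(i)})^{-1}(f^{(i-1)}(c_p^{(i-1)}))=(f^{(i)})^{-1}(p)=c_p^{(i)}$); in particular $c_0\equiv 0$ by the assumption $f(k^\ast,0)=0$, so the zero state is preserved and compactly decaying data stay compactly decaying.

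For the $\mathrm{L}^\infty$ estimate, put $B=\norm{\bar w}_{\mathrm{L}^\infty(\R)}$ and take $p=C_fB$. Using $f_w\ge\alpha$ together with $\abs{f^{(i)}(s)-f^{(i)}(0)}\le C_f\abs{s}$ one checks $B\le c_p^{(i)}\le p/\alpha=C_fB/\alpha$ for every region $i$, and symmetrically $-C_fB/\alpha\le c_{-C_fB}^{(i)}\le -B$. Since $\abs{w_j^0}=\Dx^{-1}\abs{\int_{\cell_j}\bar w\,\diff x}\le B$, the initial data lies between these two discrete steady states, so by the comparison principle $w_j^n$ stays between them for all $n$, which is exactly $\norm{S_t^\Dx(\bar w,k,f)}_{\mathrm{L}^\infty(\R)}\le(C_f/\alpha)\norm{\bar w}_{\mathrm{L}^\infty(\R)}$.

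For the $\mathrm{L}^1$ estimate I would compare $w^{n+1}$ with the grid function $\tilde w^{n+1}$ obtained from $w^n$ by applying the conservative upwind scheme at \emph{every} cell, i.e.\ replacing the ghost-cell update by $\tilde w_{P_i}^{n+1}=w_{P_i}^n-\lambda(f^{(i)}(w_{P_i}^n)-f^{(i-1)}(w_{P_i-1}^n))$. This $\tilde w^{n+1}$ comes from a genuine conservative, monotone scheme on $\R$ with single-valued interface fluxes that fixes the zero state, so by the standard $\mathrm{L}^1$-contractivity of monotone conservative schemes $\norm{\tilde w^{n+1}}_{\mathrm{L}^1(\R)}\le\norm{w^n}_{\mathrm{L}^1(\R)}$. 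The discrete Rankine--Hugoniot relation gives $\tilde w_{P_i}^{n+1}=w_{P_i}^n$ for $n\ge1$, so $w^{n+1}$ and $\tilde w^{n+1}$ differ only at the $N$ ghost cells and
\[
  \norm{w^{n+1}}_{\mathrm{L}^1(\R)}\le\norm{w^n}_{\mathrm{L}^1(\R)}+\sum_{i=1}^{N}\abs{w_{P_i}^{n+1}-w_{P_i}^n}\,\Dx .
\]
Since $(f^{(i)})^{-1}$ is $1/\alpha$-Lipschitz, $f^{(i-1)}$ is $C_f$-Lipschitz, and (for $\Dx$ small enough that each $D_i$ contains at least two cells) cell $P_i-1$ is updated by the upwind rule,
\[
  \abs{w_{P_i}^{n+1}-w_{P_i}^n}\le\frac{C_f}{\alpha}\abs{w_{P_i-1}^{n+1}-w_{P_i-1}^n}=\frac{C_f}{\alpha}\lambda\,\abs{f^{(i-1)}(w_{P_i-1}^n)-f^{(i-1)}(w_{P_i-2}^n)}\le\frac{C_f^2}{\alpha}\lambda\,\abs{w_{P_i-1}^n-w_{P_i-2}^n}.
\]
Summing over $0\le n\le M$ and over $i=1,\dots,N$ and using $\lambda\Dx=\Dt$, the accumulated defect is at most $\frac{C_f^2}{\alpha}\Dx\sum_{n}\sum_{i}\lambda\abs{w_{P_i-1}^n-w_{P_i-2}^n}$; the double sum is controlled by the discrete space--time total variation bound (the scheme analogue, valid under~\eqref{eqn: CFL condition}, of the estimate $\mathrm{TV}_{[0,T]}(t\mapsto(S_t\bar w)(x))\le C\mathrm{TV}(\bar w)$ in \Cref{thm: existence of adopted entropy solutions}; see \cite{BadwaikRuf2020}) combined with $f_w\ge\alpha$, which converts $\sum_n\abs{w_j^{n+1}-w_j^n}$ into a bound for $\alpha\lambda\sum_n\abs{w_j^n-w_{j-1}^n}$. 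This yields $\sum_{n,i}\lambda\abs{w_{P_i-1}^n-w_{P_i-2}^n}\le C\mathrm{TV}(\bar w)$, hence
\[
  \norm{w^n}_{\mathrm{L}^1(\R)}\le\norm{w^0}_{\mathrm{L}^1(\R)}+C\mathrm{TV}(\bar w)\,\Dx\le\norm{\bar w}_{\mathrm{L}^1(\R)}+C\mathrm{TV}(\bar w)\,\Dx,
\]
the single step $n=0$ (where the discrete Rankine--Hugoniot relation has not yet taken effect) contributing only an extra $O(N\Dx)\,\norm{\bar w}_{\mathrm{L}^\infty(\R)}\le C\mathrm{TV}(\bar w)\,\Dx$, using $\norm{\bar w}_{\mathrm{L}^\infty(\R)}\le\mathrm{TV}(\bar w)$ for $\bar w\in(\mathrm{L}^1\cap\mathrm{BV})(\R)$.

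The $\mathrm{L}^\infty$ bound is a routine invariant-region argument once monotonicity and the preservation of the steady states $c_p$ are in place. I expect the real obstacle to be the $\mathrm{L}^1$ bound: one must both confine the non-conservative part of the scheme to the $N$ ghost cells — precisely the telescoping afforded by the discrete Rankine--Hugoniot identity — and then extract the sharp factor $\Dx$ rather than an $O(1)$ bound, which cannot come from the uniform spatial $\mathrm{TV}$ bound alone and genuinely needs the discrete space--time variation estimate together with the strict monotonicity $f_w\ge\alpha$.
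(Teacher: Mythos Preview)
The paper does not supply its own proof of this lemma: it is quoted verbatim as Lemma~5.1 of \cite{badwaik2020multilevel} and stated without argument, so there is no in-paper proof to compare your proposal against.

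That said, your approach is sound and tracks the structure one finds in the cited references. The $\mathrm{L}^\infty$ bound via the comparison principle and the discrete steady states $c_p$ is the standard invariant-region argument for this class of schemes. For the $\mathrm{L}^1$ bound, your decomposition --- comparing with the fully conservative auxiliary scheme $\tilde w$, using the discrete Rankine--Hugoniot identity to confine the discrepancy to the $N$ ghost cells, and controlling the accumulated defect through a discrete temporal total-variation estimate --- is precisely the mechanism employed in \cite{badwaik2020multilevel,BadwaikRuf2020}. The one place to be careful is that the discrete time-$\mathrm{TV}$ bound $\sum_n\abs{w_j^{n+1}-w_j^n}\le C\,\mathrm{TV}(\bar w)$ at interior cells is itself a nontrivial ingredient (it is established in \cite{BadwaikRuf2020} by a monotonicity/telescoping argument for the upwind scheme), so you are ultimately leaning on the same external lemmas the paper cites rather than giving a self-contained argument. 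Your treatment of the initial step $n=0$ via $\norm{\bar w}_{\mathrm{L}^\infty}\le\mathrm{TV}(\bar w)$ tacitly assumes $\bar w\in\mathrm{L}^1(\R)$; this is needed anyway for the $\mathrm{L}^1$ estimate to be meaningful, but is not explicitly among the hypotheses of \Cref{thm: existence of adopted entropy solutions}, so it should be stated.
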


\begin{theorem}[{\cite[Thm.~5.1]{BadwaikRuf2020}}]
    Let $f,k$, and $\bar{w}$ satisfy the assumptions of \Cref{thm: existence of adopted entropy solutions}.
    Let $S_t(\bar{w},k,f)$ denote the corresponding adapted entropy solution of~\eqref{eqn: conservation law discontinuous flux} and $S_t^\Dx(\bar{w},k,f)$ the numerical approximation given by~\eqref{eqn: FVM discontinuous flux}. Then we have the following convergence rate estimate
    \begin{equation}
        \norm{S_t (\bar{w},k,f) - S_t^\Dx(\bar{w},k,f)}_{\mathrm{L}^1(\R)} \leq C \Dx^{\hf}
    \label{eqn: Convergence rate discontinuous flux}
    \end{equation}
    for all $0\leq t\leq T$. Like in~\eqref{eqn: convergence rate}, the constant $C$ depends polynomially on $\mathrm{TV}(\bar{w}), \norm{f}_{\mathrm{Lip}}$ and in this case the number of discontinuities of $k$.
\end{theorem}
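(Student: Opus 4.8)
The plan is to run a Kuznetsov-type doubling-of-variables argument comparing the adapted entropy solution $u:=S_t(\bar w,k,f)$ with the numerical approximation $v:=S_t^\Dx(\bar w,k,f)$. The starting observation is structural: on each subdomain $D_i$ where $k$ is constant the scheme~\eqref{eqn: FVM discontinuous flux} is exactly the upwind (Godunov) method for the strictly increasing flux $f^{(i)}$, hence monotone under the CFL condition~\eqref{eqn: CFL condition}, while the ghost-cell update $w_{P_i}^{n+1}=(f^{(i)})^{-1}(f^{(i-1)}(w_{P_i-1}^{n+1}))$ enforces the discrete Rankine--Hugoniot identity $f^{(i)}(w_{P_i}^{n+1})=f^{(i-1)}(w_{P_i-1}^{n+1})$ across each coefficient discontinuity $\xi_i$.

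First I would assemble the $\Dx$-uniform a priori bounds on $v$: the $\mathrm{L}^1$ and $\mathrm{L}^\infty$ stability bounds recalled above, together with a uniform spatial $\mathrm{TV}$ bound obtained by propagating $\mathrm{TV}(w^n)\le C(\mathrm{TV}(\bar w)+\mathrm{TV}(k))$ through the scheme, and the resulting Lipschitz-in-time estimate $\norm{v(\cdot,t)-v(\cdot,s)}_{\mathrm{L}^1(\R)}\le C\abs{t-s}$. These confine both $u$ and $v$ to a fixed compact range, so that for every relevant $p$ the adapted state $c_p$ is bounded, and they supply the time regularity needed to close the Kuznetsov estimate; I would also record the initial error $\norm{\bar w-v(\cdot,0)}_{\mathrm{L}^1(\R)}\le\mathrm{TV}(\bar w)\,\Dx$ coming from cell averaging.

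Second, I would derive a discrete adapted entropy inequality. For any constant $c$, the upwind scheme on $D_i$ satisfies the classical Crandall--Majda cell entropy inequality $\abs{w_j^{n+1}-c}\le\abs{w_j^n-c}-\lambda(Q_{j+\hf}^n-Q_{j-\hf}^n)$ with numerical entropy flux $Q_{j+\hf}^n=f^{(i)}(w_j^n\vee c)-f^{(i)}(w_j^n\wedge c)$. Taking $c=c_p^{(i)}:=c_p|_{D_i}$, multiplying by a nonnegative test function, summing over $j$ and $n$, and performing summation by parts and Taylor expansion (using the uniform $\mathrm{BV}$ bounds) yields an approximate version of the inequality in \Cref{def: adapted entropy solution} with residual bounded by $C\Dx(\norm{\varphi_t}_{\mathrm{L}^1}+\norm{\varphi_x}_{\mathrm{L}^1})$. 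The crucial point is the interface analysis: since $c_p$ is defined by $f(k(x),c_p(x))\equiv p$, the would-be spatial source $\partial_x[f(k(x),c_p(x))]$ vanishes identically, and combining $f^{(i)}(c_p^{(i)})=p=f^{(i-1)}(c_p^{(i-1)})$ with the discrete Rankine--Hugoniot identity and the monotonicity of $f^{(i)}$ shows that the numerical adapted-entropy fluxes $\abs{f^{(i)}(\cdot)-p}$ match across each $\xi_i$, so the interface terms telescope cleanly.

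Finally, I would feed this approximate inequality for $v$ together with the exact adapted entropy inequality for $u$ into the Kuznetsov functional built on a mollifier $\omega_\eps(x-y)\,\theta_{\eps_0}(t-s)$, doubling both the spatial and temporal variables, splitting the $x$-integral over the subdomains $D_i$ (on each of which $f^{(i)}$ is Lipschitz, so the classical manipulations apply verbatim) and treating the finitely many interface contributions via the flux continuity just established. This produces
\[
\norm{u(\cdot,t)-v(\cdot,t)}_{\mathrm{L}^1(\R)}\le\norm{\bar w-v(\cdot,0)}_{\mathrm{L}^1(\R)}+C\Big(\eps+\eps_0+\frac{\Dx}{\eps}+\frac{\Dx}{\eps_0}\Big),
\]
and optimizing $\eps,\eps_0\sim\sqrt{\Dx}$ gives the claimed bound $C\Dx^{\hf}$, with $C$ polynomial in $\mathrm{TV}(\bar w)$, $\norm{f}_{\mathrm{Lip}}$, $1/\alpha$ (entering through $(f^{(i)})^{-1}$), and the number $N$ of discontinuities of $k$ (entering through $\mathrm{TV}(k)$). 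The step I expect to be the main obstacle is precisely the bookkeeping at the coefficient discontinuities inside the doubled-variable estimate: one must check that, after doubling both $x$ and $y$, the terms localized near each $\xi_i$ either cancel or are $O(\Dx/\eps)$, which hinges delicately on the Rankine--Hugoniot identity holding simultaneously for $u$, for $v$, and for the adapted state $c_p$, and on $f_w\ge\alpha>0$ to control the inverse maps and the trace behaviour at the interfaces.
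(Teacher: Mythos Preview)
The paper does not supply its own proof of this theorem; it is quoted verbatim as \cite[Thm.~5.1]{BadwaikRuf2020} and used as a black box to feed the abstract approximation framework of \Cref{sec: approximation}. So there is no ``paper's proof'' to compare against here.

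That said, your outline is essentially the strategy of the cited reference: establish $\Dx$-uniform $\mathrm{L}^\infty$, $\mathrm{L}^1$ and $\mathrm{BV}$ bounds for the scheme (the paper records the first two from \cite{badwaik2020multilevel}), derive a discrete analogue of the adapted entropy inequality of \Cref{def: adapted entropy solution} for the upwind update on each $D_i$, observe that the ghost-cell rule enforces continuity of both the physical flux and the adapted-entropy flux $\abs{f^{(i)}(\cdot)-p}$ across each $\xi_i$, and then run Kuznetsov's doubling argument. Your identification of the interface bookkeeping as the delicate step is accurate: in \cite{BadwaikRuf2020} this is handled precisely by the combination of $f^{(i)}(c_p^{(i)})=p$ for all $i$ (which kills the source term that a naive Kru\v{z}kov entropy would generate) and the discrete Rankine--Hugoniot relation built into the ghost cells, together with $f_w\ge\alpha$ to control $(f^{(i)})^{-1}$. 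The resulting error balance $\eps+\eps_0+\Dx/\eps+\Dx/\eps_0$ and the optimization $\eps,\eps_0\sim\sqrt{\Dx}$ are exactly as you wrote, and the polynomial dependence of $C$ on $\mathrm{TV}(\bar w)$, $\norm{f}_{\mathrm{Lip}}$, $1/\alpha$ and $N$ comes out of tracking constants through those steps.
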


\subsubsection{Bayesian inverse problems for scalar conservation laws with discontinuous flux}

We consider a given, fixed set of points
    $\xi_1 < \xi_2 < \ldots <\xi_{N-1}$
for $N\gg 1$ representing the possible points of discontinuity of the coefficient $k$.
We identify the space
\begin{equation*}
    V \coloneqq \{k\in\mathrm{L}^\infty(\R)\mid k \text{ is piecewise constant with discontinuities among the points }\xi_1,\ldots,\xi_{N-1} \}
 \end{equation*}
 (as a subspace of $\mathrm{L}^\infty(\R)$)
 with $(\R^N,\norm{\cdot}_\infty)$ by associating $k\in V$ with the vector $(k_i)_{i=1}^N\in\R^N$ representing the values of $k$ between neighboring points $\xi_i$ and $\xi_{i+1}$. We then consider the Bayesian inverse problem with $X = (\mathrm{L}^\infty\cap\mathrm{BV})(\R) \times \R^N$ and define the observation operator $\mathcal{G}\colon X\to\R^m$ by
 \begin{equation}
    (\mathcal{G}(\bar{w},k))_j = \int_0^T\int_\R \psi_j(x,t) g_j((S_t(\bar{w},k))(x))\diff x\diff t,\qquad j=1,\ldots,m,
    \label{eqn: observation operator discontinuous flux}
 \end{equation}
 for $\psi_j\in\mathrm{L}^1(\R^d\times(0,T))\cap\mathrm{L}^1(0,T;\mathrm{L}^\infty(\R^d))$ and $g_j\in\mathcal{C}^1(\R;\R^d)$ with $\norm{g_j}_{\mathcal{C}^1(\R;\R^d)}<\infty$. Note that here we keep the flux $f$ fixed since the assumption $f_w\geq \alpha>0$ is incompatible with a Banach space setting.
As before, we use the finite volume method~\eqref{eqn: FVM discontinuous flux} to define an approximation to $\mathcal{G}$ in the following way:
\begin{equation}
    \big(\mathcal{G}^\Dx(\bar{w},k)\big)_j = \int_0^T\int_\R\psi_j(x,t) g_j\big((S_t^\Dx(\bar{w},k))(x)\big)\diff x\diff t,\qquad j=1,\ldots,m.
    \label{eqn: approximation of observation operator discontinuous flux}
\end{equation}

\begin{lemma}
    The observation operator defined by~\eqref{eqn: observation operator discontinuous flux} satisfies \Cref{Assumption on G} and the approximation $\mathcal{G}^\Dx$ satisfies~\eqref{eqn: bound for the approximation error in G} with $\Psi(\Dx^{-1})=\sqrt{\Dx}$.
\end{lemma}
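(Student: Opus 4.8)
The plan is to follow the same two-step template used in the smooth-flux case (\Cref{lem: observation operator satisfies assumptions on G} and \Cref{lem: approximation satisfies assumption on approximation}), bearing in mind that here the flux $f$ is held fixed, that every admissible coefficient $k\in V$ has its jumps among the fixed finite set $\xi_1,\dots,\xi_{N-1}$ so that the number of discontinuities of $k$ is bounded by the constant $N-1$, and that the norm on $X=(\mathrm{L}^\infty\cap\mathrm{BV})(\R)\times\R^N$ controls $\norm{\bar w}_{\mathrm{L}^1(\R)}$, $\norm{\bar w}_{\mathrm{L}^\infty(\R)}$, $\mathrm{TV}(\bar w)$ and $\norm{k}_\infty$. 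As before it suffices to treat $m=1$. First I would verify \Cref{Assumption on G}~\eqref{item: Assumption on G - exponential bound}: writing $\abs{g(z)}\le\abs{g(0)}+\norm{g'}_\infty\abs{z}$ and invoking the $\mathrm{L}^\infty$ bound $\norm{S_t(\bar w,k,f)}_{\mathrm{L}^\infty(\R)}\le \tfrac{C_f}{\alpha}\norm{\bar w}_{\mathrm{L}^\infty(\R)}$ from \Cref{thm: existence of adopted entropy solutions} together with $\psi\in\mathrm{L}^1(\R^d\times(0,T))$ gives $\abs{\mathcal{G}(\bar w,k)}\le C_1\norm{\bar w}_{\mathrm{L}^\infty(\R)}+C_2$ with $C_1,C_2$ depending only on $\psi$, $g$, $C_f$, $\alpha$ (all fixed); the elementary inequalities $\ln(x)\le x$ and $\exp(ax)\le\exp(\eps x^2+a^2/\eps)$ then convert this into $\exp(\eps\norm{(\bar w,k)}_X^2+M(\eps))$, exactly as in \Cref{lem: observation operator satisfies assumptions on G}.

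Next I would verify \Cref{Assumption on G}~\eqref{item: Assumption on G - Lipshitz continuity in u}. For $(\bar w,k),(\hat w,l)\in X$ with norms below $r$ I apply the stability estimate~\eqref{eqn: stability estimate discontinuous flux} with $g=f$, so that the $\norm{f_w-g_w}_{\mathrm{L}^\infty}$ term drops out and one is left with $\norm{S_t(\bar w,k,f)-S_t(\hat w,l,f)}_{\mathrm{L}^1(\R)}\le\norm{\bar w-\hat w}_{\mathrm{L}^1(\R)}+C\norm{k-l}_{\mathrm{L}^\infty(\R)}$; the constant $C$ there depends only on the $\mathrm{L}^\infty$ and $\mathrm{TV}$ norms of $\bar w,\hat w$ (each $<r$), on the fixed Lipschitz constant of $f$, and on the number of discontinuities of $k,l$ (each $\le N-1$), hence $C=C(r)$. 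Bounding $\abs{g(S_t(\bar w,k))-g(S_t(\hat w,l))}\le\norm{g'}_\infty\abs{S_t(\bar w,k)-S_t(\hat w,l)}$ pointwise and pulling $\norm{\psi(\cdot,t)}_{\mathrm{L}^\infty(\R)}$ out of the $x$-integral yields $\abs{\mathcal{G}(\bar w,k)-\mathcal{G}(\hat w,l)}\le\norm{g'}_\infty\norm{\psi}_{\mathrm{L}^1(0,T;\mathrm{L}^\infty(\R))}\bigl(\norm{\bar w-\hat w}_{\mathrm{L}^1(\R)}+C(r)\norm{k-l}_{\mathrm{L}^\infty(\R)}\bigr)\le K(r)\norm{(\bar w,k)-(\hat w,l)}_X$. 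Together with \Cref{cor: finite data well-posedness} this proves well-posedness.

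Finally, for the approximation bound~\eqref{eqn: bound for the approximation error in G}, I would invoke the convergence rate estimate~\eqref{eqn: Convergence rate discontinuous flux}, namely $\norm{S_t(\bar w,k,f)-S_t^\Dx(\bar w,k,f)}_{\mathrm{L}^1(\R)}\le C\Dx^{\hf}$, whose constant $C$ depends polynomially on $\mathrm{TV}(\bar w)$, on $\norm{f}_{\mathrm{Lip}}$ (fixed) and on the number of discontinuities of $k$ ($\le N-1$, fixed), hence $C\le C(1+\norm{(\bar w,k)}_X)^q$ for some fixed exponent $q$. As in the previous step, pulling $\psi$ out and using $\norm{g'}_\infty<\infty$ gives $\abs{\mathcal{G}(\bar w,k)-\mathcal{G}^\Dx(\bar w,k)}\le\norm{g'}_\infty\norm{\psi}_{\mathrm{L}^1(0,T;\mathrm{L}^\infty(\R))}\,C(1+\norm{(\bar w,k)}_X)^q\sqrt{\Dx}$, and one more application of $\ln(x)\le x$ and $\exp(ax)\le\exp(\eps x^2+a^2/\eps)$ absorbs the polynomial factor into $K'(\eps)\exp(\eps\norm{(\bar w,k)}_X^2)$, so that~\eqref{eqn: bound for the approximation error in G} holds with $\Psi(\Dx^{-1})=\sqrt{\Dx}$; the conclusion then follows via \Cref{lem: Assumption 2 implies Assumption 1} and \Cref{cor: finite data approximation}.

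The only genuine obstacle is the bookkeeping of the constants: one must confirm that the data-dependence in~\eqref{eqn: stability estimate discontinuous flux} and~\eqref{eqn: Convergence rate discontinuous flux} is indeed as quoted, i.e.\ polynomial in quantities controlled by $\norm{(\bar w,k)}_X$, depending on the number of jumps only through the fixed bound $N-1$, and on $f$ only through fixed constants $C_f,\alpha,\norm{f}_{\mathrm{Lip}}$. Once this is pinned down, the rest is the verbatim analogue of the smooth-flux proofs and requires no new ideas.
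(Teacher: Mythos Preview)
Your proposal is correct and follows precisely the same approach as the paper, which simply states that ``in light of the stability estimate~\eqref{eqn: stability estimate discontinuous flux} and the convergence rate~\eqref{eqn: Convergence rate discontinuous flux}, the proof can be carried out in the same way, \emph{mutatis mutandis}, as the proofs of \Cref{lem: observation operator satisfies assumptions on G,lem: approximation satisfies assumption on approximation}.'' You have in fact gone further than the paper by spelling out the details---identifying which fixed constants ($C_f$, $\alpha$, $N-1$) absorb the $f$- and $k$-dependence, and tracking that the remaining dependence on the data is polynomial in quantities bounded by $\norm{(\bar w,k)}_X$---which is exactly the bookkeeping the paper leaves implicit.
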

\begin{proof}
    In light of the stability estimate~\eqref{eqn: stability estimate discontinuous flux} and the convergence rate~\eqref{eqn: Convergence rate discontinuous flux}, the proof can be carried out in the same way, \emph{mutatis mutandis}, as the proofs of \Cref{lem: observation operator satisfies assumptions on G,lem: approximation satisfies assumption on approximation}. 
\end{proof}




\section{Numerical experiments}\label{sec: numerical experiments}

In this section, we illustrate our theoretical results by presenting a series of numerical experiments. We employ a Metropolis--Hastings method to generate a Markov chain which samples from the posterior $\mu^y$. Such methods require a proposal kernel and here we choose the following standard random walk (see \cite{stuart2010inverse,cotter2013mcmc}):
\begin{itemize}
    \item Set $n=0$ and pick $u^{(0)}$.
    \item Propose $v^{(n)}=u^{(n)} + \beta \xi^{(n)}$ where $\xi^{(n)}\sim\mathcal{N}(0,\mathcal{C})$.
    \item Set $u^{(n+1)} = v^{(n)}$ with probability $a(u^{(n)},v^{(n)})$.
    \item Set $u^{(n+1)}=u^{(n)}$ otherwise.
    \item $n\to n+1$.
\end{itemize}
The underlying acceptance probability is defined as
\begin{equation*}
    a(u,v) = \min ( 1, \exp(I(u)-I(v)))
\end{equation*}
where
\begin{equation*}
    I(u) = \Phi(u) + \frac{1}{2}\norm{\mathcal{C}^{-\unitfrac{1}{2}}u}_X^2.
\end{equation*}
If we generate $\xi^{(n)}$ and the uniform random variable used in the accept-reject step independently of each other for each $n$ and independently of their values for different $n$ then this construction gives rise to a Markov chain $(u^{(n)})_{n=0}^\infty$ which is distributed according to $\mu^y$ given by \eqref{eqn: Radon--Nikodym derivative} \cite{stuart2010inverse}.

The algorithm has three scalar hyperparameters which need to be specified. First, the stepsize $\beta$ which controls the size of the move, second the burn-in $b$, i.e., the number of samples which are discarded in order to minimize the contribution of the initial value $u^{(0)}$, and the sample interval $\tau$ which is the number of states which are discarded between two observations.



 The best choices of hyperparameters, corresponding to short burn-in and smaller step-size in the steady-state can be achieved by letting $\beta$ vary with the step-count, i.e., $\beta = \beta (k)$. We chose a piecewise linear function for $\beta$, where in the beginning the steps are large and decrease linearly until a certain number of steps, after which it stays constant. Ultimately, the step-size is problem-dependent and has to be adjusted for each problem, for instance by a grid search. 
 

\subsection{Inverse problem for the shock location and amplitude in a Riemann problem for Burgers' equation}
In our first numerical experiment we consider Burgers' equation
\begin{gather*}
\begin{aligned}
    w_t + \Big(\frac{w^2}{2}\Big)_x = 0,& &&(x,t)\in(-1,1)\times(0,T),\\
    w(x,0)=\bar{w}(x),& &&x\in(-1,1),
\end{aligned}
\end{gather*}
with outflow boundary conditions.
Given numerical solutions at a specified time we want to infer the initial datum which we assume is of the form
\begin{equation*}
    \bar{w}^{(\delta_1,\delta_2,\sigma_0)}(x) = \begin{cases}
        1 + \delta_1, &  x<\sigma_0,\\
        \delta_2, & x>\sigma_0,
    \end{cases}
\end{equation*}
parameterized by $u=(\delta_1,\delta_2,\sigma_0)\in\R^3$. In order to infer the parameters $(\delta_1,\delta_2,\sigma_0)$ by observing (an approximation of) the solution $w$ at time $T=1$ we define the observation operator
\begin{equation*}
    \left(\mathcal{G}(\delta_1,\delta_2,\sigma_0))\right)_j = 10\int_{x_j - 0.05}^{x_j + 0.05} S_T^\Dx\left(\bar{w}^{(\delta_1,\delta_2,\sigma_0)}\right)\diff x, \qquad j=1,\ldots,5,
\end{equation*}
where $S_T^\Dx$ denotes the numerical solution operator and the measurement points are $(x_j)_{j=1}^5 = (-0.5,-0.25,0.35,0.5,0.65)$.
\begin{figure}[t]
\centering
\begin{tikzpicture}
  \begin{axis}[xscale=1.5,ymin=-0.2,ymax=1.2,xtick={-1,-0.5,0,0.5,1},xticklabels={$-1$,$-0.5$,$0$,$0.5$,$1$},ytick={0.,1},yticklabels={$0$,$1$}]
    \fill[green!60!black, opacity=0.5] (-0.45,-10) -- (-0.55,-10) -- (-0.55,10) -- (-0.45,10) -- (-0.45,-10);
    \fill[green!60!black, opacity=0.5] (-0.2,-10) -- (-0.3,-10) -- (-0.3,10) -- (-0.2,10) -- (-0.2,-10);
    \fill[green!60!black, opacity=0.5] (0.3,-10) -- (0.4,-10) -- (0.4,10) -- (0.3,10) -- (0.3,-10);
    \fill[green!60!black, opacity=0.5] (0.45,-10) -- (0.55,-10) -- (0.55,10) -- (0.45,10) -- (0.45,-10);
    \fill[green!60!black, opacity=0.5] (0.6,-10) -- (0.7,-10) -- (0.7,10) -- (0.6,10) -- (0.6,-10);
    \addplot[myblue, dotted, very thick,mark=none, const plot] table {Experiments/Experiment1_Burgers_RP/Burgers_prior_mean_ID.txt};
    \addplot[myblue, very thick,mark=none] table {Experiments/Experiment1_Burgers_RP/Burgers_prior_mean_h=128.txt};
    \addplot[scarletred1, dotted, very thick, mark=none] table {Experiments/Experiment1_Burgers_RP/Burgers_ground_truth_ID.txt};
    \addplot[scarletred1, very thick,mark=none] table {Experiments/Experiment1_Burgers_RP/Burgers_ground_truth_h=128.txt};
  \end{axis}
\end{tikzpicture}
\caption{\emph{Experiment 1:} Initial data (dotted lines) and numerical solutions (solid lines) for $(\delta_1^p,\delta_2^p,\sigma_0^p)=(0.1,-0.1,-0.1)$ (blue) corresponding to the prior mean and $(\delta_1^*,\delta_2^*,\sigma_0^*)=(0,0,0)$ (red) corresponding to the ground truth. The numerical solutions are calculated using the Rusanov scheme and the grid discretization parameter $\Dx=2/128$.}
\label{fig: Burgers experiment setup}
\end{figure}
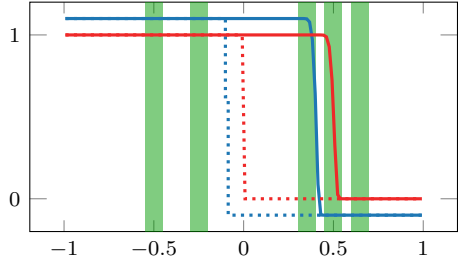

We consider observational noise $\eta \sim \mathcal{N}(0,\gamma^2 \mathcal{I}_5)$ with $\gamma = 0.05$ and prior $\mu_0 \sim \mathcal{N}(u^p,\mathcal{C})$ with mean $u^p=(\delta_1^p,\delta_2^p,\sigma_0^p)=(0.1,-0.1,-0.1)$ and covariance matrix $\mathcal{C} = \varphi^2 \mathcal{I}_3$, $\varphi=0.15$. The ground truth we want to recover is $u^* = (\delta_1^*,\delta_2^*,\sigma_0^*)=(0,0,0)$. \Cref{fig: Burgers experiment setup} shows the initial data corresponding to the prior mean and the ground truth as well as corresponding numerical solutions computed at time $T=1$. The measurement intervals used in the observation operator are highlighted in green.

As for the step size used in the Metropolis--Hastings method, we chose
\begin{equation}
    \beta(k) = \begin{cases}
        \beta_0 - \frac{\beta_0 - \beta_1}{k_b} k , & k \leq k_b, \\
        \beta_1, & k > k_b,
    \end{cases}
    \label{eq: beta}
\end{equation}
where $(\beta_0, \beta_1, k_b) = (0.05, 0.001, 250)$. This allowed
us to use $b = 500$ and $\tau = 20$, i.e., after discarding the first $500$ states use every $20$th state to approximate the posterior.
\begin{figure}[t]
\centering
\subfloat[$\delta_1$]{
\begin{tikzpicture}
    \begin{axis}[xmin=-0.3,xmax=0.3,ymin=0,ymax=15,cycle list name=mycycle,xtick={-0.25,0,0.25},xticklabels={$-0.25$,$0$,$0.25$}]
        \addplot+[ybar, bar width=0.0139,fill, opacity=0.6, draw=none] table{Experiments/Experiment1_Burgers_RP/hist_delta_1_h=128.tikz.txt};
        \addplot+[very thick, domain=-0.6:0.6, samples=100]{1/(0.15*sqrt{(2*pi)})*exp(-0.5*((x-0.1)/0.15)^2)};
        \addplot+[ybar, bar width=.4pt, fill] coordinates{
        (0, 100)
        };
    \end{axis}
\end{tikzpicture}
}
\subfloat[$\delta_2$]{
\begin{tikzpicture}
    \begin{axis}[xmin=-0.3,xmax=0.3,ymin=0,ymax=15,cycle list name=mycycle,xtick={-0.25,0,0.25},xticklabels={$-0.25$,$0$,$0.25$}]
        \addplot+[ybar, bar width=0.0139,fill, opacity=0.6, draw=none] table{Experiments/Experiment1_Burgers_RP/hist_delta_2_h=128.tikz.txt};
        \addplot+[very thick, domain=-0.6:0.6, samples=100]{1/(0.15*sqrt{(2*pi)})*exp(-0.5*((x+0.1)/0.15)^2)};
        \addplot+[ybar, bar width=.4pt, fill] coordinates{
        (0, 100)
        };
    \end{axis}
\end{tikzpicture}
}
\subfloat[$\sigma_0$]{
\begin{tikzpicture}
    \begin{axis}[xmin=-0.3,xmax=0.3,ymin=0,ymax=15,cycle list name=mycycle,xtick={-0.25,0,0.25},xticklabels={$-0.25$,$0$,$0.25$}]
        \addplot+[ybar, bar width=0.0139,fill, opacity=0.6, draw=none] table{Experiments/Experiment1_Burgers_RP/hist_sigma_0_h=128.tikz.txt};
        \addplot+[very thick, domain=-0.6:0.6, samples=100]{1/(0.15*sqrt{(2*pi)})*exp(-0.5*((x+0.1)/0.15)^2)};
        \addplot+[ybar, bar width=.4pt, fill] coordinates{
        (0, 100)
        };
    \end{axis}
\end{tikzpicture}
}
\caption{\emph{Experiment 1:} Histograms corresponding to the Metropolis-Hastings approximation of the posterior using a chain length of $2500$ and $\Dx=2/128$. The ground truth and the prior are shown in red and blue respectively.}
\label{exp1: histograms}
\end{figure}
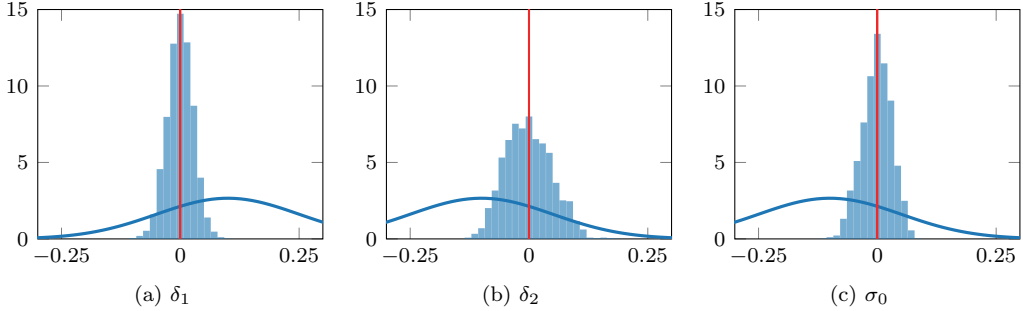

\Cref{exp1: histograms} shows the histograms of the approximated posterior computed by the Metropolis--Hastings method with a chain length of $2500$ and using $\Dx=2/128$ for the underlying finite volume method for the forward problem. The resulting posteriors all peak at the ground truth parameter values. The posteriors indicate the uncertainties inherent in estimating these parameters. The posterior of $\delta_2$ has the largest spread indicating comparatively slightly larger uncertainty in this parameter. This appears to be a consequence of the placement of the measurement intervals since only the rightmost measurement interval around the point $x_5=0.75$ contributes towards inferring the parameter $\delta_2$. The mean of the approximated posterior is $u_{\text{mean}}=(\delta_1,\delta_2,\sigma_0)\approx(-0.0004, -0.0010, -0.0012)$ and the \emph{maximum a posteriori} (MAP) estimator is $u_{\text{MAP}}=(\delta_1,\delta_2,\sigma_0)\approx(0.0136 -0.0195, -0.0037)$ both very close to the ground truth $u^*=(0,0,0)$.
%
%
%
%
%
%
%
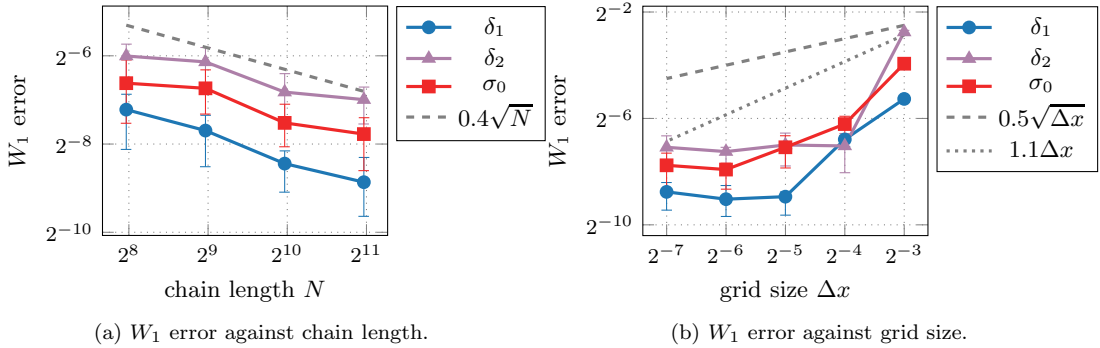
\begin{figure}[t]
\centering
\subfloat[$W_1$ error against chain length.]{
    \begin{tikzpicture}
        \begin{loglogaxis}[
    log basis x={2},
    xlabel={\small chain length $N$},
    log basis y={2},
    ylabel={\small $W_1$ error},
    grid=major,
    grid style={gray, dotted},
    legend pos= outer north east,
    legend style={font=\small},
    cycle list name=mycycle2,
]
            \addplot+[
                very thick,
                error bars/.cd,
                y explicit,
                y dir=both,
            ] table [
                x=steps,
                y=W1,
                y error plus expr=\thisrow{upper_quartile}-\thisrow{W1},
                y error minus expr=\thisrow{W1}-\thisrow{lower_quartile},
            ] {Experiments/Experiment1_Burgers_RP/W_1_for_different_chain_lengths_for_delta_1.tikz.txt};
            \addlegendentry{$\delta_1$}
            \addplot+[
                very thick,
                error bars/.cd,
                    y explicit,
                    y dir=both,
            ] table [
                x=steps,
                y=W1,
                y error plus expr=\thisrow{upper_quartile}-\thisrow{W1},
                y error minus expr=\thisrow{W1}-\thisrow{lower_quartile},
            ] {Experiments/Experiment1_Burgers_RP/W_1_for_different_chain_lengths_for_delta_2.tikz.txt};
            \addlegendentry{$\delta_2$}
        \addplot+[
                very thick,
                error bars/.cd,
                    y explicit,
                    y dir=both,
            ] table [
                x=steps,
                y=W1,
                y error plus expr=\thisrow{upper_quartile}-\thisrow{W1},
                y error minus expr=\thisrow{W1}-\thisrow{lower_quartile},
            ] {Experiments/Experiment1_Burgers_RP/W_1_for_different_chain_lengths_for_sigma_0.tikz.txt};
            \addlegendentry{$\sigma_0$}
            \addplot[very thick,gray, dashed, domain = 250:2000] {0.4/sqrt(x)};
            \addlegendentry{$0.4\sqrt{N}$}
        \end{loglogaxis}
    \end{tikzpicture}
    \hspace{-1.2em}
}
\subfloat[$W_1$ error against grid size.]{
    \begin{tikzpicture}
        \begin{loglogaxis}[
    log basis x={2},
    xlabel={\small grid size $\Dx$},
    log basis y={2},
    ylabel={\small $W_1$ error},
    grid=major,
    grid style={gray, dotted},
    legend pos= outer north east,
    legend style={font=\small},
    cycle list name=mycycle2,
]
            \addplot+[
                very thick,
                error bars/.cd,
                    y explicit,
                    y dir=both,
            ] table [
                x=cells,
                y=W1,
                y error plus expr=\thisrow{upper_quartile}-\thisrow{W1},
                y error minus expr=\thisrow{W1}-\thisrow{lower_quartile},
            ] {Experiments/Experiment1_Burgers_RP/W_1_for_different_grid_spacings_for_delta_1.tikz.txt};
            \addlegendentry{$\delta_1$}
            \addplot+[
                very thick,
                error bars/.cd,
                    y explicit,
                    y dir=both,
            ] table [
                x=cells,
                y=W1,
                y error plus expr=\thisrow{upper_quartile}-\thisrow{W1},
                y error minus expr=\thisrow{W1}-\thisrow{lower_quartile},
            ] {Experiments/Experiment1_Burgers_RP/W_1_for_different_grid_spacings_for_delta_2.tikz.txt};
            \addlegendentry{$\delta_2$}
        \addplot+[
                very thick,
                error bars/.cd,
                    y explicit,
                    y dir=both,
            ] table [
                x=cells,
                y=W1,
                y error plus expr=\thisrow{upper_quartile}-\thisrow{W1},
                y error minus expr=\thisrow{W1}-\thisrow{lower_quartile},
            ] {Experiments/Experiment1_Burgers_RP/W_1_for_different_grid_spacings_for_sigma_0.tikz.txt};
            \addlegendentry{$\sigma_0$}
        \addplot[very thick,gray, dashed, domain = 0.125:0.0078125] {0.5*sqrt(x)};
        \addlegendentry{$0.5\sqrt{\Dx}$}
        \addplot[very thick,gray, dotted, domain = 0.125:0.0078125] {1.1*x};
        \addlegendentry{$1.1\Dx$}
        \end{loglogaxis}
    \end{tikzpicture}
}
\caption{\emph{Experiment 1:} $W_1$ error as a function of the chain length with a fixed grid discretization parameter $\Dx=2/128$  (left) and as a function of the grid discretization parameter $\Dx$ for a fixed chain length ($N=2500$) (right)}
\label{fig: convergence rates}
\end{figure}

In \Cref{fig: convergence rates} we investigate the convergence of the approximated posterior measured in the $1$-Wasserstein distance with respect to the length of the chain as well as with respect to the grid discretization parameter $\Dx$ used in the finite volume method of the forward problem. Specifically, in \Cref{fig: convergence rates} (a) we consider chain lengths $250, 500, 1000,$ and $2000$ while keeping the grid discretization parameter $\Dx=2/128$ constant. On the other hand, in \Cref{fig: convergence rates} (b) we use $16, 32, 64, 128,$ and $256$ cells in the domain $(-1,1)$ while keeping the chain length $N=2500$ constant. We compute each Wasserstein error shown in \Cref{fig: convergence rates} as
\begin{equation*}
    \frac{1}{K}\sum_{k=1}^K W_1\left(U_k^{N,\Dx},U_{\text{Ref}}^{N^*,\Dx^*}\right)
\end{equation*}
where $\big(U_k^{N,\Dx}\big)_{k=1}^K$ is an ensemble of $K$ Markov chains all of length $N$ and using the same grid discretization parameter $\Dx$ and $U_{\text{Ref}}^{N^*,\Dx^*}$ is a reference solution. In the case of convergence with respect to the chain length we computed the reference solution $U_{\text{Ref}}^{N^*,\Dx^*}$ as an average of an ensemble of $K$ Markov chains using $N^*=4000$ and $\Dx^*=2/128$. For the convergence with respect to $\Dx$ we computed $U_{\text{Ref}}^{N^*,\Dx^*}$ again as an average of $K$ Markov chains using $N^*=2500$ and $\Dx^*=2/512$. In both experiments we used an ensembles of size $K=30$.

\Cref{fig: convergence rates} shows that both errors decrease at approximately the expected rate (for the grid size the expected rate is $\sqrt{\Dx}$, cf. \Cref{lem: approximation satisfies assumption on approximation} and \Cref{cor: finite data approximation}). It is clear from Figure \ref{fig: convergence rates} (right) that there is a saturation of convergence with respect to some finer grid sizes. This can be explained by the fact that the sampling error with respect to the chain length (see \Cref{fig: convergence rates} (left)) has already been reached and dominates the discretization error due to the numerical method.

\subsection{Inverse problem for the transport speed and jump amplitude for a Riemann problem with flux discontinuity}
In our second experiment we consider the conservation law with discontinuous flux
\begin{equation}
    w_t + (k(x) f(w) + (1-k(x)) g(w))_x =0
    \label{exp2: discontinuous flux}
\end{equation}
where $k$ is the Heaviside function and $g$ and $f$ are the Transport respectively Burgers flux, i.e.,
    $g^{(a)}(w) = a w,$ and $f(w) = \frac{w^2}{2}$.
Equation \eqref{exp2: discontinuous flux} corresponds to switching from the Transport equation to Burgers equation across the flux interface at $x=0$.
We use the initial datum
\begin{equation*}
    \bar{w}^{(\delta)}(x) = \begin{cases}
        0.5 + \delta,& x<-0.5,\\
        2,& x>-0.5,
    \end{cases}
\end{equation*}
on the domain $(-1,1)$ with outflow boundary conditions and our aim is to infer the left state of the Riemann initial datum, i.e., $\delta$, as well as the transport speed $a$ by observing the (numerical) solution at time $T=1$. Specifically, we consider the observation operator
\begin{equation*}
    \big(\mathcal{G}\big(\delta,a)\big)_j = 10\int_{x_j-0.075}^{x_j+0.075} S_T^\Dx\big(\bar{w}^{(\delta)},g^{(a)}\big) \diff x,\qquad j=1,\ldots,6,
\end{equation*}
where $S_T^\Dx$ is the numerical solution operator defined in~\eqref{eqn: FVM discontinuous flux} and the measurement points are $(x_j)_{j=1}^6= (-0.5,0.1,0.3,0.5,0.7,0.9)$.
We consider observational noise $\eta\sim\mathcal{N}(0,\gamma^2\mathcal{I}_6)$ with $\gamma =0.05$ and prior $\mu_0\sim(u^p,\mathcal{C})$ with mean $u^p=(\delta^p,a^p)=(0.1,0.9)$ and covariance matrix $\mathcal{C}=\varphi^2\mathcal{I}_2$, $\varphi=0.15$. The ground truth we want to recover is $(\delta^*,a^*)=(0,1)$.
\Cref{fig: experiment 3 setup} illustrates the initial data and numerical solutions corresponding to the prior mean and ground truth parameters.

\Cref{exp3: histograms} shows the histograms of the approximated posterior. Here, we used a chain length of $2500$ and $\Dx=128$ and $\lambda = 0.4$ in the finite volume approximation~\eqref{eqn: FVM discontinuous flux}. The mean of the approximated posterior is $u_{\text{mean}}=(\delta,a)\approx(-0.00732, 1.00119)$ and the MAP estimator us $u_{\text{MAP}}=(\delta,a)\approx(-0.00414, 1.00076)$.

\Cref{fig: convergence rates 2} again illustrates the convergence of the approximated posterior measured in the $1$-Wasserstein distance with respect to the length of the chain and with respect to the grid discretization parameter $\Dx$. We see that the observed order of convergence with respect to $\Dx$ in this experiment is strictly higher than the order $\sqrt{\Dx}$ which our theory guarantees. This observation is in line with the fact that the experimental order of convergence for finite volume methods is typically closer to one.
%
%
%
%
%
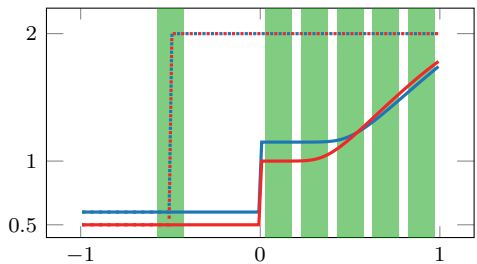
\begin{figure}[t]
\centering
\begin{tikzpicture}
  \begin{axis}[xscale=1.5,ymin=0.4,ymax=2.2,xtick={-1, 0, 1},xticklabels={$-1$,$0$,$1$},ytick={0.5,1,2},yticklabels={$0.5$,$1$,$2$}]
    \fill[green!60!black, opacity=0.5] (-0.575,-10) -- (-0.425,-10) -- (-0.425,10) -- (-0.575,10) -- (-0.575,-10);
    \fill[green!60!black, opacity=0.5] (0.025,-10) -- (0.175,-10) -- (0.175,10) -- (0.025,10) -- (0.025,-10);
    \fill[green!60!black, opacity=0.5] (0.225,-10) -- (0.375,-10) -- (0.375,10) -- (0.225,10) -- (0.225,-10);
    \fill[green!60!black, opacity=0.5] (0.425,-10) -- (0.575,-10) -- (0.575,10) -- (0.425,10) -- (0.425,-10);
    \fill[green!60!black, opacity=0.5] (0.625,-10) -- (0.775,-10) -- (0.775,10) -- (0.625,10) -- (0.625,-10);
    \fill[green!60!black, opacity=0.5] (0.825,-10) -- (0.975,-10) -- (0.975,10) -- (0.825,10) -- (0.825,-10);
    \addplot[myblue, dotted, very thick,mark=none] table {Experiments/Experiment3_Discontinuous_flux/Discontinuous_flux_prior_mean_ID.txt};
    \addplot[myblue, very thick,mark=none] table {Experiments/Experiment3_Discontinuous_flux/Discontinuous_flux_prior_mean.txt};
    \addplot[scarletred1, dotted, very thick, mark=none] table {Experiments/Experiment3_Discontinuous_flux/Discontinuous_flux_ID.txt};
    \addplot[scarletred1, very thick,mark=none] table {Experiments/Experiment3_Discontinuous_flux/Discontinuous_flux_ground_truth.txt};
  \end{axis}
\end{tikzpicture}
\caption{\emph{Experiment 2:} Initial data (dotted lines) and numerical solutions (solid lines) for $\delta=0.1$ and $a=0.9$ (blue) corresponding to the prior mean and $\delta=0$ and $a=1.$ (red) corresponding to the ground truth. The numerical solutions are calculated using the scheme~\eqref{eqn: FVM discontinuous flux} with $\lambda=0.4$ and grid discretization parameter $\Dx=2/128$.}
\label{fig: experiment 3 setup}
\end{figure}
\begin{figure}[t]
\centering
\subfloat[$\delta$]{
\begin{tikzpicture}
    \begin{axis}[xmin=-0.3,xmax=0.3,ymin=0,ymax=20,cycle list name=mycycle,xtick={-0.25,0,0.25},xticklabels={$-0.25$,$0$,$0.25$}]
        \addplot+[ybar, bar width=0.0139, fill,opacity=0.6, draw=none] table{Experiments/Experiment3_Discontinuous_flux/hist_discontinuousflux_delta_h=128.tikz.txt};
        \addplot+[very thick, domain=-0.6:0.6, samples=100]{1/(0.15*sqrt{(2*pi)})*exp(-0.5*((x-0.1)/0.15)^2)};
        \addplot+[ybar, bar width=.4pt, fill] coordinates{
        (0, 100)
        };
    \end{axis}
\end{tikzpicture}
}
\subfloat[$a$]{
\begin{tikzpicture}
    \begin{axis}[xmin=-0.3,xmax=0.3,ymin=0,ymax=20,cycle list name=mycycle,xtick={-0.25,0,0.25},xticklabels={$0.75$,$1$,$1.5$}]
        \addplot+[ybar, bar width=0.0139,fill, opacity=0.6, draw=none] table{Experiments/Experiment3_Discontinuous_flux/hist_discontinuousflux_a_h=128.tikz.txt};
        \addplot+[very thick, domain=-0.6:0.6, samples=100]{1/(0.15*sqrt{(2*pi)})*exp(-0.5*((x+0.1)/0.15)^2)};
        \addplot+[ybar, bar width=.4pt, fill] coordinates{
        (0, 100)
        };
    \end{axis}
\end{tikzpicture}
}
\caption{\emph{Experiment 2:} Histograms corresponding to the Metropolis-Hastings approximation of the posterior using a chain length of $2500$ and $\Dx=2/128$. The ground truth and the prior are shown in red and blue respectively.}
\label{exp3: histograms}
\end{figure}
%
%
%
%
%
%
%
%
%
%
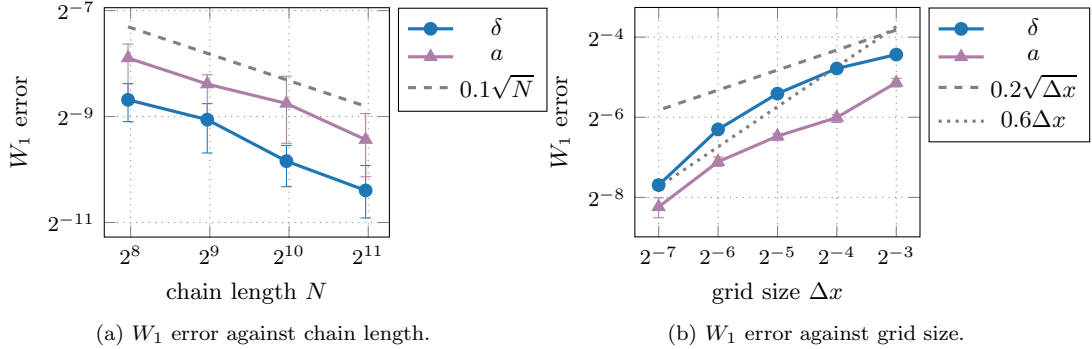
\begin{figure}[t]
\centering
\subfloat[$W_1$ error against chain length.]{
    \begin{tikzpicture}
        \begin{loglogaxis}[
    log basis x={2},
    xlabel={\small chain length $N$},
    log basis y={2},
    ylabel={\small $W_1$ error},
    grid=major,
    grid style={gray, dotted},
    legend pos= outer north east,
    legend style={font=\small},
    cycle list name=mycycle2,
]
            \addplot+[
                very thick,
                error bars/.cd,
                y explicit,
                y dir=both,
            ] table [
                x=steps,
                y=W1,
                y error plus expr=\thisrow{upper_quartile}-\thisrow{W1},
                y error minus expr=\thisrow{W1}-\thisrow{lower_quartile},
            ] {Experiments/Experiment3_Discontinuous_flux/W_1_for_different_chain_lengths_for_delta.tikz.txt};
            \addlegendentry{$\delta$}
            \addplot+[
                very thick,
                error bars/.cd,
                    y explicit,
                    y dir=both,
            ] table [
                x=steps,
                y=W1,
                y error plus expr=\thisrow{upper_quartile}-\thisrow{W1},
                y error minus expr=\thisrow{W1}-\thisrow{lower_quartile},
            ] {Experiments/Experiment3_Discontinuous_flux/W_1_for_different_chain_lengths_for_sigma.tikz.txt};
            \addlegendentry{$a$}
            \addplot[very thick,gray, dashed, domain = 250:2000] {0.1/sqrt(x)};
            \addlegendentry{$0.1\sqrt{N}$}
        \end{loglogaxis}
    \end{tikzpicture}
    \hspace{-1.2em}
}
\subfloat[$W_1$ error against grid size.]{
    \begin{tikzpicture}
        \begin{loglogaxis}[
    log basis x={2},
    xlabel={\small grid size $\Dx$},
    log basis y={2},
    ylabel={\small $W_1$ error},
    grid=major,
    grid style={gray, dotted},
    legend pos= outer north east,
    legend style={font=\small},
    cycle list name=mycycle2,
]
            \addplot+[
                very thick,
                error bars/.cd,
                    y explicit,
                    y dir=both,
            ] table [
                x=cells,
                y=W1,
                y error plus expr=\thisrow{upper_quartile}-\thisrow{W1},
                y error minus expr=\thisrow{W1}-\thisrow{lower_quartile},
            ] {Experiments/Experiment3_Discontinuous_flux/W_1_for_different_grid_spacings_for_delta.tikz.txt};
            \addlegendentry{$\delta$}
            \addplot+[
                very thick,
                error bars/.cd,
                    y explicit,
                    y dir=both,
            ] table [
                x=cells,
                y=W1,
                y error plus expr=\thisrow{upper_quartile}-\thisrow{W1},
                y error minus expr=\thisrow{W1}-\thisrow{lower_quartile},
            ] {Experiments/Experiment3_Discontinuous_flux/W_1_for_different_grid_spacings_for_sigma.tikz.txt};
            \addlegendentry{$a$}
        \addplot[very thick,gray, dashed, domain = 0.125:0.0078125] {0.2*sqrt(x)};
        \addlegendentry{$0.2\sqrt{\Dx}$}
        \addplot[very thick,gray, dotted, domain = 0.125:0.0078125] {0.6*x};
        \addlegendentry{$0.6\Dx$}
        \end{loglogaxis}
    \end{tikzpicture}
}
\caption{\emph{Experiment 2:} $W_1$ error as a function of the chain length with a fixed grid discretization parameter $\Dx=2/128$  (left) and as a function of the grid discretization parameter $\Dx$ for a fixed chain length ($N=2500$) (right)}
\label{fig: convergence rates 2}
\end{figure}

\subsection{An inverse problem for systems of conservation laws.}
While our theory does not cover systems of conservation laws, even in one space dimension, due to a lack of rigorous stability results in the literature, we demonstrate with the following numerical experiment that Bayesian inverse problems for systems of conservation laws, at least in one space dimension, might still be well-approximated with the MCMC type sampling algorithms presented here.

We consider the one-dimensional Euler equations
\begin{gather*}
    w_t + f(w)_x = 0\\
    w=\begin{pmatrix}
        \rho\\
        \rho v\\
        E
    \end{pmatrix},\qquad f(w)=\begin{pmatrix}
        \rho v\\
        \rho v^2+ p\\
        (E+p)v
    \end{pmatrix},
\end{gather*}
where the density $\rho$, velocity $v$ and energy $E$ are unknown and the pressure $p$ and the energy are related by the following equation of state:
\begin{equation*}
    E=\frac{p}{\gamma-1} + \frac{1}{2}\rho v^2,\qquad \text{for }\gamma=1.4.
\end{equation*}
We consider Sod's shock tube problem \cite{sod1978survey} on the domain $(0,1)$ with outflow boundary conditions and initial discontinuity at $x=0.5$. We want to infer the initial datum $\bar{w}^{(\delta_L,\gamma_L,\beta_L,\delta_R,\gamma_R,\beta_R)}$ which we assume takes the left and right states
\begin{equation*}
    \begin{pmatrix}
        \rho_L\\
        v_L\\
        p_L
    \end{pmatrix}
    = \begin{pmatrix}
        1 +\delta_L\\
        \gamma_L\\
        1 +\beta_L
    \end{pmatrix}\qquad\text{and}\qquad
    \begin{pmatrix}
        \rho_R\\
        v_R\\
        p_R
    \end{pmatrix}
    = \begin{pmatrix}
        0.125 + \delta_R\\
        \gamma_R\\
        0.1 + \beta_R
    \end{pmatrix}
\end{equation*}
to the left respectively to the right of the initial discontinuity $x=0.5$. To that end we consider the observation operator
\begin{equation*}
    \left(\mathcal{G}(\delta_L,\gamma_L,\beta_L,\delta_R,\gamma_R,\beta_R)\right)_j = 10 \int_{x_j-0.05}^{x_j+0.05} S_T^\Dx\left(\bar{w}^{(\delta_L,\gamma_L,\beta_L,\delta_R,\gamma_R,\beta_R)})\right)\diff x,\qquad j=1,\ldots,5,
\end{equation*}
at time $T=0.2$ and for the measurement points $(x_j)_{j=1}^5=(0.1,0.25,0.5,0.75,0.9)$. Here $S_T^\Dx$ is a numerical solution operator and in the subsequent experiment we will employ the HLLC method.

We consider observational noise $\eta\sim\mathcal{N}(0,\gamma^2\mathcal{I}_{15})$ with $\gamma=0.05$ and prior $\mu_0\sim\mathcal{N}(u^p,\mathcal{C})$ with mean $u^p=(\delta_L^p,\gamma_L^p,\beta_L^p,\delta_R^p,\gamma_R^p,\beta_R^p)=(-0.1,0.1,-0.1,0.1,0,1,0.1)$ and covariance matrix $\mathcal{C}=\varphi^2\mathcal{I}_6$ with $\varphi=0.15$. The ground truth we want to recover is $u^*=(\delta_L^*,\gamma_L^*,\beta_L^*,\delta_R^*,\gamma_R^*,\beta_R^*)=(0,0,0,0,0,0)$. \Cref{fig: Sod shock tube experiment setup} shows the initial data corresponding to the prior mean and the ground truth as well as corresponding numerical solutions. We chose the constant step size $\beta=0.0005$ and burn-in $b=500$ and sample interval $\tau=10$. The histograms of the approximated posteriors are shown in \Cref{exp4: histograms} and we observe overall good approximation with the only possible exception of the right state of the velocity. Here, we used a chain length of $1500$ and $\Dx=1/128$. The means of the approximated posterior are $u_{\text{mean}}\approx (0.0078, -0.0064, 0.0084, 0.02, 0.046,0.012)$ and the MAP estimators are $u_{\text{MAP}}\approx(0.0092, 0.012, 0.012, 0.014, 0.013, 0.0011)$ both very close to zero.
%
%
%
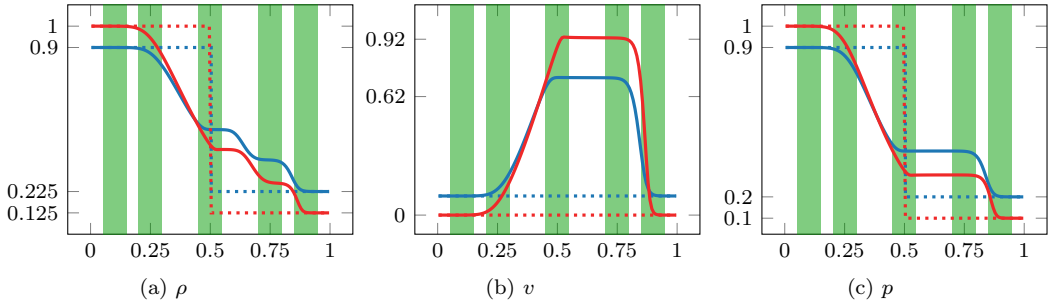
\begin{figure}[t]
\centering
\subfloat[$\rho$]{
\begin{tikzpicture}
  \begin{axis}[ymin=0.025,ymax=1.1,xtick={0, 0.25, 0.5, 0.75,1},xticklabels={$0$, $0.25$, $0.5$, $0.75$, $1$},ytick={0.125,0.225,0.9,1},yticklabels={$0.125$,$0.225$,$0.9$,$1$}]
    \fill[green!60!black, opacity=0.5] (0.05,-10) -- (0.15,-10) -- (0.15,10) -- (0.05,10) -- (0.05,-10);
    \fill[green!60!black, opacity=0.5] (0.2,-10) -- (0.3,-10) -- (0.3,10) -- (0.2,10) -- (0.2,-10);
    \fill[green!60!black, opacity=0.5] (0.45,-10) -- (0.55,-10) -- (0.55,10) -- (0.45,10) -- (0.45,-10);
    \fill[green!60!black, opacity=0.5] (0.7,-10) -- (0.8,-10) -- (0.8,10) -- (0.7,10) -- (0.7,-10);
    \fill[green!60!black, opacity=0.5] (0.85,-10) -- (0.95,-10) -- (0.95,10) -- (0.85,10) -- (0.85,-10);
    \addplot[myblue, dotted, very thick,mark=none, const plot] table {Experiments/Experiment4_Sod_shock_tube/Sod_full_pm_IC_rho.tikz.txt};
    \addplot[myblue, very thick,mark=none] table {Experiments/Experiment4_Sod_shock_tube/Sod_full_pm_end_rho.tikz.txt};
    \addplot[scarletred1, dotted, very thick, mark=none] table {Experiments/Experiment4_Sod_shock_tube/Sod_full_gt_IC_rho.tikz.txt};
    \addplot[scarletred1, very thick,mark=none] table {Experiments/Experiment4_Sod_shock_tube/Sod_full_gt_end_rho.tikz.txt};
  \end{axis}
\end{tikzpicture}
\hspace{-1.3em}
}
\subfloat[$v$]{
\begin{tikzpicture}
  \begin{axis}[ymin=-0.1,ymax=1.1,xtick={0, 0.25, 0.5, 0.75, 1},xticklabels={$0$, $0.25$, $0.5$, $0.75$,$1$},ytick={0,0.62,0.92},yticklabels={$0$,$\phantom{1}0.62$,$0.92$}]
    \fill[green!60!black, opacity=0.5] (0.05,-10) -- (0.15,-10) -- (0.15,10) -- (0.05,10) -- (0.05,-10);
    \fill[green!60!black, opacity=0.5] (0.2,-10) -- (0.3,-10) -- (0.3,10) -- (0.2,10) -- (0.2,-10);
    \fill[green!60!black, opacity=0.5] (0.45,-10) -- (0.55,-10) -- (0.55,10) -- (0.45,10) -- (0.45,-10);
    \fill[green!60!black, opacity=0.5] (0.7,-10) -- (0.8,-10) -- (0.8,10) -- (0.7,10) -- (0.7,-10);
    \fill[green!60!black, opacity=0.5] (0.85,-10) -- (0.95,-10) -- (0.95,10) -- (0.85,10) -- (0.85,-10);
    \addplot[myblue, dotted, very thick,mark=none, const plot] table {Experiments/Experiment4_Sod_shock_tube/Sod_full_pm_IC_v.tikz.txt};
    \addplot[myblue, very thick,mark=none] table {Experiments/Experiment4_Sod_shock_tube/Sod_full_pm_end_v.tikz.txt};
    \addplot[scarletred1, dotted, very thick, mark=none] table {Experiments/Experiment4_Sod_shock_tube/Sod_full_gt_IC_v.tikz.txt};
    \addplot[scarletred1, very thick,mark=none] table {Experiments/Experiment4_Sod_shock_tube/Sod_full_gt_end_v.tikz.txt};
  \end{axis}
\end{tikzpicture}
\hspace{-1.3em}
}
\subfloat[$p$]{
\begin{tikzpicture}
  \begin{axis}[ymin=0.025,ymax=1.1,xtick={0, 0.25, 0.5, 0.75,1},xticklabels={$0$, $0.25$, $0.5$, $0.75$,$1$},ytick={0.1,0.2,0.9,1},yticklabels={$0.1$,$\phantom{15}0.2$,$0.9$,$1$}]
    \fill[green!60!black, opacity=0.5] (0.05,-10) -- (0.15,-10) -- (0.15,10) -- (0.05,10) -- (0.05,-10);
    \fill[green!60!black, opacity=0.5] (0.2,-10) -- (0.3,-10) -- (0.3,10) -- (0.2,10) -- (0.2,-10);
    \fill[green!60!black, opacity=0.5] (0.45,-10) -- (0.55,-10) -- (0.55,10) -- (0.45,10) -- (0.45,-10);
    \fill[green!60!black, opacity=0.5] (0.7,-10) -- (0.8,-10) -- (0.8,10) -- (0.7,10) -- (0.7,-10);
    \fill[green!60!black, opacity=0.5] (0.85,-10) -- (0.95,-10) -- (0.95,10) -- (0.85,10) -- (0.85,-10);
    \addplot[myblue, dotted, very thick,mark=none, const plot] table {Experiments/Experiment4_Sod_shock_tube/Sod_full_pm_IC_p.tikz.txt};
    \addplot[myblue, very thick,mark=none] table {Experiments/Experiment4_Sod_shock_tube/Sod_full_pm_end_p.tikz.txt};
    \addplot[scarletred1, dotted, very thick, mark=none] table {Experiments/Experiment4_Sod_shock_tube/Sod_full_gt_IC_p.tikz.txt};
    \addplot[scarletred1, very thick,mark=none] table {Experiments/Experiment4_Sod_shock_tube/Sod_full_gt_end_p.tikz.txt};
  \end{axis}
\end{tikzpicture}
}
\caption{\emph{Experiment 3:} Initial data (dotted lines) and numerical solutions (solid lines) for $u^p=(\delta_L,\gamma_L,\beta_L,\delta_R,\gamma_R,\beta_R)=(-0.1,0.1,-0.1,0.1,0,1,0.1)$ (blue) corresponding to the prior mean and $u^*=(\delta_L^*,\gamma_L^*,\beta_L^*,\delta_R^*,\gamma_R^*,\beta_R^*)=(0,0,0,0,0,0)$ (red) corresponding to the ground truth. The numerical solutions are calculated using the HLLC scheme and grid discretization parameter $\Dx=1/128$.}
\label{fig: Sod shock tube experiment setup}
\end{figure}
\begin{figure}[t]
\centering
\subfloat[$\rho_L$]{
\begin{tikzpicture}
    \begin{axis}[xmin=-0.3,xmax=0.3,ymin=0,ymax=20,cycle list name=mycycle,xtick={-0.25,0,0.25},xticklabels={$-0.25$,$0$,$0.25$}]
        \addplot+[ybar, bar width=0.0139, fill,opacity=0.6, draw=none] table {Experiments/Experiment4_Sod_shock_tube/Sod_full_post_rho_l_h=128_N=1500.tikz.txt};
        \addplot+[very thick, domain=-0.6:0.6, samples=100]{1/(0.15*sqrt{(2*pi)})*exp(-0.5*((x+0.1)/0.15)^2)};
        \addplot+[ybar, bar width=.4pt, fill] coordinates{
        (0, 100)
        };
    \end{axis}
\end{tikzpicture}
}
\subfloat[$v_L$]{
\begin{tikzpicture}
    \begin{axis}[xmin=-0.3,xmax=0.3,ymin=0,ymax=20,cycle list name=mycycle,xtick={-0.25,0,0.25},xticklabels={$-0.25$,$0$,$0.25$}]
        \addplot+[ybar, bar width=0.0139, fill,opacity=0.6, draw=none] table {Experiments/Experiment4_Sod_shock_tube/Sod_full_post_v_l_h=128_N=1500.tikz.txt};
        \addplot+[very thick, domain=-0.6:0.6, samples=100]{1/(0.15*sqrt{(2*pi)})*exp(-0.5*((x-0.1)/0.15)^2)};
        \addplot+[ybar, bar width=.4pt, fill] coordinates{
        (0, 100)
        };
    \end{axis}
\end{tikzpicture}
}
\subfloat[$p_L$]{
\begin{tikzpicture}
    \begin{axis}[xmin=-0.3,xmax=0.3,ymin=0,ymax=20,cycle list name=mycycle,xtick={-0.25,0,0.25},xticklabels={$-0.25$,$0$,$0.25$}]
        \addplot+[ybar, bar width=0.0139, fill,opacity=0.6, draw=none] table {Experiments/Experiment4_Sod_shock_tube/Sod_full_post_p_l_h=128_N=1500.tikz.txt};
        \addplot+[very thick, domain=-0.6:0.6, samples=100]{1/(0.15*sqrt{(2*pi)})*exp(-0.5*((x+0.1)/0.15)^2)};
        \addplot+[ybar, bar width=.4pt, fill] coordinates{
        (0, 100)
        };
    \end{axis}
\end{tikzpicture}
}
\\
\subfloat[$\rho_R$]{
\begin{tikzpicture}
    \begin{axis}[xmin=-0.3,xmax=0.3,ymin=0,ymax=20,cycle list name=mycycle,xtick={-0.25,0,0.25},xticklabels={$-0.25$,$0$,$0.25$}]
        \addplot+[ybar, bar width=0.0139, fill,opacity=0.6, draw=none] table {Experiments/Experiment4_Sod_shock_tube/Sod_full_post_rho_r_h=128_N=1500.tikz.txt};
        \addplot+[very thick, domain=-0.6:0.6, samples=100]{1/(0.15*sqrt{(2*pi)})*exp(-0.5*((x-0.1)/0.15)^2)};
        \addplot+[ybar, bar width=.4pt, fill] coordinates{
        (0, 100)
        };
    \end{axis}
\end{tikzpicture}
}
\subfloat[$v_R$]{
\begin{tikzpicture}
    \begin{axis}[xmin=-0.3,xmax=0.3,ymin=0,ymax=20,cycle list name=mycycle,xtick={-0.25,0,0.25},xticklabels={$-0.25$,$0$,$0.25$}]
        \addplot+[ybar, bar width=0.0139, fill,opacity=0.6, draw=none] table {Experiments/Experiment4_Sod_shock_tube/Sod_full_post_v_r_h=128_N=1500.tikz.txt};
        \addplot+[very thick, domain=-0.6:0.6, samples=100]{1/(0.15*sqrt{(2*pi)})*exp(-0.5*((x-0.1)/0.15)^2)};
        \addplot+[ybar, bar width=.4pt, fill] coordinates{
        (0, 100)
        };
    \end{axis}
\end{tikzpicture}
}
\subfloat[$p_R$]{
\begin{tikzpicture}
    \begin{axis}[xmin=-0.3,xmax=0.3,ymin=0,ymax=20,cycle list name=mycycle,xtick={-0.25,0,0.25},xticklabels={$-0.25$,$0$,$0.25$}]
        \addplot+[ybar, bar width=0.0139, fill,opacity=0.6, draw=none] table {Experiments/Experiment4_Sod_shock_tube/Sod_full_post_p_r_h=128_N=1500.tikz.txt};
        \addplot+[very thick, domain=-0.6:0.6, samples=100]{1/(0.15*sqrt{(2*pi)})*exp(-0.5*((x-0.1)/0.15)^2)};
        \addplot+[ybar, bar width=.4pt, fill] coordinates{
        (0, 100)
        };
    \end{axis}
\end{tikzpicture}
}
\caption{\emph{Experiment 3:} Histograms corresponding to the Metropolis-Hastings approximation of the posterior using a chain length of $1500$ and $\Dx=1/128$. The ground truth and the prior are shown in red and blue respectively.}
\label{exp4: histograms}
\end{figure}
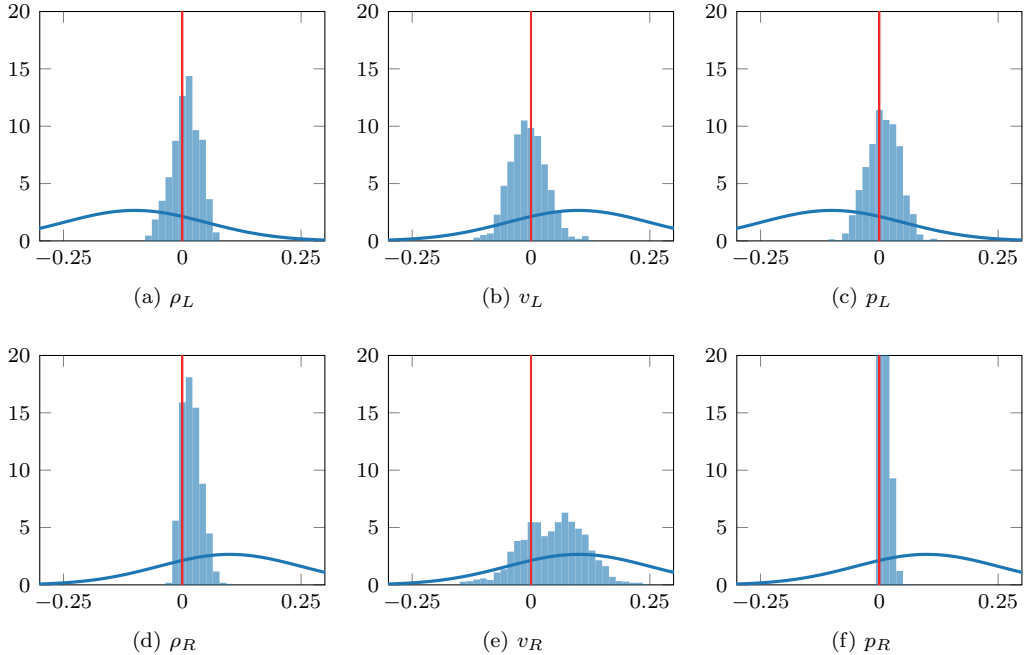
\section{Summary}
We studied the well-posedness of the Bayesian inverse problem for scalar hyperbolic conservation laws in this paper. To this end, we started with the abstract framework for well-posedness that was formalized in \cite{stuart2010inverse}. In contrast to \cite{stuart2010inverse}, we investigated Lipschitz continuity of the measurement to posterior map with respect to the Wasserstein metric. This allows us to more effectively control important statistical moments, such as the means of the posteriors. Moreover, the Lipschitz continuity of \emph{approximate posteriors}, with respect to variations in the approximation parameter, was also derived, allowing us to establish convergence rates with respect to spatio-temporal numerical approximations of the underlying forward map. 

These abstract results were verified for scalar conservation laws, in the context of a Bayesian inverse problem corresponding to inferring the initial datum and flux functions, from noisy measurements of the observables of entropy solutions. Moreover, we also demonstrated the well-posedness of the Bayesian inverse problems for conservation laws with a flux function, that is possibly discontinuous in the space variable. In both cases, explicit stability estimates were obtained for the variation of the posterior in the Wasserstein distance, with respect to measurement perturbations or approximations. 

Finally, we illustrated the theoretical results with numerical experiments, where we verified the convergence rates for the posterior with respect to the spatio-temporal discretization. Our theory and experiments illustrated the fact that the Bayesian inverse problem is both well-posed and can be approximated quite well numerically, even for these nonlinear hyperbolic PDEs with discontinuous solutions. 

Our focus in this paper was on scalar conservation laws as the forward map, in this case, is well-posed and is Lipschitz continuous with respect to the data and to approximations. Extending these results to hyperbolic systems of conservation laws is very challenging. In one space dimension, it is well known that entropy solutions exist and are unique, at least for initial data with small total variation. However, the lack of stability results, particularly with respect to fluxes, inhibits the direct application of our theory in this case. Nevertheless, we presented a numerical experiment to show that the Bayesian inverse problem is computable. However, for systems of conservation laws in several space dimensions, the forward map might not even be globally defined. The well-posedness of the Bayesian inverse problem for such \emph{ill-posed} PDEs is discussed in the recent paper \cite{LMW1}.

\appendix
\section{Appendix}
    \begin{theorem}[{Fernique Theorem \cite[Thm.~2.7]{da2014stochastic}}]\label{thm: Fernique}
    If $\mu=\mathcal{N}(0,\mathcal{C})$ is a Gaussian measure on some Banach space $X$, so that $\mu(X)=1$, then there exists $\alpha>0$ such that
    \begin{equation*}
        \int_X \exp\left(\alpha\norm{x}_X^2\right)\mu(\diff x)<\infty.
    \end{equation*}
\end{theorem}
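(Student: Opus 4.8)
The plan is to follow Fernique's classical argument, whose engine is the rotation invariance of the product measure $\mu\otimes\mu$ on $X\times X$. First I would record this invariance: writing a generic element of $(X\times X)^*$ as $(x,y)\mapsto\ell_1(x)+\ell_2(y)$ with $\ell_1,\ell_2\in X^*$ and composing it with the linear map $T(x,y)=\bigl(\tfrac{1}{\sqrt2}(x-y),\tfrac{1}{\sqrt2}(x+y)\bigr)$ produces $\tfrac{1}{\sqrt2}\bigl[(\ell_1+\ell_2)(x)+(\ell_2-\ell_1)(y)\bigr]$. Since under $\mu\otimes\mu$ the variables $x$ and $y$ are i.i.d.\ with law $\mu$, this quantity has the same (centered) Gaussian law as $\ell_1(x)+\ell_2(y)$, because its variance $\tfrac12\bigl(\mathrm{Var}_\mu(\ell_1+\ell_2)+\mathrm{Var}_\mu(\ell_2-\ell_1)\bigr)$ equals $\mathrm{Var}_\mu(\ell_1)+\mathrm{Var}_\mu(\ell_2)$. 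As a Borel probability measure on a separable Banach space is determined by its characteristic functional, it follows that $T$ preserves $\mu\otimes\mu$; note also that the functional $x\mapsto\norm{x}_X$ appearing below is continuous, hence Borel measurable.

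Next I would extract the recursive tail estimate. Fix $0<s<t$ and apply the measure-preserving map $T$ to the set $B=\{\norm{u}_X\le s\}\times\{\norm{v}_X>t\}$; its preimage is $\{(x,y):\norm{x-y}_X\le\sqrt2\,s,\ \norm{x+y}_X>\sqrt2\,t\}$, and the elementary inequality $2\norm{x}_X\ge\norm{x+y}_X-\norm{x-y}_X$ (and its analogue with $x$ replaced by $y$) shows that on this preimage $\norm{x}_X>\tfrac{t-s}{\sqrt2}$ and $\norm{y}_X>\tfrac{t-s}{\sqrt2}$. Hence
\begin{equation*}
\mu\bigl(\{\norm{x}_X\le s\}\bigr)\,\mu\bigl(\{\norm{x}_X>t\}\bigr)=(\mu\otimes\mu)(T^{-1}B)\le\mu\Bigl(\bigl\{\norm{x}_X>\tfrac{t-s}{\sqrt2}\bigr\}\Bigr)^{2}.
\end{equation*}
Since $\mu(X)=1$, norm-balls exhaust $X$, so I may choose $s_0$ with $\mu(\{\norm{x}_X\le s_0\})\ge\tfrac34$, set $q:=\mu(\{\norm{x}_X>s_0\})/\mu(\{\norm{x}_X\le s_0\})\le\tfrac13<1$, and define $t_0=s_0$, $t_{n+1}=s_0+\sqrt2\,t_n$. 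Applying the displayed inequality with $t=t_{n+1}$, so that $\tfrac{t_{n+1}-s_0}{\sqrt2}=t_n$, and dividing by $\mu(\{\norm{x}_X\le s_0\})^2$ gives $a_{n+1}\le a_n^2$ for $a_n:=\mu(\{\norm{x}_X>t_n\})/\mu(\{\norm{x}_X\le s_0\})$; since $a_0=q$ this yields $a_n\le q^{2^n}$, while solving the affine recursion for $t_n$ gives $t_n\le C(\sqrt2)^n$ with $C=C(s_0)$, hence $t_n^2\le C^2 2^n$.

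Finally I would integrate over the shells $\{t_{n-1}<\norm{x}_X\le t_n\}$:
\begin{equation*}
\int_X\exp\bigl(\alpha\norm{x}_X^2\bigr)\,\mu(\diff x)\le e^{\alpha t_0^2}+\sum_{n\ge1}e^{\alpha t_n^2}\,\mu\bigl(\{\norm{x}_X>t_{n-1}\}\bigr)\le e^{\alpha t_0^2}+\sum_{n\ge1}\exp\bigl(2^{n-1}(2\alpha C^2+\ln q)\bigr),
\end{equation*}
and since $\ln q<0$ this series converges for every $\alpha$ with $0<\alpha<-\ln q/(2C^2)$, which proves the claim. The one genuinely non-routine ingredient is the rotation invariance of $\mu\otimes\mu$, together with the care needed to treat the non-smooth, infinite-dimensional functional $\norm{\cdot}_X$ at the level of Borel measures; once that is secured, the remainder is the deterministic ``bootstrap'' of a Gaussian tail bound and uses only elementary estimates.
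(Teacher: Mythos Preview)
Your proof is correct and follows Fernique's classical argument faithfully: the rotation invariance of $\mu\otimes\mu$ under the map $T$, the resulting quadratic recursion on tail probabilities, and the shell-wise summation are all carried out accurately. The only mild implicit assumption you add is separability of $X$ (needed for the characteristic-functional identification of the pushforward), but this is consistent with how the result is used throughout the paper, where $X$ is always taken separable.

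As for comparison: the paper does \emph{not} prove this theorem. It is quoted in the appendix as a known result from \cite[Thm.~2.7]{da2014stochastic} and used as a black box (the only argument given there is the short corollary that all polynomial moments are finite). So you have supplied a complete proof where the paper supplies none; your approach is exactly the standard one found in the cited reference.
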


The Fernique Theorem implies in particular that all moments of $u$ under Gaussian measures $\mu$ are finite as can be seen in the following way:
Since $\norm{u}_X^p = \exp(p \ln\norm{u}_X) \leq \exp(p\norm{u}_X)$ we find
\begin{align*}
    \int_X \norm{u}_X^p \diff\mu(u) &\leq \int_X \exp(p\norm{u}_X)\diff\mu(u)\\
    &= \int_X \left(\exp(p\norm{u}_X)\chi_{\left\{\norm{u}_X\geq\frac{p}{\alpha}\right\}}  + \exp(p\norm{u}_X)\chi_{\left\{\norm{u}_X<\frac{p}{\alpha}\right\}} \right)\diff\mu(u)\\
    &\leq \int_X \exp\left(\alpha\norm{u}_X^2\right)\diff\mu(u) + \exp\left(\frac{p^2}{\alpha}\right)\mu\left( \left\{\norm{u}_X<\frac{p}{\alpha}\right\} \right)\\
    &<\infty.
\end{align*}



\end{document}